\documentclass[11pt,reqno]{amsart}
\usepackage{amsmath}
\usepackage{amssymb}
\usepackage{mathrsfs}
\usepackage{mhequ}
\usepackage{color}
\usepackage{graphicx}
\usepackage{enumerate}
\usepackage{todonotes}
\usepackage[unicode=true,
bookmarks=true,bookmarksnumbered=false,bookmarksopen=false,
breaklinks=false,pdfborder={0 0 1},backref=false,colorlinks=true]{hyperref}
\usepackage{bbm}
\usepackage[small]{caption}
\usepackage{hyphenat}
\usepackage{enumitem}
\usepackage{tikz}
\usetikzlibrary{arrows.meta, calc, fit}
\usepackage{float}

\newtheorem{maintheorem}{Theorem}

\newtheorem{theorem}{Theorem}[section] 
\newtheorem{lemma}[theorem]{Lemma}   
\newtheorem{corollary}[theorem]{Corollary}
\newtheorem{proposition}[theorem]{Proposition}
\newtheorem{definition}[theorem]{Definition}
\newtheorem{remark}[theorem]{Remark}
\newtheorem{example}[theorem]{Example}

\newcommand{\N} {\mathbb N}

\begin{document}
	\title[On Foliated Optimal Transport]%
	{On Foliated Optimal Transport} 
	
	\author[D. S. Ledesma]{Diego S. Ledesma}
	\address{Diego Sebastian Ledesma\\
		Institute of Mathematics, Department of Mathematics\\
		Universidade Estadual de Campinas\\
		13.083-859 Campinas - SP\\
		Brazil.
	}
	\email{ledesma@ime.unicamp.br}
	
	\author[R. Possobon]{Renata Possobon}
	\address{Renata Possobon\\
		Institute of Mathematics, Department of Mathematics\\
		Universidade Estadual de Campinas\\
		13.083-859 Campinas - SP\\
		Brazil.
	}
	\email{possobon@ime.unicamp.br}

	\author[C.S. Rodrigues]{Christian S. Rodrigues}
	\address{Christian S. Rodrigues\\
		Institute of Mathematics, Department of Applied Mathematics\\
		Universidade Estadual de Campinas \\
		13.083-859 Campinas - SP\\
		Brazil \\
		and Max-Planck-Institute for Mathematics in the Sciences\\
		Inselstr. 22\\
		04103 Leipzig\\
		Germany.
	}
	\email{rodrigues@ime.unicamp.br}
	
	
	\begin{abstract} 
		 In this paper we introduce a geometrically constrained optimal transport problem on Riemannian manifolds endowed with a Borel foliation, in which mass is transported only within individual leaves. We characterize the existence of foliated transport plans through the equality of the transverse projections of the marginals and show that the corresponding quadratic transport cost decomposes into a measurable family of leafwise optimal transport problems. We also formulate the problem as a classical Kantorovich problem with an extended-valued leafwise cost. Finally, we introduce a foliated continuity equation and establish a Benamou-Brenier formula relating the static and dynamical foliated formulations.
	\end{abstract}
	
	\keywords{Optimal Transport, foliations, disintegration, leafwise transport, Benamou-Brenier.}
	
	\subjclass{Primary 49Q22; Secondary 28A50, 53C12.}
	
	\maketitle
	
	\section{Introduction}

	Optimal transport provides a natural framework for investigating how the geometry of a space $X$ is reflected in the space $\mathscr P(X)$ of probability measures on it. Distances, admissible trajectories and geometric constraints on the underlying space induce corresponding structures on spaces of probability measures, although the precise way in which this information is transferred is often nontrivial \cite{Vil03, Vil09}. This relationship becomes particularly interesting when $X$ carries a distinguished decomposition into geometric layers. Such a decomposition naturally leads to the study of transport processes in individual layers. If we impose the geometric constraint that mass may move along these layers but cannot be transferred from one layer to another, then the resulting global problem is governed both by the geometry internal to each layer and by the way in which the total mass is distributed among them. This perspective provides a setting in which global transportation can be understood through a measurable family of lower-dimensional transport problems. In this work, this point of view is considered when the layers of $X$ are the leaves of a foliation $\mathcal F$. We impose the constraint that no mass can be transported between distinct leaves. Thus, the foliation does not merely alter the cost of transportation, but also determines which pairs of points may interact through an admissible transport plan. The resulting problem will be referred to as foliated optimal transport.
	
	A useful heuristic picture is provided by the distinction between laminar and turbulent motion. In a laminar flow, particles move along coherent layers, with no substantial mixing between neighboring layers. Turbulent motion, by contrast, is characterized by mixing and by trajectories that interact across different directions and scales. Foliated transport resembles the laminar picture: the leaves play the role of prescribed flow layers, and admissible transport is required to preserve them. Classical unconstrained optimal transport allows a coupling to connect arbitrary points of the ambient space, whereas foliated transport suppresses this transverse mixing by allowing interactions only between points lying on the same leaf. This analogy is intended only as geometric intuition, but it captures the central distinction between the two problems.
	
	To formalize the idea, we consider a Riemannian manifold $X$ endowed with a Borel foliation $\mathcal F$, in the sense that there exist a standard Borel space $X^*$ and a Borel map $\pi:X \to X^*$ whose fibers are precisely the leaves of $\mathcal F$. The space $X^*$ parametrizes the leaves and describes the transverse direction of the foliation. A transport plan between two probability measures $\mu$ and $\nu$ on $X$ is called foliated when it is concentrated on pairs $(x,x')$ satisfying $\pi(x)=\pi(x')$. The interchange from a decomposition of $X$ to a corresponding decomposition of probability measures is made possible by a disintegration theorem. Since this result will be used repeatedly, we recall a suitable version below.
	
	\
	
	\begin{theorem} [{\cite{Rok52, PR25}}] \label{theorem:disintegration}
		Let $X$ and $Y$ be standard Borel spaces, let $\pi: X \to Y$ be a Borel map. Given $\mu \in \mathscr P(X)$, set $\sigma:=\pi_*\mu$, that is, $\sigma$ is the push-forward of $\mu$ with respect to $\pi$. Then there exists a family of probability measures	$\{\mu^y\}_{y\in Y}$ on $X$ such that,
		\begin{enumerate}[leftmargin=*]
			\item $\mu^{y}(\pi^{-1}(\{y\}))=1$ for $\sigma$-almost every $y \in Y$;
			\item the map $y \mapsto \mu^y(E)$ is Borel, for every Borel subset $E \subset X$;
			\item $\mu(E)=\int \mu^{y}(E) ~d\sigma (y)$, for every Borel subset $E \subset X$.
		\end{enumerate}
		Moreover, the family $\{\mu^y\}_{y\in Y}$ is uniquely determined up to a $\sigma$-negligible set.
	\end{theorem}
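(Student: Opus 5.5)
The plan is the classical route through conditional expectations and regular versions, using that standard Borel spaces are Borel isomorphic to Borel subsets of $[0,1]$; this lets us assume $X$ is a compact metric space. By that isomorphism we may identify $X$ with a Borel subset of $[0,1]$, hence with a Borel subset of the compact space $K=[0,1]$, and regard $\mu$ as a measure on $K$ with full mass on the image of $X$. For each $f$ in a countable dense subset $\mathcal D\subset C(K)$, a $\mathbb Q$-vector space containing the constant $1$, define the signed measure $f\mu\circ\pi^{-1}$. It is absolutely continuous with respect to $\sigma=\pi_*\mu$, so Radon--Nikodym gives a Borel density $\Lambda_y(f)$ with
\[
\int_{\pi^{-1}(B)} f\,d\mu=\int_B \Lambda_y(f)\,d\sigma(y)
\]
for all Borel $B\subset Y$.

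Since $\mathcal D$ is countable, we can discard a $\sigma$-null set $N$ off which $f\mapsto\Lambda_y(f)$ is $\mathbb Q$-linear, positive, bounded by $\|f\|_\infty$, and satisfies $\Lambda_y(1)=1$. It then extends to a positive normalized functional on $C(K)$. Riesz representation yields a probability measure $\mu^y$ on $K$ for $y\notin N$; on $N$ we set $\mu^y$ equal to a fixed probability measure on $X$.

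Measurability of $y\mapsto\mu^y(E)$ for Borel $E$ follows from a monotone class / Dynkin argument. The map is Borel for $E$ open, as an increasing limit of $\int f_n\,d\mu^y$ with $f_n\in\mathcal D$, and the class of such $E$ is a $\lambda$-system. The same argument upgrades the identity $\mu(\pi^{-1}(B)\cap E)=\int_B\mu^y(E)\,d\sigma$ from continuous $f$ to all Borel $E$; taking $B=Y$ gives (3).

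The main obstacle is property (1): concentration of $\mu^y$ on the fiber $\pi^{-1}(y)$, and on $X$ rather than $K$. For $X$, we have $\mu(K\setminus X)=0$, so $\int\mu^y(K\setminus X)\,d\sigma=0$, and $\mu^y(X)=1$ almost surely. For the fibers, fix a countable family $\{B_n\}$ generating the Borel $\sigma$-algebra of $Y$ and separating points, which exists because $Y$ is standard. By the identity above, $\int_C\mu^y(\pi^{-1}B_n)\,d\sigma=\sigma(C\cap B_n)$ for all Borel $C$, so $\mu^y(\pi^{-1}B_n)=\mathbbm 1_{B_n}(y)$ for $\sigma$-a.e. $y$. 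Discarding countably many null sets, for a.e. $y$ the measure $\mu^y$ gives full mass to $\bigcap_{n:y\in B_n}\pi^{-1}B_n\cap\bigcap_{n:y\notin B_n}\pi^{-1}B_n^c=\pi^{-1}(\{y\})$, since the $B_n$ separate points.

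For uniqueness, suppose $\{\tilde\mu^y\}$ is another such family. Then (1) and (3) give $\int_B\mu^y(E)\,d\sigma=\mu(E\cap\pi^{-1}B)=\int_B\tilde\mu^y(E)\,d\sigma$ for all $B$. Hence $\mu^y(E)=\tilde\mu^y(E)$ a.e. for each $E$ in a countable $\pi$-system generating the Borel sets of $X$, and therefore $\mu^y=\tilde\mu^y$ for a.e. $y$.
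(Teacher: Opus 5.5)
Your argument is correct. The paper does not prove this statement itself but imports it from the literature (Rokhlin's disintegration theorem), so your route is necessarily different from anything in the text.

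Rokhlin's original theorem is formulated for measurable partitions of Lebesgue spaces, where conditional measures are defined, and measurable, only modulo null sets. You instead give the standard regular-conditional-probability construction:
\begin{enumerate}[leftmargin=*]
\item a Borel embedding of $X$ into the compact space $K=[0,1]$;
\item Radon--Nikodym densities $\Lambda_y(f)$ for a countable $\mathbb Q$-vector space $\mathcal D\subset C(K)$;
\item Riesz representation off a null set;
\item a Dynkin argument to pass from open sets to all Borel sets;
\item a countable separating family $\{B_n\}$ to force concentration on $\pi^{-1}(\{y\})$.
\end{enumerate}

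This buys a self-contained proof with genuinely Borel dependence $y\mapsto\mu^y(E)$. That is exactly item (2), and it is what the paper later relies on when it treats $y\mapsto\mu^y$ as a Borel map into $\mathscr P(X)$ in Lemma \ref{lemma-measurable-leafwise-selection}. Your proof also shows that, for existence, the only property of $Y$ you use is a countable family of Borel sets separating its points.

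Two small pieces of bookkeeping should be made explicit so that you land exactly on the statement.
\begin{enumerate}[leftmargin=*]
\item The exceptional set $N$ must be chosen Borel. It can be, since it is defined by countably many conditions on Borel densities. Otherwise item (2) only holds for the completion of $\sigma$.
\item Each $\mu^y$ is required to be a probability measure on $X$ for every $y$, not just almost every $y$. So you should also replace $\mu^y$ by your fixed measure on the Borel $\sigma$-null set $\{y:\mu^y(K\setminus X)>0\}$ before restricting to Borel subsets of $X$. This leaves (2) and (3) unchanged.
\end{enumerate}
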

	
	\
	
	\noindent In our context, the measure $\sigma$ describes how the total mass is distributed over the parameter space $Y = X^*$, while each conditional measure $\mu^y$ describes the distribution of mass inside the corresponding fiber $\mathcal L_y:=\pi^{-1}(\{y\})$. We show in Proposition \ref{Propositon_foliated_plans} that the existence of a foliated transport plan between $\mu$ and $\nu$ is equivalent to $\pi_*\mu=\pi_*\nu$. By Theorem \ref{theorem:disintegration}, we have
	\begin{displaymath}
		\mu=\int_{X^*}\mu^y\,d\sigma(y),
		\qquad
		\nu=\int_{X^*}\nu^y\,d\sigma(y),
	\end{displaymath}
	where, for $\sigma$-almost every $y$, the measures $\mu^y$ and $\nu^y$ are concentrated on the leaf $\mathcal L_y$. Therefore, the global transport problem can be analyzed through transport problems between $\mu^y$ and $\nu^y$.
	
	Our first main result makes this decomposition precise. To state it, let $\Pi_{\mathcal F}(\mu,\nu)$ denote the set of foliated transport plans between $\mu$ and $\nu$. For points $x,x'$ in the same leaf, let $d_{\mathcal L}(x,x')$ be their intrinsic Riemannian distance within the leaf, and set $d_{\mathcal L}(x,x')=+\infty$ for points in distinct leaves. The foliated quadratic transport cost is
	\begin{displaymath}
		W_{2,\mathcal F}^2(\mu,\nu):=\inf_{\gamma\in\Pi_{\mathcal F}(\mu,\nu)}\int_{X\times X}d_{\mathcal L}^2(x,x')\,d\gamma(x,x'),
	\end{displaymath}
	and $W_{2,\mathcal L_y}^2$ denotes the quadratic transport cost on the leaf $\mathcal L_y$ computed with $d_{\mathcal L}$. The precise definitions are given in Section \ref{Section:Kantorovich}.
	
	\
	
	\begin{maintheorem} \label{maintheorem:A}
		Let $X$ be a Riemannian manifold with a Borel foliation $\mathcal F$ of class $C^1$. Let $\mu,\nu\in\mathscr P(X)$. Then there exists a foliated transport plan between $\mu$ and $\nu$ if and only if $\pi_*\mu=\pi_*\nu$. In this case, setting $\sigma:=\pi_*\mu$ and letting $\{\mu^y\}_{y\in X^*}$ and $\{\nu^y\}_{y\in X^*}$ be disintegrations of $\mu$ and $\nu$ with respect to $\pi$, we have
		\begin{displaymath}
			W_{2,\mathcal F}^{2}(\mu,\nu)=\int_{X^*}W_{2,\mathcal L_y}^{2}(\mu^y,\nu^y)\,d\sigma(y),
		\end{displaymath}
		where both sides may be equal to $+\infty$. If, moreover, $X$ is complete and $W_{2,\mathcal F}^2(\mu,\nu)<+\infty$, then there exists an optimal foliated transport plan, that is, a plan $\gamma^\star\in\Pi_{\mathcal F}(\mu,\nu)$ with $\int_{X\times X}d_{\mathcal L}^2\,d\gamma^\star=W_{2,\mathcal F}^2(\mu,\nu)$.
	\end{maintheorem}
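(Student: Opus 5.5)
We prove the three parts of Theorem \ref{maintheorem:A} in order: existence of foliated plans, the decomposition formula, and existence of an optimal plan.

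\emph{Existence of foliated plans.} Suppose $\gamma\in\Pi_{\mathcal F}(\mu,\nu)$. Then $\gamma$ is concentrated on the set $\{\pi(x)=\pi(x')\}$, so $\pi\circ p_1=\pi\circ p_2$ holds $\gamma$-a.e. Hence
\[
\pi_*\mu=(\pi\circ p_1)_*\gamma=(\pi\circ p_2)_*\gamma=\pi_*\nu .
\]
Conversely, let $\sigma=\pi_*\mu=\pi_*\nu$ and take the disintegrations given by Theorem \ref{theorem:disintegration}. Define
\[
\gamma:=\int\mu^y\otimes\nu^y\,d\sigma(y).
\]
This is a well-defined measure: $y\mapsto(\mu^y\otimes\nu^y)(E)$ is Borel, first on rectangles and then on all Borel sets by a monotone class argument. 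Its marginals are $\mu$ and $\nu$. For $\sigma$-a.e. $y$ the product $\mu^y\otimes\nu^y$ is concentrated on $\mathcal L_y\times\mathcal L_y$. The set $\{\pi(x)=\pi(x')\}$ is Borel because $X^*$ is standard, so its diagonal is Borel. Therefore $\gamma$ is foliated.

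\emph{Decomposition formula.} For the inequality $\ge$, take any $\gamma\in\Pi_{\mathcal F}(\mu,\nu)$ and disintegrate it with respect to $\pi\circ p_1$. The marginal of this map is again $\sigma$. Because $\gamma$ is foliated, a.e. conditional $\gamma^y$ lives on $\mathcal L_y\times\mathcal L_y$. Its marginals are conditionals of $\mu$ and $\nu$ over $\pi$, so by the uniqueness clause of Theorem \ref{theorem:disintegration} they equal $\mu^y$ and $\nu^y$ for $\sigma$-a.e. $y$. Thus $\gamma^y\in\Pi(\mu^y,\nu^y)$, and
\[
\int d_{\mathcal L}^2\,d\gamma=\int\!\Big(\int d_{\mathcal L}^2\,d\gamma^y\Big)d\sigma(y)\ge\int W^2_{2,\mathcal L_y}(\mu^y,\nu^y)\,d\sigma(y).
\]
This step requires $y\mapsto W^2_{2,\mathcal L_y}(\mu^y,\nu^y)$ to be measurable, at least $\sigma$-measurable. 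To get this, we view $d_{\mathcal L}^2$ as a Borel function on $X\times X$ with values in $[0,\infty]$. The $C^1$ hypothesis should give that it is Borel, being a limit of infima of lengths of leafwise curves.

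For the inequality $\le$, we may assume the right-hand side is finite. We then need a measurable selection $y\mapsto\gamma^y$ of $\varepsilon$-optimal (or optimal) leafwise plans. Gluing these as $\int\gamma^y\,d\sigma$ produces a foliated plan whose cost is at most the right-hand side plus $\varepsilon$. We apply a measurable selection theorem (Kuratowski--Ryll-Nardzewski, or Aumann/von Neumann for analytic graphs) to the set
\[
\{(y,\eta):\eta\in\Pi(\mu^y,\nu^y),\ \textstyle\int d_{\mathcal L}^2\,d\eta\le W^2_{2,\mathcal L_y}(\mu^y,\nu^y)+\varepsilon\}
\]
inside $X^*\times\mathscr P(X\times X)$. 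Here $y\mapsto(\mu^y,\nu^y)$ is Borel into $\mathscr P(X)^2$, and $\eta\mapsto\int d_{\mathcal L}^2\,d\eta$ is Borel. The set is therefore analytic, and the von Neumann selection is universally measurable, which suffices after modifying on a $\sigma$-null set. This measurable selection, together with the associated measurability of the value function, is the main obstacle.

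\emph{Existence of an optimal plan.} Assume $X$ is complete and $W_{2,\mathcal F}^2(\mu,\nu)<\infty$. Two routes are available.

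The first is to work globally. The function $d_{\mathcal L}$ is lower semicontinuous on $X\times X$ with values in $[0,\infty]$, because a limit of leafwise curves of bounded length stays in a leaf. This is delicate for a merely Borel foliation, but plausible under the $C^1$ assumption. The set $\Pi(\mu,\nu)$ is tight. Any plan of finite cost is foliated, since the cost is infinite off the leaf relation. Take a minimizing sequence in $\Pi_{\mathcal F}(\mu,\nu)$, extract a weak limit by Prokhorov, and use lower semicontinuity of the cost functional. The limit has finite cost, hence is foliated, hence optimal.

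The second route, which we use as a fallback if lower semicontinuity fails, is leafwise. Each leaf with $d_{\mathcal L}$ is a complete length space, since leaves of a complete manifold are complete in the induced metric. For $\sigma$-a.e. $y$ with finite cost, classical theory gives an optimal leafwise plan. Measurable selection of optimal plans, which is possible because the optimal set is compact and nonempty, followed by gluing and the identity above, yields $\gamma^\star$.
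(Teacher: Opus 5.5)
Your architecture matches the paper's. You use the product coupling $\int\mu^y\otimes\nu^y\,d\sigma(y)$ for existence, disintegration of $\gamma$ along $\pi\circ\operatorname{proj}_1$ with essential uniqueness for ``$\geq$'', gluing of $\varepsilon$-optimal leafwise plans for ``$\leq$'', and lower semicontinuity of $d_{\mathcal L}$ plus the classical Kantorovich existence theorem for $\gamma^\star$. The existence part and ``$\geq$'' are fine.

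The selection step, however, fails as written. The value function $m(y)=W^2_{2,\mathcal L_y}(\mu^y,\nu^y)$ is only lower semianalytic: its sublevel sets $\{m<c\}$ are projections of the Borel sets $\{(y,\eta):\eta\in\Pi(\mu^y,\nu^y),\ \int d_{\mathcal L}^2\,d\eta<c\}$. Your condition $\int d_{\mathcal L}^2\,d\eta\leq m(y)+\varepsilon$ is a lower bound on $m$. So your set is co-analytic, need not be analytic, and the Jankov--von Neumann theorem does not apply to it. The standard fix is what the paper does in Lemma~\ref{lemma-measurable-leafwise-selection} via Bertsekas--Shreve, Props.~7.47 and 7.50. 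Uniformize instead the Borel sets given by the strict bounds $\int d_{\mathcal L}^2\,d\eta<k\varepsilon$, $k\in\N$, over their analytic projections $\{m<k\varepsilon\}$, and at each $y$ use the least admissible $k$.

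The same objection applies to your second route: compactness of the set of leafwise minimizers does not by itself give a measurable selector, so you need the exact-minimizer version, Prop.~7.50(b). With that, the leafwise route would in fact need no completeness. Each leaf is Polish in its own topology, its Borel sets are traces of Borel sets of $X$ by Lusin--Souslin, and $d_{\mathcal L}^2$ is continuous on it. So $\sigma$-almost every leafwise problem has a minimizer, and gluing selected minimizers yields $\gamma^\star$ by Theorem~\ref{Theorem-2}.

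The second gap is that both regularity properties of $d_{\mathcal L}$ are asserted rather than proved (``should give'', ``plausible''). In the paper they form Lemma~\ref{lemma:dL-Borel-lsc}, and this is exactly where the $C^1$ and completeness hypotheses enter. Calling $d_{\mathcal L}$ ``a limit of infima of lengths'' does not make it Borel, since the infimum runs over uncountably many curves. Two ingredients are missing.
\begin{enumerate}
\item A leaf meets a foliated chart in at most countably many plaques. So a continuous curve in the chart with image in one leaf has a continuous, countably-valued, hence constant transverse coordinate. Covering a limit curve by finitely many charts with overlapping parameter intervals, a uniform limit of curves, each contained in a single leaf, is again contained in a single leaf.
\item Compactness. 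Near-minimizing leafwise curves parametrized proportionally to arc length are equi-Lipschitz and, by Hopf--Rinow, stay in a fixed compact ball. Arzel\`a--Ascoli then gives a uniformly convergent subsequence. Its limit is leafwise by the first point and, by lower semicontinuity of $\operatorname{Length}$, has length at most the $\liminf$ of the lengths.
\end{enumerate}
This shows that $\{d_{\mathcal L}\leq r\}$ is closed when $X$ is complete. Without completeness, the same argument applied to curves confined to the compact sets $K_j$ of an exhaustion shows that $\{d_{\mathcal L}<r\}$ is $F_\sigma$, so $d_{\mathcal L}$ is Borel. This is what makes your integrals meaningful, makes $\eta\mapsto\int d_{\mathcal L}^2\,d\eta$ a Borel map, and hence makes the selection step meaningful.
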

	
	\
	
	Theorem \ref{maintheorem:A} combines Proposition \ref{Propositon_foliated_plans}, Theorem \ref{Theorem-2}, Lemma \ref{lemma:dL-Borel-lsc} and Corollary \ref{corollary:existence-optimal-plans}. It shows that foliated optimal transport is a measurable family of independent optimal transport problems on the leaves. The common transverse measure $\sigma$ specifies how these problems are assembled into a global transport problem, while the conditional measures determine the transportation performed inside each individual leaf.
	
	The leafwise constraint also admits an equivalent formulation as a classical Kantorovich problem with the extended-valued cost $d_{\mathcal L}^2$, Theorem \ref{theorem:extended-cost-formulation}. Since $d_{\mathcal L}=+\infty$ for points in distinct leaves, plans that move a positive amount of mass between distinct leaves automatically have infinite cost. Thus, the geometric restriction may be expressed either through the admissible class of transport plans or directly through the cost function. This formulation is what yields the existence of optimal foliated transport plans in Theorem \ref{maintheorem:A}, through the classical existence theory for lower semicontinuous costs. Indeed, we show in Lemma \ref{lemma:dL-Borel-lsc} that $d_{\mathcal L}$ is always Borel, and that it is lower semicontinuous whenever $X$ is complete.

	We also develop a dynamical formulation of the problem. In the classical Benamou-Brenier theory, transport is described by a curve of probability measures $(\mu_t)_{t\in[0,1]}$ and a velocity field $v_t$ satisfying a continuity equation, and the classical cost for the transport is recovered by minimizing the kinetic action. In the foliated setting, we impose $v_t(x)\in T_x\mathcal L_x$. This tangency condition is the dynamical counterpart of requiring a transport plan to be concentrated on pairs of points belonging to the same leaf. It also implies the conservation law $\pi_*\mu_t=\pi_*\mu_0$ throughout the evolution: mass may be rearranged inside the leaves, but its transverse distribution cannot change. Minimizing the kinetic action over such solutions defines the dynamical foliated cost
	\begin{displaymath}
		\mathcal W_{2,\mathcal F}^2(\mu_0,\mu_1):=\inf\left\{\int_0^1\int_X\|v_t(x)\|_g^2\,d\mu_t(x)\,dt:(\mu_t,v_t)\in\operatorname{CE}_{\mathcal F}(\mu_0,\mu_1)\right\},
	\end{displaymath}
	where $g$ is the Riemannian metric of $X$ and $\operatorname{CE}_{\mathcal F}(\mu_0,\mu_1)$ denotes the set of solutions of the continuity equation joining $\mu_0$ to $\mu_1$ whose velocity fields are tangent to the leaves. The precise definitions are given in Section \ref{Section:Benamou}.
	
	\
	
	\begin{maintheorem} \label{maintheorem:B}
		Let $X$ be a complete Riemannian manifold endowed with a Borel foliation $\mathcal F$ of class $C^1$. Then, for every $\mu_0,\mu_1\in\mathscr P_2(X)$,
		\begin{displaymath}
			\mathcal W_{2,\mathcal F}^{2}(\mu_0,\mu_1)=W_{2,\mathcal F}^{2}(\mu_0,\mu_1).
		\end{displaymath}
	\end{maintheorem}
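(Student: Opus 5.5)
We prove the two inequalities separately. Theorem \ref{maintheorem:A} reduces the static side to leafwise problems. The dynamical side is handled by the leafwise tangency of $v_t$.

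\emph{Step 1: $\mathcal W_{2,\mathcal F}^2\ge W_{2,\mathcal F}^2$.} Take $(\mu_t,v_t)\in\operatorname{CE}_{\mathcal F}(\mu_0,\mu_1)$ with finite action; otherwise there is nothing to prove. By the superposition principle on a complete Riemannian manifold (Ambrosio--Gigli--Savar\'e, after a Nash embedding or working in charts), there is a probability measure $\eta$ on $C([0,1];X)$ with three properties: it is concentrated on absolutely continuous curves solving $\dot\omega(t)=v_t(\omega(t))$ for a.e. $t$; its time marginals satisfy $(e_t)_*\eta=\mu_t$; and
\[\int\!\!\int_0^1\|\dot\omega\|_g^2\,dt\,d\eta=\int_0^1\!\!\int_X\|v_t\|_g^2\,d\mu_t\,dt.\]
Since $v_t(x)\in T_x\mathcal L_x$, $\eta$-a.e. curve has velocity tangent to the foliation. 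The key point is that such a curve stays in one leaf and is a leafwise path. For a $C^1$ foliation this follows from a local argument: in a foliated chart the transverse coordinates of $\omega$ have zero derivative a.e., and being absolutely continuous they are constant. A connectedness argument along $[0,1]$ then shows $\omega$ lies in a single leaf. Its Riemannian length is an upper bound for $d_{\mathcal L}(\omega(0),\omega(1))$. By Cauchy--Schwarz, $d_{\mathcal L}^2(\omega(0),\omega(1))\le\int_0^1\|\dot\omega\|^2dt$. Hence $\gamma:=(e_0,e_1)_*\eta$ is foliated, and
\[\int d_{\mathcal L}^2\,d\gamma\le\int_0^1\!\!\int_X\|v_t\|^2\,d\mu_t\,dt.\]
Taking the infimum over $(\mu_t,v_t)$ gives this inequality.

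\emph{Step 2: $\mathcal W_{2,\mathcal F}^2\le W_{2,\mathcal F}^2$.} Assume $W_{2,\mathcal F}^2<\infty$. Theorem \ref{maintheorem:A} then gives an optimal foliated plan $\gamma^\star$, supported on pairs in a common leaf at finite $d_{\mathcal L}$-distance. The leaves need not be complete, so we do not insist on minimizing geodesics. Fix $\varepsilon>0$ and use a measurable selection theorem (Kuratowski--Ryll-Nardzewski or von Neumann's theorem, applied to the Borel/analytic set of pairs and leafwise curves) to choose, for $\gamma^\star$-a.e. $(x,x')$, a constant-speed leafwise curve $\omega_{x,x'}$ from $x$ to $x'$ with
\[\int_0^1\|\dot\omega_{x,x'}\|^2\,dt\le d_{\mathcal L}^2(x,x')+\varepsilon.\]
Let $\eta:=(\omega)_*\gamma^\star$ and $\mu_t:=(e_t)_*\eta$. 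Define $v_t$ by barycentric projection: $v_t\,\mu_t$ is the push-forward under $e_t$ of the vector measure $\dot\omega(t)\,\eta$. Each $\dot\omega(t)$ lies in the leaf tangent space at $\omega(t)$, and this space is a linear subspace. So the conditional average $v_t(x)$ lies in $T_x\mathcal L_x$. Testing against smooth functions shows that $(\mu_t,v_t)$ solves the continuity equation. Jensen's inequality gives
\[\int_0^1\!\!\int_X\|v_t\|^2\,d\mu_t\,dt\le W_{2,\mathcal F}^2+\varepsilon.\]
Letting $\varepsilon\to0$ gives the reverse inequality. If $W_{2,\mathcal F}^2=\infty$, Step 1 forces $\mathcal W_{2,\mathcal F}^2=\infty$ as well.

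\emph{Main obstacle.} The delicate part is Step 1's claim that an absolutely continuous curve with a.e.\ leaf-tangent velocity stays in one leaf. Two issues arise. First, the curve's velocity is only tangent to the foliation $\eta\otimes dt$-a.e., which is not the same as along every curve. This is settled by Fubini: $\eta$-a.e.\ curve is tangent for a.e.\ $t$. Second, in a chart, time intervals where the curve sits in different plaques of the same leaf must be matched up. We would handle this by covering $\omega([0,1])$ by finitely many foliated charts and a Lebesgue-number argument, which keeps the curve in a single plaque on each subinterval. The measurability needed for the selection in Step 2 is secondary; it follows the same Borel-structure arguments as Lemma \ref{lemma:dL-Borel-lsc}.
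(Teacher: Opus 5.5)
Your proof is correct and follows the paper's route. Step 1 is the leafwise superposition argument of Lemma \ref{lemma:leafwise-superposition} followed by Cauchy--Schwarz, and Step 2 is the lift of a foliated plan to leafwise curves plus the barycentric construction of Lemma \ref{lemma:eulerian-representation}. The only difference is where the $\varepsilon$ goes: you take an optimal plan with $\varepsilon$-almost minimizing curves chosen by a measurable selection, whereas the paper takes an $\varepsilon$-optimal plan with exact leafwise minimizers from Lemma \ref{lemma:borel-geodesic-selection}.

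Your stated reason for avoiding minimizers is unfounded, though harmless. Since $X$ is complete, almost-minimizing leafwise curves from $x$ stay in a compact ball of $X$, and their uniform limits remain in a single leaf, so exact leafwise minimizers always exist.

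To place $(\mu_t,v_t)$ in $\operatorname{CE}_{\mathcal F}(\mu_0,\mu_1)$, you should also record that $t\mapsto\mu_t$ is narrowly continuous with $\mu_t\in\mathscr P_2(X)$. This follows from $\mu_0\in\mathscr P_2(X)$ and the finite energy of $\eta$.
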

	
	\
	
	Theorem \ref{maintheorem:B} is the foliated Benamou-Brenier formula, proved in Section \ref{Section:Benamou} as Theorem \ref{theorem:foliated-benamou-brenier}. Its proof relies on two representation results adapted to the foliated setting, a leafwise superposition principle, Lemma \ref{lemma:leafwise-superposition}, and an Eulerian representation of measures on leafwise curves, Lemma \ref{lemma:eulerian-representation}. It also uses a Borel selection of leafwise minimizing geodesics, Lemma \ref{lemma:borel-geodesic-selection}, which exists on every complete manifold by the compactness argument behind Lemma \ref{lemma:dL-Borel-lsc}.
	
	The static and dynamical formulations reveal complementary aspects of the same geometric structure. In the static description, the foliation restricts the pairs of points that may be coupled. In the dynamical description, it restricts the admissible directions of motion. The superposition principle connects these viewpoints by representing solutions of the continuity equation through measures on trajectories, while measurable selections of leafwise transport plans and minimizing geodesics allow the individual leafwise problems to be assembled globally.
	
	\
	
	\noindent \emph{Related work.} Optimal Transport with couplings restricted to fibers has recently been studied in product and bundle settings. An earlier instance appears in the work of Gigli, who considered a distance between probability measures on the tangent bundle of a Riemannian manifold with a prescribed projection onto the manifold, defined through couplings that only pair vectors based at the same point \cite{Gig11}. Peszek and Poyato \cite{PP23} studied a fibered optimal transport distance on probability measures over $\mathbb R^d\times\mathbb R^d$, whose cost is infinite between distinct fibers, showed that the resulting space is Polish, and endowed it with a weak Riemannian structure in the spirit of Otto's formal calculus. They also characterized absolutely continuous curves through a continuity
	equation along the fibers, with estimates relating the velocity field to
	the metric derivative \cite{PP23}. Chemseddine, Hagemann, Steidl and Wald \cite{CHSW25} defined a conditional Wasserstein distance on product spaces through restricted couplings, proved that it equals the expected Wasserstein distance between the conditional measures, and showed that the displacement interpolation of an optimal restricted plan is a geodesic whose velocity field transports no mass across fibers and solves the continuity equation. Kitagawa and Takatsu \cite{KT24} introduced a family of disintegrated Monge-Kantorovich metrics on metric fiber bundles, which includes fibered Wasserstein distances, and proved that these metrics are complete and geodesic, with a dual representation. They also showed that, when both exponents coincide, these metrics are given by a transport problem whose cost is infinite between distinct fibers, a bundle analogue of Theorem \ref{theorem:extended-cost-formulation}. In their setting all fibers are isometric to a fixed model space, and the cost on each fiber is the restriction of the ambient distance. For product foliations, the leafwise decomposition in Theorem \ref{maintheorem:A} and one of the two inequalities in Theorem \ref{maintheorem:B} thus have counterparts in \cite{CHSW25}. The present setting differs from these works in four respects. First, the leaves of a Borel foliation need not be the fibers of a product or of a locally trivial bundle, as Example \ref{example:S3-foliation} shows. Second, the leaf space is only assumed to be a standard Borel space. Third, the cost is given by the intrinsic Riemannian distance of the leaves. Finally, to our knowledge, the full equivalence between the static and dynamical formulations, obtained here through a leafwise superposition principle, has no counterpart in these works.
	
	On the dynamical side, optimal transport with velocities constrained to a distribution has been studied on sub-Riemannian manifolds \cite{AL09, FR10}. There the distribution is bracket-generating, so any two points can be joined by admissible curves, and the constraint affects the cost but not the admissible couplings. In the foliated setting the distribution is integrable, admissible curves remain in a single leaf, and the constraint restricts the support of transport plans.
	
	\
	
	The paper is organized as follows. In Section \ref{Section:Kantorovich}, we recall the classical formulation of the optimal transport problem and introduce Borel foliations and foliated transport plans. Proposition \ref{Propositon_foliated_plans} characterizes the existence of foliated transport plans, and Proposition \ref{proposition_2} shows that every foliated transport plan admits a disintegration into transport plans acting on the individual leaves. Lemma \ref{lemma:dL-Borel-lsc} shows that the extended leafwise distance is Borel, and lower semicontinuous when $X$ is complete. We prove in Theorem \ref{Theorem-2} that the foliated quadratic transport cost decomposes as the integral of the corresponding optimal transport costs on the leaves. Theorem \ref{theorem:extended-cost-formulation} provides an equivalent formulation of this problem as a classical Kantorovich problem with an extended-valued leafwise cost, from which Corollary \ref{corollary:existence-optimal-plans} deduces the existence of optimal foliated transport plans. Together, these results prove Theorem \ref{maintheorem:A}. We conclude the section with examples arising from smooth submersions and locally free group actions. In Section \ref{Section:Benamou}, we introduce the foliated continuity equation and the associated dynamical transport cost. Lemma \ref{lemma:leafwise-superposition} is a leafwise superposition principle, representing solutions of the foliated continuity equation by measures on leafwise curves, and Proposition \ref{proposition:transverse-conservation} deduces that the transverse marginal is preserved along every admissible evolution. Lemma \ref{lemma:eulerian-representation} provides the converse representation, from measures on leafwise curves to solutions of the foliated continuity equation. Lemma \ref{lemma:borel-geodesic-selection} provides a Borel selection of leafwise minimizing geodesics. Finally, Theorem \ref{theorem:foliated-benamou-brenier} establishes the foliated Benamou-Brenier formula, which is Theorem \ref{maintheorem:B}, proving the equivalence between the static and dynamical formulations of foliated quadratic transport.

	\

	\section{Monge-Kantorovich problem} \label{Section:Kantorovich}

		
		The foundations of Optimal Transport Theory can be traced back to a logistical problem formulated by Gaspard Monge in 1781. The central idea is to transport a given mass distribution from one location to another while minimizing the cost. More precisely, let $X$ and $Y$ be complete separable metric spaces, and let $\mu \in \mathscr{P}(X)$ and $\nu \in \mathscr{P}(Y)$ be probability measures on $X$ and $Y$, respectively. The problem consists in finding a measurable map $T \colon X \to Y$ such that $T_*\mu = \nu$ and that minimizes the total transportation cost, $\int_X c(x,T(x)) ~d\mu(x)$, where $c \colon X \times Y \to [0,+\infty]$ is a Borel cost function. This formulation, known as the Monge Problem, describes transportation via a deterministic map. In general, an optimal transport map need not exist for arbitrary probability measures and cost functions.
		
		The Kantorovich formulation provides a relaxation of the Monge Problem. More precisely, we define a transport plan between $\mu$ and $\nu$ as a probability measure $\gamma \in \mathscr{P} (X \times Y)$ whose first and second marginals are $\mu$ and $\nu$, respectively. We denote the set of all such plans by
		\begin{displaymath}
			\Pi(\mu,\nu) = \left\{ \gamma \in \mathscr{P}(X \times Y): (\operatorname{proj}_1)_* \gamma = \mu, ~ (\operatorname{proj}_2)_*\gamma = \nu	\right\},
		\end{displaymath}
		where $\operatorname{proj}_1(x,y) = x$, $\operatorname{proj}_2(x,y) = y$. The Monge-Kantorovich Problem is
		\begin{displaymath}
			\inf_{\gamma \in \Pi(\mu,\nu)} \int_{X\times Y} c(x,y) ~d\gamma(x,y).
		\end{displaymath}
		Here, for measurable sets $A\subset X$ and $B\subset Y$, $\gamma (A \times B)$ represents the amount of mass transported from $A$ to $B$. Under certain assumptions on the space and on the cost function, the Monge-Kantorovich problem admits an optimal transport plan.
		
		A particularly important example is given when we define both measures on the same complete separable metric space $X$, and we choose the quadratic cost $c(x,y)=d^2(x,y)$. In this case, the corresponding optimal transport cost defines a metric on the space of probability measures with finite second moment, known as the $2$-Wasserstein distance. More precisely, we define:	
		
		\
	
		\begin{definition}
		Let $(X, d)$ be a separable complete metric space. Consider $p \in [1, \infty)$ and probability measures $\mu$ and $\nu$ on $X$. The Wasserstein distance of order $p$ between $\mu$ and $\nu$ is given by
		\begin{displaymath}
			W_{p}(\mu, \nu):= \Big( \inf\limits_{\gamma \in \Pi(\mu, \nu)}  \int d^p(x_{1}, x_{2}) ~d\gamma (x_{1}, x_{2})\Big)^{\frac{1}{p}}.
		\end{displaymath}
	\end{definition}
	
	\
	
	The same formula makes sense on any separable metric space, complete or not, and it is in this sense that it is applied below to the leaves endowed with their intrinsic distance. In general, $W_{p}$ is not a distance in the strict sense, because it can take the value $+\infty$. To avoid this issue, it is natural to restrict $W_p$ to a suitable subset of probability measures on which it is finite.
	
	\
	
	\begin{definition}
		The Wasserstein space of order $p$ is defined by
		\begin{displaymath}
			\mathscr{P}_{p}(X):= \Big\{ \mu \in \mathscr{P}(X) : \int d^p(x, \tilde{x}) \mu(dx) < +\infty \Big\}
		\end{displaymath}
		for $\tilde{x} \in X$ arbitrary, endowed with $W_p$. 
	\end{definition}
	
	\

	The Monge-Kantorovich Problem does not specify how the mass is transported. Apart from the marginal constraints, the support of a transport plan is not subject to any prescribed geometric restriction. In many situations, however, the geometry of the ambient	space may impose constraints on which transport trajectories are physically	or geometrically admissible. 
	
	A natural and geometrically meaningful example arises when the ambient
	space is equipped with a foliation. In this case, the leaves provide a
	distinguished family of subspaces along which transport can be performed.
	The requirement that mass be transported only inside individual leaves can
	therefore be interpreted as a geometric restriction on the admissible
	transport plans.
	
	Let $(X,d)$ be a Riemannian manifold, with Riemannian metric $g$ and associated distance $d$, endowed with a foliation $\mathcal{F}$ of class $C^1$, of dimension $k$ and codimension $\ell=\dim (X) - k$. That is, $X$ admits an atlas of $C^1$ foliated charts $\phi:U\to D^k\times D^\ell$, where $D^k\subset\mathbb R^k$ and $D^\ell\subset\mathbb R^\ell$ are open balls, whose plaques $\phi^{-1}(D^k\times\{z\})$, $z\in D^\ell$, are connected open subsets of leaves \cite{CC00}. Each leaf, endowed with its own manifold structure, is an injectively immersed $C^1$ submanifold of $X$, and we equip it with the Riemannian metric induced by $g$. A curve in $X$ is called absolutely continuous if it is absolutely continuous in local charts. Throughout this paper, all manifolds are assumed to be finite-dimensional, Hausdorff, second-countable, and without boundary. The ambient Riemannian manifold $X$ is assumed to be connected. Every manifold considered below is Polish with respect to its manifold topology, and its Borel	sigma-algebra therefore defines a standard Borel space. This does not require the chosen Riemannian distance to be complete. Whenever completeness of $X$ is assumed, it refers to the distance $d$ induced by the fixed Riemannian metric $g$. Suppose that $\mathcal F$ is Borel, in the following sense.
	
	\
	
	\begin{definition}[Borel foliation] \label{Def-Borel-foliation}
		A foliation $\mathcal{F}$ of a Riemannian manifold $(X,d)$ is Borel if there exists a standard Borel space $X^*$ and a Borel map $\pi: X \to X^*$ such that $\pi (x) = \pi (x')$ if and only if $x, x'$ belong to the same leaf of $\mathcal F$.
	\end{definition}

\
	
	\begin{remark} \label{remark:scope-Borel-foliation}
		Definition \ref{Def-Borel-foliation} amounts to requiring that ``$x$ and $x'$ lie in the same leaf'' is a smooth equivalence relation, in the sense of descriptive set theory \cite{HKL90}. This is a genuine restriction. For instance, let $\mathcal F$ be the linear foliation of $\mathbb T^2=\mathbb R^2/\mathbb Z^2$ by lines of direction $(1,\theta)$, with $\theta$ irrational. Then $\mathcal F$ is not Borel in the sense od Definition \ref{Def-Borel-foliation}.~ Indeed, the circle $C:=\{0\}\times(\mathbb R/\mathbb Z)$ meets every leaf, and each leaf meets $C$ in an orbit of the rotation $R_\theta(s)=s+\theta$. Suppose that a Borel map $\pi:\mathbb T^2\to {(\mathbb{T}^2)}^*$ as in Definition \ref{Def-Borel-foliation} exists, and let $m$ be the Lebesgue measure on $C$. For every Borel set $B\subset {(\mathbb{T}^2)}^*$, the set $C\cap\pi^{-1}(B)$ is $R_\theta$-invariant, so its $m$-measure is $0$ or $1$ by the ergodicity of $R_\theta$ \cite{Wal82}. Let $\{B_n\}_{n\in\N}$ be a countable family of Borel sets separating the points of ${(\mathbb{T}^2)}^*$, and let 
		\begin{displaymath}
			B_n' =
			\begin{cases}
				B_n, &\text{if~~} m(B_n) = 1 \\
				{(\mathbb{T}^2)}^* \setminus B_n,  &\text{if ~~} m(B_n) = 0,
			\end{cases}
		\end{displaymath}
		chosen so that $m(C\cap\pi^{-1}(B_n'))=1$. Then $\bigcap_n\bigl(C\cap\pi^{-1}(B_n')\bigr)$ has full $m$-measure, and $\pi$ is constant on it, since $\bigcap_nB_n'$ contains at most one point. Hence a set of full $m$-measure is contained in a single orbit of $R_\theta$, which is countable. This contradicts the fact that $m$ has no atoms. Foliations of this type also arise as the stable and unstable foliations of hyperbolic toral automorphisms.
	\end{remark}
	
	\
	
	For each $x \in X$, let $\mathcal L_x$ be the leaf of $\mathcal F$ which contains $x \in X$. The foliation provides a notion of points which belong to the same geometric layer of the space. We shall regard these leaves as the allowed regions along which mass may be transported. 
	
	\
	
	\begin{definition}[Foliated transport plan]
		Let $X$ be a Riemannian manifold with associated distance $d$, endowed with a Borel foliation $\mathcal{F}$. Given $\mu$ and $\nu$ probability measures on $X$, we say that a transport plan $\gamma \in \Pi(\mu, \nu)$ is a foliated transport plan if $\gamma \bigl( \{ (x,x') \in X \times X : x' \notin \mathcal{L}_x \} \bigr) = 0$, or, equivalently, if $\gamma (\bigl\{(x,x')\in X\times X:\pi(x)=\pi(x')\bigr\}) = 1$. Moreover, we define 
		\begin{equation*}
			\Pi_{\mathcal{F}}(\mu,\nu) := \left\{ \gamma \in \Pi (\mu, \nu) : ~ \gamma \left( \bigcup_{y\in X^*} (\mathcal{L}_y\times \mathcal{L}_y) \right) = 1
			\right\}
		\end{equation*}
		the set of foliated transport plans between $\mu$ and $\nu$ given $\mathcal{F}$.
	\end{definition} 
	
	\
	
	\noindent The set appearing in this definition is measurable. Indeed, consider
	\begin{displaymath}
		E_{\mathcal F}:=\bigcup_{y\in X^*}(\mathcal L_y\times\mathcal L_y)=\bigl\{(x,x')\in X\times X:\pi(x)=\pi(x')\bigr\}=(\pi\times\pi)^{-1}(\Delta_{X^*}),
	\end{displaymath}
	where $\Delta_{X^*}:=\{(y,y):y\in X^*\}$. Since $X^*$ is a standard Borel space, it admits a countable family $\{B_n\}_{n\in\N}$ of Borel sets separating its points, and then $\Delta_{X^*}=\bigcap_{n\in\N}\bigl[(B_n\times B_n)\cup(B_n^c\times B_n^c)\bigr]$ is a Borel subset of $X^*\times X^*$. Since $\pi\times\pi$ is Borel, $E_{\mathcal F}$ is a Borel subset of $X\times X$. We keep the notation $E_{\mathcal F}$ throughout the paper.
		
	In this context, the foliation acts as a geometric constraint on transport problems: mass may move along a leaf, but it cannot be transported from one leaf to another.
	
	\
	
	\begin{proposition} \label{Propositon_foliated_plans}
		Let $X$ be a Riemannian manifold with associated distance $d$, endowed with a Borel foliation $\mathcal{F}$. Given $\mu$ and $\nu$ probability measures on $X$, we have $\Pi_{\mathcal F}(\mu,\nu) \neq \varnothing$ if and only if $\pi_* \mu = \pi_* \nu$, where $\pi$ is the Borel map $\pi: X \to X^*$.
	\end{proposition}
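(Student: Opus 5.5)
The proof splits into the two implications. Necessity is a direct computation with the marginals, and sufficiency builds a foliated plan from the disintegration in Theorem \ref{theorem:disintegration}.

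\emph{Necessity.} Suppose $\gamma\in\Pi_{\mathcal F}(\mu,\nu)$ and let $B\subset X^*$ be Borel. Since $\gamma(E_{\mathcal F})=1$ and $\pi(x)=\pi(x')$ on $E_{\mathcal F}$, the sets $(\pi\circ\operatorname{proj}_1)^{-1}(B)$ and $(\pi\circ\operatorname{proj}_2)^{-1}(B)$ coincide on $E_{\mathcal F}$. Hence
\begin{displaymath}
\pi_*\mu(B)=\gamma\bigl((\pi\circ\operatorname{proj}_1)^{-1}(B)\cap E_{\mathcal F}\bigr)=\gamma\bigl((\pi\circ\operatorname{proj}_2)^{-1}(B)\cap E_{\mathcal F}\bigr)=\pi_*\nu(B).
\end{displaymath}
This direction only uses the measurability of $E_{\mathcal F}$ established above.

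\emph{Sufficiency.} Assume $\sigma:=\pi_*\mu=\pi_*\nu$, and disintegrate $\mu$ and $\nu$ with respect to $\pi$ via Theorem \ref{theorem:disintegration}, obtaining families $\{\mu^y\}$ and $\{\nu^y\}$. Using the independent (product) coupling leafwise, I would set
\begin{displaymath}
\gamma(A):=\int_{X^*}(\mu^y\otimes\nu^y)(A)\,d\sigma(y),\qquad A\subset X\times X \text{ Borel}.
\end{displaymath}

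The main technical point is that $y\mapsto(\mu^y\otimes\nu^y)(A)$ is Borel for every Borel $A$, so that $\gamma$ is well defined. On rectangles $A=E\times F$ the map is $y\mapsto\mu^y(E)\nu^y(F)$, which is a product of Borel functions. The class of sets $A$ for which measurability holds is a Dynkin system, closed under proper differences and increasing unions by monotone convergence. Rectangles form a $\pi$-system generating the Borel $\sigma$-algebra of $X\times X$, since $X$ is second countable. The $\pi$-$\lambda$ theorem then gives measurability for all Borel $A$. Countable additivity of $\gamma$ follows from monotone convergence.

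It remains to check that $\gamma$ has the right marginals and is concentrated on $E_{\mathcal F}$:
\begin{enumerate}[leftmargin=*]
\item \emph{Marginals.} We have $\gamma(E\times X)=\int\mu^y(E)\,d\sigma=\mu(E)$, and similarly $\gamma(X\times F)=\nu(F)$. So $\gamma\in\Pi(\mu,\nu)$, and in particular $\gamma$ is a probability measure.
\item \emph{Foliated.} For $\sigma$-a.e.\ $y$, the measures $\mu^y$ and $\nu^y$ are concentrated on $\mathcal L_y$. Therefore $(\mu^y\otimes\nu^y)(\mathcal L_y\times\mathcal L_y)=1$. Since $\mathcal L_y\times\mathcal L_y\subset E_{\mathcal F}$, we get $(\mu^y\otimes\nu^y)(E_{\mathcal F})=1$ for $\sigma$-a.e.\ $y$. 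Integrating over $y$ gives $\gamma(E_{\mathcal F})=1$.
\end{enumerate}
Thus $\Pi_{\mathcal F}(\mu,\nu)\neq\varnothing$.

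Two small points need care in the foliated step. Each leaf $\mathcal L_y=\pi^{-1}(\{y\})$ is Borel, because points of the standard Borel space $X^*$ are Borel. Also, the bound $(\mu^y\otimes\nu^y)(E_{\mathcal F})=1$ is used only for $\sigma$-a.e.\ $y$, which is all the integral requires.
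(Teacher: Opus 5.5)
Your proof is correct and follows essentially the same route as the paper. Necessity compares the two transverse marginals using $\gamma(E_{\mathcal F})=1$; you argue with sets where the paper uses bounded test functions, which is equivalent. Sufficiency uses the leafwise product coupling $\int_{X^*}\mu^y\otimes\nu^y\,d\sigma(y)$, with measurability in $y$ from the same $\pi$-$\lambda$ argument on rectangles and concentration on $E_{\mathcal F}$ from $\mathcal L_y\times\mathcal L_y\subset E_{\mathcal F}$.
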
 
	
	\begin{proof}
			
		Assume that $\Pi_{\mathcal F}(\mu,\nu)\neq\varnothing$. Let $\gamma \in \Pi_{\mathcal F}(\mu,\nu)$. Then, by definition of foliated transport plan, for $\gamma$-almost every $(x,x')\in X\times X$, there exists some $y\in X^*$ such that $x \in \mathcal{L}_y$ and $x'\in \mathcal{L}_y$. Moreover, note that if $x, x' \in \mathcal{L}_y$, then $\pi(x) = \pi(x')$. Hence $\pi(x) = \pi(x')$ for $\gamma$-almost every $(x,x')\in X \times X$. Now, let $\varphi:X^* \to \mathbb R$ be any bounded measurable function. Note that
		\begin{equation*}
			\int_{X^*} \varphi(y) ~d((\pi \circ \operatorname{proj}_1)_{*} \gamma)(y) = \int_{X \times X} \varphi(\pi(x)) ~d\gamma(x,x'),
		\end{equation*}
		and
		\begin{equation*}
			\int_{X^*} \varphi(y) ~d((\pi \circ \operatorname{proj}_2)_{*} \gamma)(y) =
			\int_{X \times X} \varphi (\pi(x')) ~d \gamma(x,x').
		\end{equation*}
				
		Since $\pi(x) = \pi(x')$ for $\gamma$-almost every $(x,x')$, we have $\varphi(\pi(x)) = \varphi(\pi(x'))$ for $\gamma$-almost every $(x,x')$, and therefore
		\begin{displaymath}
			\int_{X\times X}\varphi(\pi(x)) ~d\gamma =	\int_{X \times X} \varphi(\pi(x')) ~d\gamma.
		\end{displaymath}
		Thus,
		\begin{displaymath}
			\int_{X^*} \varphi(y) ~d((\pi \circ \operatorname{proj}_1)_{*} \gamma)(y) = \int_{X^*} \varphi(y) ~d((\pi \circ \operatorname{proj}_2)_{*} \gamma)(y).
		\end{displaymath}
		But $(\pi \circ \operatorname{proj}_1)_{*} \gamma = \pi_{*} \mu$ and  $(\pi \circ \operatorname{proj}_2)_{*} \gamma = \pi_{*} \nu$. Hence $\pi_{*}\mu = \pi_{*}\nu$.
		
		\	
		
		Now assume that $\pi_{*}\mu = \pi_{*}\nu = \sigma$. By Theorem \ref{theorem:disintegration}, there exist families of probability measures $\{ \mu^y\}_{y \in X^*}$ and $\{\nu^y\}_{y \in X^*}$ on $X$ such that, for every Borel set $A\subset X$, the maps $y \mapsto \mu^{y}(A)$ and $y \mapsto \nu^{y}(A)$ are Borel and
		\begin{displaymath}
			\mu(A) = \int_{X^*} \mu^y(A) ~d\sigma(y),
			\qquad
			\nu(A) = \int_{X^*} \nu^y(A) ~d\sigma(y),
		\end{displaymath}
		and such that $\mu^y$ and $\nu^y$ are concentrated on $\mathcal{L}_y=\pi^{-1}(\{y\})$ for $\sigma$-almost every $y\in X^*$. Define $\eta^y := \mu^y \otimes \nu^y$. The map $y\mapsto\eta^y(C)$ is Borel for every Borel set $C\subset X\times X$. Indeed, for a Borel rectangle $A\times B$ we have $\eta^y(A\times B)=\mu^y(A)\,\nu^y(B)$, which is Borel in $y$. The class of Borel sets $C\subset X\times X$ for which $y\mapsto\eta^y(C)$ is Borel contains $X\times X$ and is closed under proper differences and increasing countable unions. Since it contains the $\pi$-system of Borel rectangles, Dynkin's $\pi$-$\lambda$ theorem shows that it contains the $\sigma$-algebra generated by the rectangles, which is $\mathcal B(X\times X)$ because $X$ is separable and metrizable. Hence, by monotone convergence,
		\begin{displaymath}
			\eta(C) := \int_{X^*} \eta^y(C) ~d\sigma(y), \qquad C\in\mathcal B(X\times X),
		\end{displaymath}
		defines a probability measure on $X\times X$. Now, we claim that $\eta \in \Pi_{\mathcal F}(\mu,\nu)$. Indeed, let $\varphi: X \to \mathbb R$ be bounded and measurable. Then,
		\begin{align*}
			\int_X \varphi(x) ~d((\operatorname{proj}_1)_*\eta) (x) =& \int_{X \times X} \varphi(x) ~d\eta (x, x') \\
			=& \int_{X^*} \left( \int_{X \times X} \varphi(x) ~d\eta^y(x,x') \right) d\sigma (y).
		\end{align*}
		Since $\eta^y \in \Pi(\mu^y, \nu^y)$, we have
		\begin{displaymath}
			\int_{X \times X} \varphi(x) ~d\eta^y = \int_X \varphi(x) ~d\mu^y. 
		\end{displaymath}
		Therefore,
		\begin{displaymath}
			\int_X \varphi(x) ~d((\operatorname{proj}_1)_*\eta) (x) = \int_{X^*} \left( \int_X \varphi(x) ~d\mu^y \right) d\sigma (y) = \int_X \varphi (x) ~d\mu (x).
		\end{displaymath}
		Thus, we conclude that $(\operatorname{proj}_1)_*\eta = \mu$ and, analogously $(\operatorname{proj}_2)_*\eta = \nu$. Recall that $E_{\mathcal F} = \bigcup_{y\in X^*}(\mathcal{L}_y\times \mathcal{L}_y)$ is a Borel set. Since both $\mu^y$ and $\nu^y$ are concentrated on $\mathcal{L}_y$, we have $\eta^y(\mathcal{L}_y \times \mathcal{L}_y) = \mu^y(\mathcal{L}_y)\nu^y(\mathcal{L}_y) = 1$ for $\sigma$-almost every $y$. 	Note that $E_{\mathcal F}^c \subseteq (\mathcal{L}_y \times \mathcal{L}_y)^c$, and then $\eta^y (E_{\mathcal F}^c) \leq \eta^y \big((\mathcal{L}_y \times \mathcal{L}_y)^c\big) = 0$. Therefore, $\eta^y(E_{\mathcal F}^c) = 0$ for $\sigma$-almost every $y$, and
		\begin{displaymath}
			\eta (E_{\mathcal F}^c) = \int_{X^*} \eta^y(E_{\mathcal F}^c) ~d\sigma(y) = 0.
		\end{displaymath}
		Consequently, $\eta(E_{\mathcal F}) = 1$. Therefore $\eta \in \Pi_ {\mathcal{F}} (\mu, \nu)$.
	\end{proof}

	\
	
	\begin{remark} \label{remark:saturated-sets}
		The condition $\pi_*\mu=\pi_*\nu$ in Proposition \ref{Propositon_foliated_plans} can be expressed without reference to $\pi$. Indeed, a set $A\subset X$ is said to be saturated if it is a union of leaves of $\mathcal F$. On the one hand, for every Borel set $B\subset X^*$, the set $\pi^{-1}(B)$ is a saturated Borel set. On the other hand, let $A\subset X$ be a saturated Borel set. Then $\pi(A)$ and $\pi(X\setminus A)$ are analytic subsets of $X^*$, and they are disjoint because $A$ is saturated. By the Lusin Separation Theorem \cite[Theorem 14.7]{Kec95}, there exists a Borel set $B\subset X^*$ with $\pi(A)\subset B$ and $B\cap\pi(X\setminus A)=\varnothing$, and then $A=\pi^{-1}(B)$. Consequently, $\pi_*\mu=\pi_*\nu$ if and only if $\mu(A)=\nu(A)$ for every saturated Borel set $A\subset X$.
	\end{remark}
	
	\
	
	\begin{proposition} \label{proposition_2}
		Let $X$ be a Riemannian manifold endowed with a Borel foliation $\mathcal{F}$, and $\mu$ and $\nu$ probability measures on $X$. If $\gamma \in \Pi_{\mathcal{F}}(\mu, \nu)$, then $\pi_*\mu=\pi_*\nu=:\sigma$, and there exist a disintegration $\{ \gamma^y \}_{y\in X^*}$ of $\gamma$ with respect to $\pi\circ\operatorname{proj}_1$ and disintegrations $\{\mu^y\}_{y\in X^*}$ and $\{\nu^y\}_{y\in X^*}$ of $\mu$ and $\nu$, respectively, with respect to $\pi$, such that, for $\sigma$-almost every $y \in X^*$, the measure $\gamma^y$ is concentrated on $\mathcal{L}_ y \times \mathcal{L}_ y$, $(\operatorname{proj}_1)_* \gamma^y = \mu^y$ and $(\operatorname{proj}_2)_* \gamma^y = \nu^y$.
	\end{proposition}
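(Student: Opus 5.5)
The equality $\pi_*\mu=\pi_*\nu=:\sigma$ is exactly the first half of Proposition \ref{Propositon_foliated_plans}, so it remains to construct the disintegrations. The plan is to disintegrate $\gamma$ first and then \emph{define} $\mu^y$ and $\nu^y$ as the marginals of $\gamma^y$. We apply Theorem \ref{theorem:disintegration} to the standard Borel space $X\times X$, the Borel map $p:=\pi\circ\operatorname{proj}_1:X\times X\to X^*$ and the measure $\gamma$. Note that $p_*\gamma=\pi_*(\operatorname{proj}_1)_*\gamma=\pi_*\mu=\sigma$. This yields a family $\{\gamma^y\}_{y\in X^*}$ with $\gamma^y(p^{-1}(\{y\}))=\gamma^y(\mathcal L_y\times X)=1$ for $\sigma$-a.e.\ $y$, with Borel dependence on $y$, and with $\gamma=\int\gamma^y\,d\sigma(y)$.

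Next we show that $\gamma^y$ is concentrated on $\mathcal L_y\times\mathcal L_y$. Since $\gamma(E_{\mathcal F}^c)=0$, we have $\int\gamma^y(E_{\mathcal F}^c)\,d\sigma(y)=0$, so $\gamma^y(E_{\mathcal F}^c)=0$ for $\sigma$-a.e.\ $y$. Moreover $(\mathcal L_y\times X)\cap E_{\mathcal F}=\mathcal L_y\times\mathcal L_y$, because $\pi(x)=y$ and $\pi(x')=\pi(x)$ imply $x'\in\mathcal L_y$. Hence $\gamma^y(\mathcal L_y\times\mathcal L_y)=1$ for $\sigma$-a.e.\ $y$.

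We now set $\mu^y:=(\operatorname{proj}_1)_*\gamma^y$ and $\nu^y:=(\operatorname{proj}_2)_*\gamma^y$ and verify that these are disintegrations of $\mu$ and $\nu$ with respect to $\pi$.
\begin{enumerate}[leftmargin=*]
\item \emph{Concentration.} For $\sigma$-a.e.\ $y$, $\mu^y(\mathcal L_y)=\gamma^y(\mathcal L_y\times X)=1$ and $\nu^y(\mathcal L_y)=\gamma^y(X\times\mathcal L_y)\ge\gamma^y(\mathcal L_y\times\mathcal L_y)=1$.
\item \emph{Measurability.} The maps $y\mapsto\mu^y(A)=\gamma^y(A\times X)$ and $y\mapsto\nu^y(B)=\gamma^y(X\times B)$ are Borel, since they are evaluations of $\gamma^y$ on Borel sets.
\item \emph{Integral formula.} $\mu(A)=\gamma(A\times X)=\int\gamma^y(A\times X)\,d\sigma=\int\mu^y(A)\,d\sigma$, and likewise $\nu(B)=\int\nu^y(B)\,d\sigma$.
\end{enumerate}
Thus all three conditions of Theorem \ref{theorem:disintegration} hold, the marginal identities $(\operatorname{proj}_1)_*\gamma^y=\mu^y$ and $(\operatorname{proj}_2)_*\gamma^y=\nu^y$ hold by construction, and by the uniqueness clause these $\mu^y,\nu^y$ agree $\sigma$-a.e.\ with any other disintegrations.

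There is no real obstacle; the only subtle step is the identity $(\mathcal L_y\times X)\cap E_{\mathcal F}=\mathcal L_y\times\mathcal L_y$. It is what transfers the leafwise concentration from the first coordinate to the second, and it is the reason for disintegrating along $\pi\circ\operatorname{proj}_1$ rather than along $\pi\times\pi$.
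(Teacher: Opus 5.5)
Your proposal is correct and follows the paper's proof essentially step by step. Like the paper, you disintegrate $\gamma$ along $\pi\circ\operatorname{proj}_1$, use $(\mathcal L_y\times X)\cap E_{\mathcal F}=\mathcal L_y\times\mathcal L_y$ to get leafwise concentration, and define $\mu^y,\nu^y$ as the marginals of $\gamma^y$, verifying the disintegration properties. Your explicit check that $\nu^y$ is concentrated on $\mathcal L_y$, via $\gamma^y(X\times\mathcal L_y)\ge\gamma^y(\mathcal L_y\times\mathcal L_y)$, fills in what the paper leaves as ``similarly''.
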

	
	\begin{proof}
		Since $\gamma \in \Pi_{\mathcal F}(\mu,\nu)$, by Proposition \ref{Propositon_foliated_plans} we have $\pi_*\mu=\pi_*\nu$. Define $\sigma:=\pi_*\mu=\pi_*\nu$. Now, we apply the disintegration theorem to $\gamma$ with respect	to the measurable map $q:=\pi\circ\operatorname{proj}_1$, where $\pi: X \to X^*$ is the Borel map of Definition \ref{Def-Borel-foliation}. Since $q_*\gamma=\pi_*\mu=\sigma$, there exists a family of probability measures
		$\{\gamma^y\}_{y\in X^*}$ on $X\times X$, with $y\mapsto\gamma^y(C)$ Borel for every Borel set $C\subset X\times X$, such that
		\begin{displaymath}
			\gamma	=	\int_{X^*}\gamma^y\,d\sigma(y)
		\end{displaymath}
		and $\gamma^y\left(q^{-1}(\{y\})\right)=1$ for $\sigma$-almost every $y\in X^*$.
		
		\
		
		\noindent Recall that $E_{\mathcal F} = \bigcup_{y\in X^*} (\mathcal{L}_y\times \mathcal{L}_y)$. Since $\gamma (E_{\mathcal F}) =1$, we have $\gamma^y(E_{\mathcal F})=1$ for $\sigma$-almost every $y\in X^*$. Moreover, for every $y\in X^*$,
		\begin{displaymath}
			q^{-1}(\{y\}) = \left\{(x,x')\in X\times X:\pi(x)=y\right\} = \mathcal L_y\times X,
		\end{displaymath}
		and hence $q^{-1}(\{y\})\cap E_{\mathcal F} = \mathcal L_y\times\mathcal L_y$.
		Thus, $\gamma^y\left(\mathcal L_y\times\mathcal L_y\right)=1$ for $\sigma$-almost every $y\in X^*$. In other words, $\gamma^y$ is concentrated on $\mathcal L_y\times\mathcal L_y$.
		
		\
		
		\noindent Define $\widetilde{\mu}^y := (\operatorname{proj}_1)_*\gamma^y$ and $\widetilde{\nu}^y := (\operatorname{proj}_2)_*\gamma^y$. The family $\{\widetilde{\mu}^y\}_{y\in X^*}$ is a disintegration
		of $\mu$ with respect to $\pi$. Indeed, for every bounded Borel function $\varphi:X\to\mathbb R$, the	disintegration formula and the definition of the pushforward give
		\begin{displaymath}
			\begin{split}
				\int_X\varphi(x)\,d\mu(x)
				&=
				\int_{X\times X}
				\varphi\circ\operatorname{proj}_1(x,x')\,d\gamma(x,x')\\
				&=
				\int_{X^*}
				\left(
				\int_{X\times X}
				\varphi\circ\operatorname{proj}_1(x,x')\,d\gamma^y(x,x')
				\right)
				d\sigma(y)\\
				&=
				\int_{X^*}
				\left(
				\int_X
				\varphi(x)\,d\left((\operatorname{proj}_1)_*\gamma^y\right)(x)
				\right)
				d\sigma(y)\\
				&=
				\int_{X^*}
				\left(
				\int_X\varphi(x)\,d\widetilde{\mu}^y(x)
				\right)
				d\sigma(y).
			\end{split}
		\end{displaymath}
		Therefore,
		\begin{displaymath}
			\mu=\int_{X^*}\widetilde{\mu}^y\,d\sigma(y).
		\end{displaymath}
		Moreover, $\widetilde{\mu}^y$ is concentrated on $\mathcal L_y$
		for $\sigma$-almost every $y$, and $y\longmapsto\widetilde{\mu}^y(A)$ is Borel for every $A\in\mathcal B(X)$, since $\operatorname{proj}_1^{-1}(A)$ belongs to $\mathcal B(X\times X)$ for every $A\in\mathcal B(X)$, and $\{\gamma^y\}_{y \in X^*}$ is a disintegration. Similarly, $\{\widetilde{\nu}^y\}_{y\in X^*}$ is a
		disintegration of $\nu$ with respect to $\pi$.
		
		Setting $\mu^y:=\widetilde{\mu}^y$ and $\nu^y:=\widetilde{\nu}^y$, we obtain disintegrations of $\mu$ and $\nu$ with respect to $\pi$ such that $(\operatorname{proj}_1)_*\gamma^y=\mu^y$ and $(\operatorname{proj}_2)_*\gamma^y=\nu^y$ for every $y\in X^*$. By the essential uniqueness of disintegrations, any other disintegrations of $\mu$ and $\nu$ with respect to $\pi$ coincide with these for $\sigma$-almost every $y\in X^*$.
	\end{proof}
	
	\
	
	Proposition \ref{proposition_2} shows that every foliated transport plan can be decomposed into a family of leafwise transport plans. Thus, a foliated transport plan may be regarded as a family of transport plans acting independently along the leaves,	with the common transverse measure $\sigma$ describing how the total mass is distributed among them. In particular, the condition $\pi_*\mu=\pi_*\nu$ expresses the conservation of mass at the transverse level, while each $\gamma^y$ describes the transportation of the conditional measure $\mu^y$ into the conditional measure $\nu^y$ within the leaf $\mathcal L_y$.
	
	\
	
	For $x,x'\in X$ belonging to the same leaf, let	$d_{\mathcal{L}}(x,x')$ denote the intrinsic Riemannian distance
	between $x$ and $x'$ measured within that leaf. More precisely, if
	$x,x'\in\mathcal L_y$, we set
	\begin{displaymath}
		d_{\mathcal L}(x,x') := \inf \left\{ \int_0^1\|\dot\alpha(t)\|_g\,dt: ~\alpha\in\operatorname{AC}_{x,x'}(\mathcal L_y)\right\},
	\end{displaymath}
	where $\operatorname{AC}_{x,x'}(\mathcal L_y)$ denotes the set of absolutely continuous curves $\alpha:[0,1]\to X$ with $\alpha([0,1])\subset\mathcal L_y$, $\alpha(0)=x$ and $\alpha(1)=x'$. If $x$ and $x'$ belong to different leaves, we set $d_{\mathcal L}(x,x'):=+\infty$.
	
	The curves in this definition are taken in $X$, but nothing changes if they are taken in the leaf with its own manifold structure. Indeed, a leaf meets each foliated chart $U$ in at most countably many plaques. If $\alpha$ is a continuous curve in $X$ with image in a leaf, then on each connected component $J$ of $\alpha^{-1}(U)$ the transverse coordinate $\operatorname{pr}_2\circ\phi\circ\alpha$ is continuous with countable range, hence constant, so that $\alpha(J)$ lies in a single plaque, which is an embedded $C^1$ submanifold of $U$. Consequently, $\alpha$ is continuous for the leaf topology, and it is absolutely continuous as a curve in $X$ if and only if it is absolutely continuous as a curve in the leaf, with the same velocity. In particular, the restriction of $d_{\mathcal L}$ to each leaf is the distance induced by the Riemannian metric of that leaf.
	
	The extended leafwise distance is always Borel, and it is lower semicontinuous when $X$ is complete. To prove this, we use the length functional, defined for $\omega\in C([0,1];X)$ by
	\begin{displaymath}
		\operatorname{Length}(\omega):=\sup\Bigl\{\sum_{i=1}^Nd\bigl(\omega(t_{i-1}),\omega(t_i)\bigr):0=t_0<t_1<\dots<t_N=1\Bigr\}.
	\end{displaymath}
	It is a supremum of continuous functions on $C([0,1];X)$, hence lower semicontinuous with respect to uniform convergence. For absolutely continuous curves $\omega$, it coincides with $\int_0^1\|\dot\omega(t)\|_g\,dt$.
	
	\
	
	\begin{lemma} \label{lemma:dL-Borel-lsc}
		Let $X$ be a Riemannian manifold endowed with a foliation $\mathcal F$ of class $C^1$. Then, one has
		\begin{enumerate}[leftmargin=*]
			\item[(i)] The map $d_{\mathcal L}:X\times X\to[0,+\infty]$ is Borel, and the set $\{d_{\mathcal L}<+\infty\}$ of pairs of points lying in the same leaf is an $F_\sigma$ subset of $X\times X$.
			\item[(ii)] If $X$ is complete, then $d_{\mathcal L}$ is lower semicontinuous, i.e., the set $\{d_{\mathcal L}\leq r\}$ is closed in $X\times X$ for every $r\geq0$.
		\end{enumerate}
	\end{lemma}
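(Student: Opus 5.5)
We prove both parts by writing the sublevel sets $\{d_{\mathcal L}\le r\}$ as projections of sets of curves. For $r\ge0$, let
\begin{displaymath}
K_r:=\bigl\{\omega\in C([0,1];X):\ \omega([0,1])\ \text{lies in a single leaf},\ \operatorname{Length}(\omega)\le r\bigr\}.
\end{displaymath}
By the discussion preceding the lemma, a leafwise absolutely continuous curve has the same length computed in $X$ or in the leaf. A constant-speed reparametrization of a leafwise rectifiable curve is again leafwise and absolutely continuous. Hence
\begin{displaymath}
\{d_{\mathcal L}\le r\}=\bigcap_{r'>r}\operatorname{ev}(K_{r'}),\qquad \operatorname{ev}(\omega)=(\omega(0),\omega(1)).
\end{displaymath}
So everything reduces to understanding $K_r$ and its image under the continuous map $\operatorname{ev}$.

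For (i), we localize. Fix a countable cover of $X$ by foliated charts $\phi_i:U_i\to D^k\times D^\ell$, and shrink them to relatively compact subcharts $V_i\Subset U_i$ whose plaques are images of closed balls $\overline{D}^k_{1/2}\times\{z\}$, $z\in\overline{D}^\ell_{1/2}$. Any leafwise path of finite length is a finite concatenation of plaque segments. Conversely, pairs joined within a single closed plaque by a plaque path of length $\le s$ form a closed, indeed compact, subset $P_{i,s}\subset X\times X$. This uses the $C^1$ dependence of the induced metric on the transverse parameter $z$, so that the plaque distance is continuous on the compact parameter set $\{(a,b,z)\}$.

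We then chain plaques. For a finite word $(i_1,\dots,i_m)$ and rational lengths $s_1,\dots,s_m$, consider the set of pairs $(x_0,x_m)$ for which there exist $x_1,\dots,x_{m-1}$ with $(x_{j-1},x_j)\in P_{i_j,s_j}$. Since the $P$'s are compact, this set is a projection of a compact set, hence compact.

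Taking the union over all words with $\sum s_j\le r'$ and then intersecting over rational $r'>r$ gives $\{d_{\mathcal L}\le r\}$. This set is therefore a countable intersection of $\sigma$-compact sets, hence Borel, so $d_{\mathcal L}$ is Borel. The union over all $r$ shows that $\{d_{\mathcal L}<\infty\}$ is a countable union of compact sets, hence $F_\sigma$. One point needs checking: a shortest plaque path of a minimizing sequence must stay within a single plaque of $V_i$, and every leafwise path can be subdivided so that each piece lies in one $\overline V_{i}$-plaque. This follows from the fact, noted in the paper, that a continuous leafwise curve meets each chart only in single plaques on connected components of its preimage.

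For (ii), assume $X$ is complete and take $(x_n,x_n')\to(x,x')$ with $d_{\mathcal L}(x_n,x_n')\le r$. Choose leafwise constant-speed curves $\omega_n$ with $\operatorname{Length}(\omega_n)\le r+1/n$. These curves are equi-Lipschitz and stay in a bounded, hence by Hopf--Rinow precompact, set. Arzel\`a--Ascoli yields a uniform limit $\omega$, and lower semicontinuity of $\operatorname{Length}$ gives $\operatorname{Length}(\omega)\le r$.

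\emph{The main obstacle} is showing that $\omega$ stays in a single leaf. To do this, cover the compact image of $\omega$ by finitely many foliated charts and use a Lebesgue number to split $[0,1]$ into subintervals that $\omega$, and hence $\omega_n$ for large $n$, maps into a single chart. On each such interval $\omega_n$ lies in one plaque, so its transverse coordinate $z_n$ is constant. Passing to the limit, the transverse coordinate of $\omega$ is also constant, so $\omega$ restricted to that interval lies in one plaque. Consecutive pieces share endpoints, so $\omega$ is leafwise. It is also Lipschitz, hence absolutely continuous, and so $d_{\mathcal L}(x,x')\le r$.
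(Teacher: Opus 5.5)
Your proof is correct. Part (ii) is essentially the paper's argument: Arzel\`a--Ascoli for constant-speed leafwise curves in a compact ball, constancy of the transverse coordinate passing to the uniform limit, and lower semicontinuity of $\operatorname{Length}$.

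For part (i) you take a different route. The paper builds no separate local machinery. It runs the same compactness argument as in (ii), but for leafwise curves confined to a compact set $K$, and shows that $\{d_K\le r\}$ is closed, hence compact. It then writes $d_{\mathcal L}=\inf_j d_{K_j}$ for an exhaustion of $X$ by compacts, so that $\{d_{\mathcal L}<r\}=\bigcup_{j,m}\{d_{K_j}\le r-1/m\}$ is $\sigma$-compact, with no completeness needed.

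You instead chain closed plaques, and you get compactness of each $P_{i,s}$ from continuity of the plaque distance $(a,b,z)\mapsto\delta_z(a,b)$ on a compact parameter set. This gives an explicit description of the sublevel sets through finite plaque chains. The cost is an extra analytic lemma that the paper avoids, since it only uses the ambient length functional, whose lower semicontinuity is automatic.

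That lemma does hold, but not quite for the reason you give. With $C^1$ foliated charts, the pulled-back plaque metric, with coefficients $g(\partial_{a_i}\phi^{-1},\partial_{a_j}\phi^{-1})$, is only continuous in $(a,z)$, not $C^1$. Continuity plus uniform ellipticity on the compact parameter set give comparability of the metrics for $z'$ near $z$ (a factor $1+\varepsilon$) and a Lipschitz bound in $(a,b)$, and that is all you need.

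In this coordinate formulation, compactness of $P_{i,s}$ is immediate, so your worry about minimizing sequences leaving a plaque does not arise. The countable-range argument is needed only to subdivide a leafwise curve into pieces lying in single plaques of the $V_i$.

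The paper's route is more economical. One compactness lemma yields both (i) and (ii), and the same closedness of leafwise curves under uniform limits is reused later for the Borel selection of leafwise geodesics.
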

	
	\begin{proof}
		For a compact set $K\subset X$ and $x,x'\in X$, let $d_K(x,x')$ be the infimum of $\int_0^1\|\dot\alpha(t)\|_g\,dt$ over all absolutely continuous curves $\alpha:[0,1]\to K$ with image in a single leaf, $\alpha(0)=x$ and $\alpha(1)=x'$, with $d_K(x,x')=+\infty$ if there is no such curve. We first show that the set $\{d_K\leq r\}$ is closed for every nonnegative $r$. Let $(x_n,x_n')\to(x,x')$ with $d_K(x_n,x_n')\leq r$. Choose absolutely continuous curves $c_n:[0,1]\to K$ from $x_n$ to $x_n'$, each with image in a single leaf and of length at most $r+1/n$, parametrized proportionally to arc length. Since the distance $d$ between two points is bounded above by the length of any curve joining them, the curves $c_n$ are $(r+1)$-Lipschitz with respect to $d$. By the Arzel\`a-Ascoli theorem, a subsequence, still denoted by $(c_n)$, converges uniformly to an $(r+1)$-Lipschitz curve $c:[0,1]\to K$ with $c(0)=x$ and $c(1)=x'$. Cover $[0,1]$ by finitely many closed intervals $J$, consecutive ones overlapping, such that each $c(J)$ is contained in the domain $U$ of a foliated chart $\phi$. Fix such an interval $J$. Since $c(J)$ is compact, $c_n(J)\subset U$ for $n$ large, and then, by the discussion following the definition of $d_{\mathcal L}$, the transverse coordinate $\operatorname{pr}_2\circ\phi\circ c_n$ is constant on $J$. Its uniform limit $\operatorname{pr}_2\circ\phi\circ c$ is therefore constant on $J$, so $c(J)$ lies in a single plaque. Since consecutive intervals overlap, the image of $c$ lies in a single leaf. Moreover, $c$ is absolutely continuous, being Lipschitz. By the lower semicontinuity of the length functional,
		\begin{displaymath}
			d_K(x,x')\leq ~ \operatorname{Length}(c) ~ \leq ~ \liminf_{n\to\infty} ~ \operatorname{Length}(c_n) ~ \leq r.
		\end{displaymath}
		
		\noindent (i) Let $K_1\subset K_2\subset\cdots$ be compact subsets of $X$ with $K_j\subset\operatorname{int}K_{j+1}$ and $\bigcup_jK_j=X$. The image of every curve in the definition of $d_{\mathcal L}$ is compact, hence contained in some $K_j$. Therefore $d_{\mathcal L}=\inf_jd_{K_j}$, and, for every $r>0$,
		\begin{displaymath}
			\{d_{\mathcal L}<r\}=\bigcup_{j\in\N}\bigcup_{m\in\N}\bigl\{d_{K_j}\leq r-1/m\bigr\}
		\end{displaymath}
		is an $F_\sigma$ set. Hence $d_{\mathcal L}$ is Borel, and $\{d_{\mathcal L}<+\infty\}=\bigcup_{n\in\N}\{d_{\mathcal L}<n\}$ is an $F_\sigma$ set.
		
		\
		
		\noindent (ii) Assume that $X$ is complete, so that closed balls in $X$ are compact by the Hopf-Rinow theorem. Let $(x_n,x_n')\to(x,x')$ with $d_{\mathcal L}(x_n,x_n')\leq r$, and let $K$ be the closed ball of radius $r+2$ centered at $x$. For $n$ large, $d(x_n,x)<1$, so every curve starting at $x_n$ with length at most $r+1$ is contained in $K$. Hence $d_K(x_n,x_n')\leq r$ for $n$ large, and the closedness of $\{d_K\leq r\}$ gives $d_{\mathcal L}(x,x')\leq d_K(x,x')\leq r$.
	\end{proof}

	\
	
	\begin{remark} \label{remark:dL-completeness}
		Lemma \ref{lemma:dL-Borel-lsc} does not use Definition \ref{Def-Borel-foliation}, so it also applies to foliations that are not Borel, such as the linear foliation of Remark \ref{remark:scope-Borel-foliation}. For a Borel foliation, $\{d_{\mathcal L}<+\infty\}=E_{\mathcal F}$, since leaves are connected. Completeness cannot be dropped in item (ii). Indeed, for the foliation of $X=\mathbb R^2\setminus\{0\}$ by horizontal lines, $d_{\mathcal L}\bigl((-1,1/n),(1,1/n)\bigr)=2$ for every $n$, whereas $d_{\mathcal L}\bigl((-1,0),(1,0)\bigr)=+\infty$, because $(-1,0)$ and $(1,0)$ lie in different leaves.
	\end{remark}

	\
	
	In particular, the integrals of $d_{\mathcal L}^2$ against Borel probability measures on $X\times X$ appearing below are well defined.
	
	\
	
	\begin{definition}[Foliated quadratic total cost]
		Let $X$ be a Riemannian manifold endowed with a Borel foliation $\mathcal{F}$, and $\mu, \nu \in \mathscr{P}(X)$. The foliated quadratic transport cost between $\mu$ and $\nu$ is defined by
		\begin{displaymath}
			W_{2,\mathcal F}^2(\mu,\nu)
			:=
			\inf_{\gamma\in\Pi_{\mathcal F}(\mu,\nu)}
			~ \int_{X\times X}
			d_{\mathcal L}^2 (x,x') ~d\gamma(x,x').
		\end{displaymath}
		As usual, we adopt the convention that $\inf\varnothing=+\infty$.
	\end{definition}
	
	\
	
	By Proposition \ref{Propositon_foliated_plans}, the set $\Pi_{\mathcal F}(\mu,\nu)$ is nonempty precisely when the two measures have the same transverse projection, that is, $\pi_*\mu=\pi_*\nu$. Hence, $W_{2,\mathcal F}^2 (\mu,\nu)=+\infty$,	whenever $\pi_*\mu\neq\pi_*\nu$. When $\pi_*\mu=\pi_*\nu$, the foliated cost admits a leafwise decomposition. This is Theorem \ref{Theorem-2} below, which together with Proposition \ref{Propositon_foliated_plans} and Corollary \ref{corollary:existence-optimal-plans} proves Theorem \ref{maintheorem:A}. To prove it, we need the following lemma on the measurable selection of leafwise transport plans.
	
	\
	
	\begin{lemma} \label{lemma-measurable-leafwise-selection}
		
		Let $X$ be a Riemannian manifold endowed with a Borel foliation $\mathcal F$, and let $\mu$ and $\nu\in\mathscr P(X)$. Given $\gamma \in \Pi_{\mathcal{F}}(\mu, \nu)$, consider $\sigma := \pi_* \mu = \pi_* \nu$, and disintegrations $\{\mu^y\}$ and $\{\nu^y\}$ of $\mu$ and $\nu$, respectively, according to Proposition \ref{proposition_2}. Let $c:X\times X\longrightarrow[0,+\infty]$	be a nonnegative Borel function, and define
		\begin{displaymath}
			m(y)
			:=
			\inf_{\eta \in\Pi(\mu^y,\nu^y)}
			\int_{X\times X}c(x,x') ~d\eta(x,x').
		\end{displaymath}
		Then $m$ is measurable with	respect to the completion of $\sigma$. Moreover, for every $\varepsilon>0$, there exists a family $\{\gamma_\varepsilon^y\}_{y\in X^*}$, measurable with respect to the completion of $\sigma$, such that $\gamma_\varepsilon^y\in\Pi(\mu^y,\nu^y)$ and 
		\begin{displaymath}
			\int_{X\times X}c(x,x') ~d\gamma_\varepsilon^y(x,x')
			\leq
			m(y)+\varepsilon
		\end{displaymath}
		for $\sigma$-almost every $y\in X^*$. In addition, if
		\begin{displaymath}
			\gamma_\varepsilon	:= \int_{X^*}\gamma_\varepsilon^y\,d\sigma(y),
		\end{displaymath}
		then $\gamma_\varepsilon \in \Pi_{\mathcal{F}}(\mu, \nu)$.
	\end{lemma}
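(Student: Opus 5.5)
The plan is to recast the problem as a measurable selection problem on Polish spaces. I will use the Jankov--von Neumann uniformization theorem together with the measurability of analytic sets with respect to completed measures.

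\emph{The spaces and maps.} Equip $\mathscr P(X)$ and $\mathscr P(X\times X)$ with the weak topology; both are Polish since $X$ is. The map $y\mapsto(\mu^y,\nu^y)$ from $X^*$ to $\mathscr P(X)^2$ is Borel, because $y\mapsto\mu^y(A)$ is Borel for every Borel $A$ and the Borel $\sigma$-algebra of $\mathscr P(X)$ is generated by the evaluations $\lambda\mapsto\lambda(A)$. The functional $C(\eta):=\int c\,d\eta$ on $\mathscr P(X\times X)$ is Borel with values in $[0,+\infty]$. To see this, the class of bounded Borel $f$ for which $\eta\mapsto\int f\,d\eta$ is Borel contains indicators of Borel sets, so it contains all bounded Borel functions by a monotone class argument. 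Monotone convergence applied to $\min(c,n)$ then handles $c$ itself.

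\emph{The key sets.} Define the Borel set
\begin{displaymath}
G:=\{(y,\eta)\in X^*\times\mathscr P(X\times X):(\operatorname{proj}_1)_*\eta=\mu^y,\ (\operatorname{proj}_2)_*\eta=\nu^y\}.
\end{displaymath}
For each $t\in\mathbb R$ we have $\{m<t\}=\operatorname{proj}_{X^*}\bigl(G\cap\{C(\eta)<t\}\bigr)$. This is a projection of a Borel set, hence analytic, hence universally measurable. It follows that $m$ is measurable for the completion of $\sigma$.

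\emph{The $\varepsilon$-optimal selection.} Consider
\begin{displaymath}
G_\varepsilon:=\{(y,\eta)\in G: C(\eta)\le m(y)+\varepsilon\}.
\end{displaymath}
Each section of $G_\varepsilon$ is nonempty: the product measure $\mu^y\otimes\nu^y$ lies in $G_y$, and if $m(y)=+\infty$ every element of $G_y$ qualifies, while if $m(y)<\infty$ an $\varepsilon$-minimizer exists. The set $G_\varepsilon$ is not Borel, since $m$ is only universally measurable. I handle this by choosing a Borel function $\tilde m$ with $\tilde m=m$ $\sigma$-a.e. and, without loss of generality, $\tilde m\ge m$ everywhere; this is possible by modifying $\tilde m$ on a $\sigma$-null Borel set containing the bad points. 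I then replace $G_\varepsilon$ with the Borel set $\tilde G_\varepsilon=\{(y,\eta)\in G:C(\eta)\le\tilde m(y)+\varepsilon\}$, where $+\infty\le+\infty$ is allowed. Its projection is all of $X^*$. The Jankov--von Neumann theorem gives a selection $y\mapsto\gamma^y_\varepsilon$ that is $\sigma(\Sigma^1_1)$-measurable, hence universally measurable, hence measurable for the completed $\sigma$. Off a null set, $\gamma_\varepsilon^y\in\Pi(\mu^y,\nu^y)$ and $C(\gamma^y_\varepsilon)\le m(y)+\varepsilon$. This step is where I expect the main obstacle, namely the careful bookkeeping between Borel and universally measurable objects, including the case $m=+\infty$. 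If needed, I can instead modify the selection on a null set to make it Borel.

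\emph{The glued plan.} Finally, $y\mapsto\gamma^y_\varepsilon(E)$ is measurable for the completed $\sigma$ for every Borel $E$, so $\gamma_\varepsilon(E):=\int\gamma^y_\varepsilon(E)\,d\sigma$ defines a probability measure by monotone convergence. Its marginals are $\int\mu^y\,d\sigma=\mu$ and $\int\nu^y\,d\sigma=\nu$, exactly as in Proposition \ref{Propositon_foliated_plans}. For $\sigma$-a.e. $y$, $\mu^y$ and $\nu^y$ are concentrated on $\mathcal L_y$, so every $\eta\in\Pi(\mu^y,\nu^y)$ satisfies $\eta(\mathcal L_y\times\mathcal L_y)=1$. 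Hence $\gamma^y_\varepsilon(E_{\mathcal F}^c)=0$ for a.e. $y$, which gives $\gamma_\varepsilon(E_{\mathcal F})=1$ and therefore $\gamma_\varepsilon\in\Pi_{\mathcal F}(\mu,\nu)$.
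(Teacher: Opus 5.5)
Your proof is correct and follows essentially the same route as the paper. Both arguments use the Borel graph $\{(y,\eta):\eta\in\Pi(\mu^y,\nu^y)\}$ in $X^*\times\mathscr P(X\times X)$, a Borel cost functional, universal measurability of $m$ via analytic sets, a universally measurable $\varepsilon$-optimal selection, and the same gluing argument. The only difference is in the selection step: the paper cites Bertsekas--Shreve (Propositions 7.47 and 7.50), whereas you check directly that $\{m<t\}$ is analytic and apply Jankov--von Neumann after replacing $m$ by a Borel $\tilde m\geq m$ that agrees with $m$ off a $\sigma$-null set. That gives $\varepsilon$-optimality only $\sigma$-almost everywhere, which is exactly what the lemma requires.
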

	
	\begin{proof}
		First, note that since $X$ is a standard Borel space, the spaces	$\mathscr P(X)$ and $\mathscr P(X\times X)$, endowed with their	canonical Borel structures, are also standard Borel spaces \cite[Section 17.E]{Kec95}. 
		
		\
		
		\noindent Consider $\Gamma(y):=\Pi(\mu^y,\nu^y) \subset\mathscr P(X\times X)$. Note that the maps $\eta \mapsto (\operatorname{proj}_1)_*\eta$ and $\eta \mapsto (\operatorname{proj}_2)_*\eta$, for $\eta \in \mathscr{P}( X \times X)$ are Borel. Moreover, the maps $	y \mapsto \mu^y$ and $y \mapsto\nu^y$	are Borel, since $y\mapsto\mu^y(A)$ and $y\mapsto\nu^y(A)$ are Borel for every Borel set $A\subset X$, by Proposition \ref{proposition_2}, and the Borel structure of $\mathscr P(X)$ is generated by the maps $\lambda\mapsto\lambda(A)$ \cite[Theorem 17.24]{Kec95}. Consequently, the graph of $\Gamma$, $\operatorname{Gr}(\Gamma) :=	\bigl\{(y,\eta)\in X^*\times\mathscr P(X\times X):
		\eta\in\Pi(\mu^y,\nu^y)\bigr\}$ is Borel.
		
		\
		
		\noindent Define the Borel measurable map $I:\mathscr P(X\times X)\longrightarrow[0,+\infty]$ by
		\begin{displaymath}
			I(\eta)	:=	\int_{X\times X}c(x,x')\,d\eta(x,x').
		\end{displaymath}
		Then, we have $m(y) = \inf_{\eta\in\Gamma(y)}I(\eta)$. By \cite[Proposition 7.47]{BS78}, $m$ is lower semianalytic. Since every analytic subset	of a standard Borel space is universally measurable, every lower	semianalytic function is universally measurable. In particular,
		$m$ is measurable with respect to the completion of $\sigma$. Moreover, for every
		$\varepsilon>0$, by \cite[Proposition 7.50]{BS78} there exists a universally measurable selection $y\mapsto\gamma_\varepsilon^y \in \Gamma(y)$ satisfying $I(\gamma_\varepsilon^y)\leq m(y)+\varepsilon$, whenever $m(y)<+\infty$. For every Borel set $A\subset X\times X$, the map $y\mapsto\gamma_\varepsilon^y(A)$ is universally measurable, being the composition of this selection with the Borel map $\eta\mapsto\eta(A)$, and hence it is measurable with respect to the completion of $\sigma$. Define
		\begin{displaymath}
			\gamma_\varepsilon(A) := \int_{X^*}\gamma_\varepsilon^y(A)\,d\sigma(y)
		\end{displaymath}
		for every Borel set $A\subset X\times X$. For every bounded Borel function	$f:X\to\mathbb R$, we have
		\begin{align*}
			\int_X f(x) \,d(\operatorname{proj}_1)_*\gamma_\varepsilon(x)
			&=
			\int_{X\times X}f(x)\,d\gamma_\varepsilon(x,x')\\
			&=
			\int_{X^*}
			\left(
			\int_{X\times X}f(x)\,d\gamma_\varepsilon^y(x,x')
			\right)d\sigma(y)\\
			&=
			\int_{X^*}
			\left(
			\int_Xf(x)\,d\mu^y(x)
			\right)d\sigma(y).
		\end{align*}
		Therefore,
		\begin{displaymath}
			(\operatorname{proj}_1)_*\gamma_\varepsilon	=
			\int_{X^*}\mu^y\,d\sigma(y).
		\end{displaymath}
		The same argument gives
		\begin{displaymath}
			(\operatorname{proj}_2)_*\gamma_\varepsilon
			=
			\int_{X^*}\nu^y\,d\sigma(y).
		\end{displaymath}
		For $\sigma$-almost every $y$, both marginals of $\gamma_\varepsilon^y$ are concentrated on $\pi^{-1}(\{y\})$. Hence, $\gamma_\varepsilon^y \bigl(\pi^{-1}(\{y\})\times\pi^{-1}(\{y\}) \bigr) =	1$, and then $	\gamma_\varepsilon^y(\bigl\{(x,x')\in X\times X:\pi(x)=\pi(x')\bigr\})=1$	for $\sigma$-almost every $y$. Therefore, $\gamma_\varepsilon \in \Pi_{\mathcal{F}} (\mu, \nu)$.
	\end{proof}
	
	\
	
	\begin{theorem}\label{Theorem-2}
		Let $X$ be a Riemannian manifold endowed with a Borel foliation $\mathcal F$, and let $\mu,\nu\in\mathscr P(X)$ satisfy $\pi_*\mu=\pi_*\nu=:\sigma$. Let $\{\mu^y\}_{y\in X^*}$ and $\{\nu^y\}_{y\in X^*}$ be disintegrations of $\mu$ and $\nu$, respectively, with respect to $\pi$. Then
		\begin{displaymath}
			W_{2,\mathcal F}^{2}(\mu,\nu)
			=
			\int_{X^*}
			W_{2,\mathcal L_y}^{2}(\mu^y,\nu^y)\,d\sigma(y),
		\end{displaymath}
		where
		\begin{displaymath}
			W_{2,\mathcal L_y}^{2}(\mu^y,\nu^y)
			:=
			\inf_{\eta \in\Pi(\mu^y,\nu^y)}
			\int_{\mathcal L_y\times\mathcal L_y}
			d_{\mathcal L}^2(x,x') ~d\eta(x,x').
		\end{displaymath}
	\end{theorem}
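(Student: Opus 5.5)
We prove the two inequalities separately, using Proposition \ref{proposition_2} for the lower bound and Lemma \ref{lemma-measurable-leafwise-selection} with $c=d_{\mathcal L}^2$ for the upper bound. The cost $d_{\mathcal L}^2$ is Borel and $[0,+\infty]$-valued by Lemma \ref{lemma:dL-Borel-lsc}(i), so all integrals below make sense. By the same lemma, the function $m(y)=W_{2,\mathcal L_y}^2(\mu^y,\nu^y)$ is measurable with respect to the completion of $\sigma$, so the right-hand side is a well-defined element of $[0,+\infty]$.

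Disintegrations are only unique up to $\sigma$-null sets, and modifying $\mu^y,\nu^y$ on a null set changes $m$ only on a null set. We may therefore assume the given disintegrations are the ones produced by Proposition \ref{proposition_2} from some fixed plan. Such a plan exists: since $\pi_*\mu=\pi_*\nu$, Proposition \ref{Propositon_foliated_plans} gives $\Pi_{\mathcal F}(\mu,\nu)\neq\varnothing$.

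\emph{Lower bound.} Let $\gamma\in\Pi_{\mathcal F}(\mu,\nu)$ be arbitrary. By Proposition \ref{proposition_2}, $\gamma=\int\gamma^y\,d\sigma(y)$ with $\gamma^y\in\Pi(\tilde\mu^y,\tilde\nu^y)$, where $\{\tilde\mu^y\}$ and $\{\tilde\nu^y\}$ are disintegrations of $\mu$ and $\nu$. By essential uniqueness, $\tilde\mu^y=\mu^y$ and $\tilde\nu^y=\nu^y$ for $\sigma$-a.e.\ $y$. The disintegration formula extends from sets to nonnegative Borel functions by monotone convergence, so
\begin{displaymath}
\int d_{\mathcal L}^2\,d\gamma=\int_{X^*}\Bigl(\int d_{\mathcal L}^2\,d\gamma^y\Bigr)d\sigma(y)\ \ge\ \int_{X^*}W_{2,\mathcal L_y}^2(\mu^y,\nu^y)\,d\sigma(y).
\end{displaymath}
Since $\gamma^y$ is concentrated on $\mathcal L_y\times\mathcal L_y$, the inner integral over $X\times X$ coincides with the one over $\mathcal L_y\times\mathcal L_y$ used in the definition of $W_{2,\mathcal L_y}^2$. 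For the same reason, in the definition of $m(y)$ the integral over $X\times X$ agrees with the integral over $\mathcal L_y\times\mathcal L_y$ for a.e.\ $y$. Taking the infimum over $\gamma$ gives $W_{2,\mathcal F}^2(\mu,\nu)\ge\int m\,d\sigma$.

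\emph{Upper bound.} If $\int m\,d\sigma=+\infty$ there is nothing to prove, so assume it is finite; then $m(y)<\infty$ for $\sigma$-a.e.\ $y$. Fix $\varepsilon>0$ and take the family $\{\gamma^y_\varepsilon\}$ and the plan $\gamma_\varepsilon\in\Pi_{\mathcal F}(\mu,\nu)$ from Lemma \ref{lemma-measurable-leafwise-selection}. We need
\begin{displaymath}
\int d_{\mathcal L}^2\,d\gamma_\varepsilon=\int_{X^*}\int d_{\mathcal L}^2\,d\gamma_\varepsilon^y\,d\sigma(y).
\end{displaymath}
This holds for indicator functions by the definition of $\gamma_\varepsilon$, and extends to the nonnegative Borel function $d_{\mathcal L}^2$ by simple-function approximation and monotone convergence. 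The map $y\mapsto\int f\,d\gamma_\varepsilon^y$ is measurable for the completed $\sigma$ at each step, since monotone limits preserve completion-measurability. Hence
\begin{displaymath}
W_{2,\mathcal F}^2(\mu,\nu)\le\int d_{\mathcal L}^2\,d\gamma_\varepsilon\le\int(m+\varepsilon)\,d\sigma=\int m\,d\sigma+\varepsilon,
\end{displaymath}
and we let $\varepsilon\to0$.

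The main obstacle is the measurability bookkeeping in the upper bound. The selection $y\mapsto\gamma^y_\varepsilon$ is only universally measurable, and the $\varepsilon$-optimality is guaranteed only where $m(y)<\infty$. Both issues are handled by working with the completion of $\sigma$ throughout and by first reducing to the case $\int m\,d\sigma<\infty$. The lower bound is essentially formal once Proposition \ref{proposition_2} and essential uniqueness of disintegrations are invoked.
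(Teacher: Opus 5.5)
Your proof is correct and follows the paper's argument essentially step for step. The lower bound uses Proposition \ref{proposition_2} together with essential uniqueness of disintegrations, and the upper bound uses the $\varepsilon$-optimal measurable selection of Lemma \ref{lemma-measurable-leafwise-selection} with $c=d_{\mathcal L}^2$; your extra bookkeeping (extending the disintegration formulas to nonnegative functions and working with the completion of $\sigma$) spells out what the paper leaves implicit. One small slip: the completion-measurability of $m$ comes from Lemma \ref{lemma-measurable-leafwise-selection}, not from Lemma \ref{lemma:dL-Borel-lsc} as your phrase ``by the same lemma'' suggests, since the latter only supplies the Borel measurability of the cost $d_{\mathcal L}^2$ that the former requires.
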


	\
	
	Here the integrand is measurable with respect to the completion of $\sigma$, by Lemma \ref{lemma-measurable-leafwise-selection} and the essential uniqueness of disintegrations, and both sides may be equal to $+\infty$.
	
	\
	
	\begin{proof}
		By Proposition \ref{Propositon_foliated_plans}, the set $\Pi_{\mathcal F}(\mu,\nu)$ is nonempty. Let $\gamma\in\Pi_{\mathcal F}(\mu,\nu)$. By Proposition \ref{proposition_2} and the essential uniqueness of disintegrations, there is a disintegration $\{\gamma^y\}_{y\in X^*}$ of $\gamma$ with respect to $\pi\circ\operatorname{proj}_1$ such that the marginals of $\gamma^y$ are $\mu^y$ and $\nu^y$ for $\sigma$-almost every $y$. Therefore, we have $\gamma^y\in\Pi(\mu^y,\nu^y)$ for $\sigma$-almost every $y$. Moreover,
		\begin{displaymath}
			\int_{X\times X}
			d_{\mathcal L}^2(x,x') ~d\gamma(x,x')
			=
			\int_{X^*}
			\left(
			\int_{\mathcal L_y\times\mathcal L_y}
			d_{\mathcal L}^2 (x,x') ~d\gamma^y(x,x')
			\right)d\sigma(y).
		\end{displaymath}
		
		\
		
		Since $\gamma^y\in\Pi(\mu^y,\nu^y)$, we have
		\begin{displaymath}
			\int_{\mathcal L_y\times\mathcal L_y}
			d_{\mathcal L}^{2}(x,x')\,d\gamma^y(x,x')
			\geq
			W_{2,\mathcal L_y}^{2}(\mu^y,\nu^y)
		\end{displaymath}
		for $\sigma$-almost every $y$. Hence,
		\begin{displaymath}
			\int_{X\times X} d_{\mathcal L}^{2}(x, x') \,d\gamma(x, x')
			\geq
			\int_{X^*}
			W_{2,\mathcal L_y}^{2}(\mu^y,\nu^y)\,d\sigma(y).
		\end{displaymath}
		
		Taking the infimum over
		$\gamma\in\Pi_{\mathcal F}(\mu,\nu)$ gives
		\begin{equation}
			W_{2,\mathcal F}^{2}(\mu,\nu)
			\geq
			\int_{X^*}
			W_{2,\mathcal L_y}^{2}(\mu^y,\nu^y)\,d\sigma(y).
			\label{eq:first-leafwise-inequality}
		\end{equation}
		
		In particular, if $W_{2,\mathcal F}^{2}(\mu,\nu)<+\infty$, then the leafwise cost is finite for $\sigma$-almost every $y$, that is, $W_{2,\mathcal L_y}^{2}(\mu^y,\nu^y)<+\infty$, and it is integrable with respect to $\sigma$.
		
		\
		
		We now prove the reverse inequality. If the right-hand side of \eqref{eq:first-leafwise-inequality} is $+\infty$, there is nothing to prove, so we may assume that it is finite. Fix $\varepsilon>0$. By Lemma \ref{lemma-measurable-leafwise-selection} and the essential uniqueness of disintegrations, there exists a
		family $\{\gamma_\varepsilon^y\}_{y\in X^*}$, measurable with respect to the completion of $\sigma$, such that $\gamma_\varepsilon^y\in\Pi(\mu^y,\nu^y)$ and
		\begin{displaymath}
			\int_{\mathcal L_y\times\mathcal L_y}
			d_{\mathcal L}^{2}(x,x')\,d\gamma_\varepsilon^y(x,x')
			\leq
			W_{2,\mathcal L_y}^{2}(\mu^y,\nu^y)+\varepsilon
		\end{displaymath}
		for $\sigma$-almost every $y$. Moreover, we have $\gamma_\varepsilon \in \Pi_{\mathcal{F}} (\mu, \nu)$, where
		\begin{displaymath}
			\gamma_\varepsilon
			:=
			\int_{X^*}\gamma_\varepsilon^y\,d\sigma(y).
		\end{displaymath}

		It follows that
		\begin{align*}
			W_{2,\mathcal F}^{2}(\mu,\nu)
			&\leq
			\int_{X\times X}
			d_{\mathcal L}^{2}(x,x')\,d\gamma_\varepsilon(x,x')\\
			&=
			\int_{X^*}
			\left(
			\int_{\mathcal L_y\times\mathcal L_y}
			d_{\mathcal L}^{2}(x,x')\,d\gamma_\varepsilon^y(x,x')
			\right)d\sigma(y)\\
			&\leq
			\int_{X^*}
			W_{2,\mathcal L_y}^{2}(\mu^y,\nu^y)\,d\sigma(y)
			+\varepsilon.
		\end{align*}
		
		Since $\varepsilon>0$ is arbitrary, we conclude that
		\begin{equation}
			W_{2,\mathcal F}^{2}(\mu,\nu)
			\leq
			\int_{X^*}
			W_{2,\mathcal L_y}^{2}(\mu^y,\nu^y)\,d\sigma(y).
			\label{eq:second-leafwise-inequality}
		\end{equation}
		
		Combining
		\eqref{eq:first-leafwise-inequality} and
		\eqref{eq:second-leafwise-inequality}, we obtain
		\begin{displaymath}
			W_{2,\mathcal F}^{2}(\mu,\nu)
			=
			\int_{X^*}
			W_{2,\mathcal L_y}^{2}(\mu^y,\nu^y)\,d\sigma(y).
		\end{displaymath}
	\end{proof}
	
	\
	
	Theorem \ref{Theorem-2} shows that the foliated transport problem can be understood as a family of independent optimal transport problems, one on each leaf of $\mathcal F$. Indeed, the global foliated cost is the average, with respect to the common transverse measure $\sigma$, of the optimal costs between the corresponding conditional measures. Since no mass is exchanged between distinct leaves, the only connection among the leafwise problems is their measurable dependence on the transverse parameter $y$.
	
	There is also an equivalent way of encoding the leafwise
	constraint. Recall that the extended leafwise distance was
	defined by setting $d_{\mathcal L}(x,x')=+\infty$ whenever $x$ and $x'$ belong to different leaves. Consequently, instead of restricting the admissible class to foliated transport
	plans, one may minimize the extended cost $d_{\mathcal L}^{2}$ over the entire set $\Pi(\mu,\nu)$ of transport plans. Plans that move a positive amount of mass between different leaves automatically have infinite cost. The following theorem establishes that these two formulations are equivalent.
	
	\
	
	\begin{theorem} \label{theorem:extended-cost-formulation}
		Let $X$ be a Riemannian manifold endowed with a Borel
		foliation $\mathcal F$. Then, for every
		$\mu$, $\nu\in\mathscr P(X)$, we have $W_{2,\mathcal F}^{2}(\mu,\nu)	= \widetilde{W}_2^2 (\mu, \nu)$, where
		\begin{displaymath}
			 \widetilde{W}_2^2 (\mu, \nu) = \inf_{\gamma\in\Pi(\mu,\nu)} \int_{X\times X} d_{\mathcal L}^{2}(x,x')\,d\gamma(x,x')
		\end{displaymath}
		is the classical Kantorovich problem over $\Pi(\mu,\nu)$ with the extended-valued cost $d_{\mathcal L}^{2}$.
	\end{theorem}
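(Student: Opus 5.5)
The plan is to prove the two inequalities separately, using only that $\Pi_{\mathcal F}(\mu,\nu)\subset\Pi(\mu,\nu)$ and that, for a Borel foliation, $\{d_{\mathcal L}<+\infty\}=E_{\mathcal F}$ (Remark \ref{remark:dL-completeness}, using connectedness of leaves). By Lemma \ref{lemma:dL-Borel-lsc}(i), $d_{\mathcal L}^2$ is a nonnegative Borel function on $X\times X$. Hence all the integrals $\int d_{\mathcal L}^2\,d\gamma\in[0,+\infty]$ are well defined for every $\gamma\in\mathscr P(X\times X)$, and both infima make sense with the convention $\inf\varnothing=+\infty$.

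\emph{First inequality.} Since every foliated plan is in particular a transport plan, the infimum defining $\widetilde W_2^2(\mu,\nu)$ runs over a larger set than the one defining $W_{2,\mathcal F}^2(\mu,\nu)$. Hence $\widetilde W_2^2(\mu,\nu)\le W_{2,\mathcal F}^2(\mu,\nu)$. This holds trivially also when $\Pi_{\mathcal F}(\mu,\nu)=\varnothing$, since then the right-hand side is $+\infty$.

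\emph{Reverse inequality.} If $\widetilde W_2^2(\mu,\nu)=+\infty$ there is nothing to prove. Otherwise, take any $\gamma\in\Pi(\mu,\nu)$ with $\int d_{\mathcal L}^2\,d\gamma<+\infty$. A nonnegative function with finite integral is finite $\gamma$-a.e., so $\gamma(\{d_{\mathcal L}=+\infty\})=0$. Therefore $\gamma(E_{\mathcal F})=\gamma(\{d_{\mathcal L}<+\infty\})=1$, that is, $\gamma\in\Pi_{\mathcal F}(\mu,\nu)$. Consequently
\begin{displaymath}
W_{2,\mathcal F}^2(\mu,\nu)\le\int d_{\mathcal L}^2\,d\gamma ,
\end{displaymath}
and taking the infimum over all such $\gamma$ gives $W_{2,\mathcal F}^2(\mu,\nu)\le\widetilde W_2^2(\mu,\nu)$. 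The plans with infinite cost do not affect the infimum, since we are in the case where it is finite.

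The only delicate point is the identification $\{d_{\mathcal L}<+\infty\}=E_{\mathcal F}$. The inclusion $\subset$ holds by definition, because $d_{\mathcal L}=+\infty$ between distinct leaves. For $\supset$, one must check that two points of the same leaf are joined by an absolutely continuous leafwise curve of finite length. This follows since each leaf is a connected $C^1$ manifold, so it is path connected by piecewise $C^1$ curves through plaques. No completeness of $X$ is needed, which matches the statement.
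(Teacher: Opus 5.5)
Your proof is correct and is essentially the paper's argument: since $d_{\mathcal L}=+\infty$ on $E_{\mathcal F}^c$, any plan charging $E_{\mathcal F}^c$ has infinite cost, so the infimum over $\Pi(\mu,\nu)$ reduces to the infimum over $\Pi_{\mathcal F}(\mu,\nu)$. The paper phrases this as a case split on $\gamma$, while you phrase it as an inclusion plus the contrapositive (finite cost implies foliated). Note that only the trivial inclusion $\{d_{\mathcal L}<+\infty\}\subset E_{\mathcal F}$ is used, because $\gamma(E_{\mathcal F})\ge\gamma(\{d_{\mathcal L}<+\infty\})=1$; the connectedness argument for the reverse inclusion is true but not needed, so it is not a delicate point.
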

	
	\begin{proof}
		Recall that $E_{\mathcal F} = \bigl\{(x,x')\in X\times X: \pi(x)=\pi(x') \bigr\}$, and note that $\Pi_{\mathcal F}(\mu,\nu)	= \bigl\{\gamma\in\Pi(\mu,\nu): \gamma(E_{\mathcal F})=1 \bigr\}$. Let $\gamma\in\Pi(\mu,\nu)$. If $\gamma\notin\Pi_{\mathcal F}(\mu,\nu)$, then $\gamma(E_{\mathcal F}^{c})>0$.	By the definition of the extended leafwise distance, $d_{\mathcal L}(x,x')=+\infty$ for every $(x,x')\in E_{\mathcal F}^{c}$. Therefore, $d_{\mathcal L}^{2}=+\infty$ on a set of positive $\gamma$-measure, and hence
		\begin{displaymath}
			\int_{X\times X}
			d_{\mathcal L}^{2}(x,x')\,d\gamma(x,x')
			=
			+\infty.
		\end{displaymath}
		On the other hand, if $\gamma\in\Pi_{\mathcal F}(\mu,\nu)$, then $\gamma$ is concentrated on $E_{\mathcal F}$, and its cost is precisely the cost appearing in the definition of $W_{2,\mathcal F}^{2}(\mu,\nu)$. Consequently, the only plans in
		$\Pi(\mu,\nu)$ that may have finite extended leafwise cost are
		the foliated transport plans. It follows that
		\begin{align*}
			\inf_{\gamma\in\Pi(\mu,\nu)} ~
			\int_{X\times X} d_{\mathcal L}^{2}(x,x')\,d\gamma(x,x')
			&= \inf_{\gamma\in\Pi_{\mathcal F}(\mu,\nu)} ~
			\int_{X\times X} d_{\mathcal L}^{2}(x,x')\,d\gamma(x,x')\\
			&= W_{2,\mathcal F}^{2}(\mu,\nu).
		\end{align*}
		If $\Pi_{\mathcal F}(\mu,\nu)=\varnothing$, then every
		$\gamma\in\Pi(\mu,\nu)$ satisfies $\gamma(E_{\mathcal F}^{c})>0$
		and has infinite extended leafwise cost. Therefore, in this
		case, both sides of the asserted equality are equal to $+\infty$.
	\end{proof}
	
	\
	
	Theorem \ref{theorem:extended-cost-formulation} places the foliated problem within the classical Kantorovich theory. As a first consequence, optimal foliated transport plans exist as soon as the extended leafwise distance is lower semicontinuous, and in particular whenever $X$ is complete, by Lemma \ref{lemma:dL-Borel-lsc}.
	
	\
	
	\begin{corollary} \label{corollary:existence-optimal-plans}
		Let $X$ be a Riemannian manifold endowed with a Borel foliation $\mathcal F$ of class $C^1$, and assume that $X$ is complete or, more generally, that $d_{\mathcal L}:X\times X\to[0,+\infty]$ is lower semicontinuous. Let $\mu,\nu\in\mathscr P(X)$ satisfy $W_{2,\mathcal F}^2(\mu,\nu)<+\infty$. Then there exists $\gamma^\star\in\Pi_{\mathcal F}(\mu,\nu)$ such that
		\begin{displaymath}
			\int_{X\times X}d_{\mathcal L}^2(x,x')\,d\gamma^\star(x,x')=W_{2,\mathcal F}^2(\mu,\nu).
		\end{displaymath}
	\end{corollary}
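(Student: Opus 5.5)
By Theorem \ref{theorem:extended-cost-formulation}, the foliated problem coincides with the classical Kantorovich problem over the full set $\Pi(\mu,\nu)$ with the extended-valued cost $c:=d_{\mathcal L}^2:X\times X\to[0,+\infty]$. The plan is therefore to apply the direct method to $\widetilde W_2^2$ and then check that the minimizer we obtain is foliated.

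\emph{Lower semicontinuity and compactness.} If $X$ is complete, Lemma \ref{lemma:dL-Borel-lsc}(ii) shows that $d_{\mathcal L}$ is lower semicontinuous. Under the more general hypothesis this is assumed directly. Since $t\mapsto t^2$ is continuous and nondecreasing on $[0,+\infty]$, the cost $c$ is lower semicontinuous and nonnegative. Next, $X$ is Polish, so $\mu$ and $\nu$ are tight. It follows that $\Pi(\mu,\nu)$ is tight: given compact sets $K_1,K_2$ with $\mu(K_1^c),\nu(K_2^c)<\varepsilon$, every plan satisfies $\gamma((K_1\times K_2)^c)<2\varepsilon$. Moreover, $\Pi(\mu,\nu)$ is closed under narrow convergence, because the marginal maps are continuous. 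By Prokhorov's theorem, $\Pi(\mu,\nu)$ is therefore narrowly compact.

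\emph{Lower semicontinuity of the functional.} Write $c=\sup_k c_k$, where the $c_k$ are bounded continuous functions increasing to $c$; for instance $c_k(z)=\min\{k,\inf_w(c(w)+k\,\rho(z,w))\}$, with $\rho$ a metric on $X\times X$. Then $I(\gamma)=\int c\,d\gamma=\sup_k\int c_k\,d\gamma$ by monotone convergence. This exhibits $I$ as a supremum of narrowly continuous functionals, so $I$ is narrowly lower semicontinuous.

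\emph{Conclusion.} Take a minimizing sequence $\gamma_n\in\Pi(\mu,\nu)$ with $I(\gamma_n)\to \widetilde W_2^2(\mu,\nu)=W_{2,\mathcal F}^2(\mu,\nu)<+\infty$. Extract a narrowly convergent subsequence with limit $\gamma^\star\in\Pi(\mu,\nu)$. Lower semicontinuity then gives $I(\gamma^\star)\le W_{2,\mathcal F}^2(\mu,\nu)<+\infty$. Finiteness forces $\gamma^\star(\{d_{\mathcal L}=+\infty\})=0$. By Remark \ref{remark:dL-completeness}, $\{d_{\mathcal L}<+\infty\}=E_{\mathcal F}$, so $\gamma^\star\in\Pi_{\mathcal F}(\mu,\nu)$. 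The reverse inequality $I(\gamma^\star)\ge W_{2,\mathcal F}^2(\mu,\nu)$ holds by definition, which yields equality.

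The only delicate step is the lower semicontinuity of $\gamma\mapsto\int c\,d\gamma$ for an extended-valued, unbounded cost. This is handled by the monotone approximation by bounded continuous (Lipschitz) functions described above, which works because $c$ is lower semicontinuous with values in $[0,+\infty]$. Everything else is the standard direct method.
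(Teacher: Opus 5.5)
Your proposal is correct and follows the paper's proof. Both reduce to the extended-cost Kantorovich problem of Theorem \ref{theorem:extended-cost-formulation}, obtain a minimizer over $\Pi(\mu,\nu)$ from the lower semicontinuity of $d_{\mathcal L}^2$, and use finiteness of its cost to get $\gamma^\star(E_{\mathcal F}^c)=0$; the only difference is that the paper cites the classical existence theorem for lower semicontinuous costs (Theorem 4.1 in Villani's book), whereas you carry out that direct-method argument yourself via tightness, Prokhorov and the monotone Lipschitz approximation of the cost.
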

	
	\begin{proof}
		If $X$ is complete, then $d_{\mathcal L}$ is lower semicontinuous by Lemma \ref{lemma:dL-Borel-lsc}. The manifold $X$ is a Polish space, and the cost $d_{\mathcal L}^2:X\times X\to[0,+\infty]$ is lower semicontinuous and nonnegative. By \cite[Theorem 4.1]{Vil09}, applied with the lower bounds $a=b=0$, there exists $\gamma^\star\in\Pi(\mu,\nu)$ minimizing $\int d_{\mathcal L}^2\,d\gamma$ over $\Pi(\mu,\nu)$. By Theorem \ref{theorem:extended-cost-formulation},
		\begin{displaymath}
			\int_{X\times X}d_{\mathcal L}^2(x,x')\,d\gamma^\star(x,x')=\widetilde W_2^2(\mu,\nu)=W_{2,\mathcal F}^2(\mu,\nu)<+\infty.
		\end{displaymath}
		Since $d_{\mathcal L}=+\infty$ on $E_{\mathcal F}^c$, it follows that $\gamma^\star(E_{\mathcal F}^c)=0$, that is, $\gamma^\star\in\Pi_{\mathcal F}(\mu,\nu)$.
	\end{proof}
	
	\
	
	The preceding results provide complementary descriptions of	the foliated transport problem. Theorem \ref{Theorem-2} decomposes its cost into a measurable family of optimal transport problems on the individual leaves, whereas Theorem \ref{theorem:extended-cost-formulation} incorporates the leafwise constraint into an extended-valued cost and recovers the problem as an unconstrained Kantorovich problem. We now illustrate these descriptions in some concrete geometric settings. The first example concerns foliations induced by smooth submersions, for which the leaves are given by the fibers of the corresponding	map and the transverse measure is obtained by pushing the marginals forward to the base space.
	
	\
	
	\begin{example}	\label{example:submersion-foliation}
		Let $X$ be a Riemannian manifold and $B$ a smooth manifold, and let $\pi:X \to B$ be a smooth surjective submersion with connected fibers. The fibers of $\pi$ define a foliation $\mathcal F$ of $X$, whose leaves are $\mathcal L_y:=\pi^{-1}(\{y\})$, $y\in B$. Since every manifold is a standard Borel space and $\pi$ is continuous, $\mathcal F$ is a Borel foliation in the sense of Definition \ref{Def-Borel-foliation}. A simple instance of this construction is obtained by considering the flat torus $X=\mathbb T^2=\mathbb S^1\times\mathbb S^1$, where $\mathbb S^1=\mathbb R/\mathbb Z$, and the projection onto the second coordinate, $\pi:\mathbb T^2 \to \mathbb S^1$ such that $\pi(x,y)=y$.	The leaves of the corresponding foliation are the horizontal circles $\mathcal L_y =	\mathbb S^1\times\{y\}$, $y\in\mathbb S^1$. In this case, the extended leafwise distance is given by
		\begin{displaymath}
			d_{\mathcal L}\bigl((x,y),(x',y')\bigr)=
			\begin{cases}
				d_{\mathbb S^1}(x,x') & \text{if~~}~~~~ y=y',\\
				+\infty & \text{otherwise},
			\end{cases}
		\end{displaymath}
		where $d_{\mathbb S^1}$ denotes the intrinsic distance on $\mathbb S^1$. It is continuous on the closed set $\{y=y'\}$ and equal to $+\infty$ on its complement, hence lower semicontinuous, in accordance with Lemma \ref{lemma:dL-Borel-lsc}. Moreover, $d_{\mathcal L}\leq1/2$ on $E_{\mathcal F}$, so $W_{2,\mathcal F}^2(\mu,\nu)\leq1/4$ whenever $\pi_*\mu=\pi_*\nu$. By Corollary \ref{corollary:existence-optimal-plans}, an optimal foliated transport plan between $\mu$ and $\nu$ exists if and only if $\pi_*\mu=\pi_*\nu$. If $\mu,\nu\in\mathscr P(\mathbb T^2)$ satisfy $\pi_*\mu=\pi_*\nu=:\sigma$, then
		\begin{displaymath}
			\mu=\int_{\mathbb S^1}\mu^y\,d\sigma(y),
			\qquad
			\nu=\int_{\mathbb S^1}\nu^y\,d\sigma(y),
		\end{displaymath}
		and
		\begin{displaymath}
			W_{2,\mathcal F}^{2}(\mu,\nu)
			=
			\int_{\mathbb S^1}
			W_{2,\mathcal L_y}^{2}(\mu^y,\nu^y)\,d\sigma(y).
		\end{displaymath}
		Here, each conditional transport problem takes place on the
		horizontal circle $\mathcal L_y$, and its cost is computed using
		the intrinsic distance on that circle.
	\end{example}
	
	\
	
	The preceding example concerns the regular situation in which the leaves are the fibers of a smooth submersion onto a smooth manifold. The Borel formulation considered here, however, is not restricted to this setting: the transverse space $X^*$ is only required to be a standard Borel space and need not carry a smooth manifold structure. This allows us to include foliations whose leaf spaces are singular from the differentiable viewpoint, provided that they remain regular from the measurable viewpoint, in the sense of Definition \ref{Def-Borel-foliation} (see Remark \ref{remark:scope-Borel-foliation}). The following example illustrates this broader situation through the orbit foliation of a locally free circle action, for which the leaf space is naturally an orbifold rather than a smooth manifold.
	
	\
	
	\begin{example}	\label{example:S3-foliation}
		Let $X=\mathbb S^3 = \bigl\{ (z_1,z_2)\in\mathbb C^2: |z_1|^2+|z_2|^2=1 \bigr\}$	endowed with its standard Riemannian metric. Fix relatively prime integers $p,q\geq 2$, and consider the smooth action of the circle group $\mathbb S^1=\{e^{it}:t\in\mathbb R\}\subset\mathbb C$ on $\mathbb S^3$ defined by $e^{it}\cdot(z_1,z_2) = \bigl(e^{ipt}z_1,e^{iqt}z_2\bigr)$. Since the generating vector field $V(z_1,z_2) = \bigl(ipz_1,iqz_2\bigr)$	does not vanish on $\mathbb S^3$, the orbits of this action define a one-dimensional foliation $\mathcal F$. The leaf through $x\in\mathbb S^3$ is 
		\begin{displaymath}
			\mathcal L_x = \bigl\{e^{it}\cdot x:t\in\mathbb R	\bigr\}.
		\end{displaymath}
		Let $X^*:=\mathbb S^3/\mathbb S^1$	be the orbit space, and let $\pi:\mathbb S^3\to X^*$ such that $\pi(x)=\mathcal L_x$ be the quotient map. Since $\mathbb S^1$ is compact and acts	continuously on $\mathbb S^3$, the orbit space $X^*$ is compact and metrizable. In particular, it is a standard Borel space, and $\pi$ is a Borel map. Therefore, $\mathcal F$ is a Borel foliation. Notice that this foliation is not induced by a submersion onto a smooth manifold. Indeed, the orbits through $(1,0)$ and $(0,1)$ are exceptional, with isotropy groups $\mathbb Z_p$ and $\mathbb Z_q$, respectively, and this is reflected in their holonomy \cite{CC00}. Consider the transverse disc $D:=\bigl\{\bigl(\sqrt{1-|w|^2},w\bigr):w\in\mathbb C,\ |w|<\delta\bigr\}$ to the orbit through $(1,0)$, for small $\delta>0$. The orbit of a point $\bigl(\sqrt{1-|w|^2},w\bigr)\in D$ returns to $D$ for the first time at $t=2\pi/p$, at the point $\bigl(\sqrt{1-|w|^2},e^{2\pi iq/p}w\bigr)$. Hence the holonomy of this leaf is generated by the rotation $w\mapsto e^{2\pi iq/p}w$, which has order $p$ because $p$ and $q$ are relatively prime. Similarly, the orbit through $(0,1)$ has holonomy of order $q$. On the other hand, a submersion defined on the compact manifold $\mathbb S^3$ is proper, so by Ehresmann's theorem \cite{Ehr51} its fibers form a locally trivial fibration, and every leaf of the corresponding foliation has trivial holonomy. The orbit space carries instead the structure of a weighted projective line, or equivalently of a two-dimensional orbifold with two cone points, of orders $p$ and $q$.
		
		\noindent Since $\mathbb S^3$ is compact, $d_{\mathcal L}$ is lower semicontinuous by Lemma \ref{lemma:dL-Borel-lsc}. In this example it can also be computed explicitly. For $x=(z_1,z_2)\in\mathbb S^3$, the moduli $|z_1|$ and $|z_2|$ are invariant under the action, so the curve $t\mapsto e^{it}\cdot x$ has constant speed
		\begin{displaymath}
			c(x):=|V(x)|=\bigl(p^2|z_1|^2+q^2|z_2|^2\bigr)^{1/2}>0.
		\end{displaymath}
		This curve is periodic, with minimal period $T(x)$ equal to $2\pi/p$ on the orbit through $(1,0)$, to $2\pi/q$ on the orbit through $(0,1)$, and to $2\pi$ on every other orbit. Hence $\mathcal L_x$ is a circle of length $c(x)T(x)$, and, if $x'=e^{it_0}\cdot x$, then $d_{\mathcal L}(x,x')=c(x)\operatorname{dist}\bigl(t_0,T(x)\mathbb Z\bigr)$. Since $T(x)$ divides $2\pi$, this distance is attained at some $t\in[-\pi,\pi]$, and therefore
		\begin{displaymath}
			d_{\mathcal L}(x,x')=\min\bigl\{c(x)|t|:t\in[-\pi,\pi],\ e^{it}\cdot x=x'\bigr\},
		\end{displaymath}
		with the convention $\min\varnothing=+\infty$, which covers the case where $x$ and $x'$ lie on different orbits. The set 
		\begin{displaymath}
			\{(x,x',t)\in\mathbb S^3\times\mathbb S^3\times[-\pi,\pi]:e^{it}\cdot x=x'\}
		\end{displaymath}
		is compact and the function $(x,t)\mapsto c(x)|t|$ is continuous, which confirms directly that $d_{\mathcal L}$ is lower semicontinuous on $\mathbb S^3\times\mathbb S^3$.
		
		\noindent Moreover, $c(x)\leq\max\{p,q\}$, so $d_{\mathcal L}\leq\pi\max\{p,q\}$ on $E_{\mathcal F}$, and $W_{2,\mathcal F}^2(\mu,\nu)$ is finite whenever $\pi_*\mu=\pi_*\nu$. By Corollary \ref{corollary:existence-optimal-plans}, an optimal foliated transport plan between $\mu$ and $\nu$ exists if and only if $\pi_*\mu=\pi_*\nu$. Let $\mu,\nu\in\mathscr P(\mathbb S^3)$ satisfy $\pi_*\mu=\pi_*\nu=:\sigma$, and let
		\begin{displaymath}
			\mu=\int_{X^*}\mu^y\,d\sigma(y),
			\qquad
			\nu=\int_{X^*}\nu^y\,d\sigma(y)
		\end{displaymath}
		be their disintegrations with respect to $\pi$. For
		$\sigma$-almost every $y\in X^*$, the measures $\mu^y$ and
		$\nu^y$ are concentrated on the orbit $\mathcal L_y:=\pi^{-1}(\{y\})$. Therefore, Theorem \ref{Theorem-2} yields
		\begin{displaymath}
			W_{2,\mathcal F}^{2}(\mu,\nu)
			=
			\int_{X^*}
			W_{2,\mathcal L_y}^{2}(\mu^y,\nu^y)\,d\sigma(y).
		\end{displaymath}
		Each term on the right-hand side is a one-dimensional optimal
		transport problem on a circular orbit, with cost computed using
		the intrinsic Riemannian distance along that orbit. Thus, the
		global transport problem on $\mathbb S^3$ decomposes into a
		measurable family of transport problems along the orbits of the
		weighted circle action.	Equivalently, by Theorem \ref{theorem:extended-cost-formulation},
		\begin{displaymath}
			W_{2,\mathcal F}^{2}(\mu,\nu)
			=
			\inf_{\gamma\in\Pi(\mu,\nu)}
			\int_{\mathbb S^3\times\mathbb S^3}
			d_{\mathcal L}^{2}(x,x')\,d\gamma(x,x'),
		\end{displaymath}
		where transport between points belonging to distinct
		$\mathbb S^1$-orbits is assigned infinite cost.
	\end{example}

	\
	
\section{Benamou-Brenier formulation} \label{Section:Benamou}
	
		The Benamou-Brenier formulation provides a dynamical interpretation of the quadratic optimal transport problem. Instead of describing transport by a coupling between the initial and final measures, it considers a time-dependent curve of probability measures governed by a continuity equation and minimizes the corresponding kinetic energy. Let $\mu_0,\mu_1\in\mathscr P_2(\mathbb R^d)$. Consider a narrowly continuous curve $[0,1]\ni t\mapsto \mu_t \in \mathscr P_2(\mathbb R^d)$, that is, for every bounded continuous function $\varphi:\mathbb R^d\to\mathbb R$, the map
		\begin{displaymath}
				[0,1]\ni t
				\mapsto
				\int_{\mathbb R^d}\varphi(x)\,d\mu_t(x)
		\end{displaymath}
		is continuous. Let $v:(0,1)\times\mathbb R^d\to\mathbb R^d$, $(t,x)\mapsto v_t(x)$ be a Borel vector field satisfying
		\begin{displaymath}
			\int_0^1\int_{\mathbb R^d} \lVert v_t(x)\rVert^2\,d\mu_t(x)\,dt<+\infty.
		\end{displaymath}
		We say that the pair $(\mu_t,v_t)$ satisfies the continuity equation if
		\begin{equation}\label{eq:classical-continuity-equation}
			\partial_t\mu_t+\operatorname{div}(\mu_tv_t)=0
		\end{equation}
		in the sense of distributions on $(0,1)\times\mathbb R^d$. Equivalently,
		\begin{displaymath}
			\int_0^1\int_{\mathbb R^d} \left( \partial_t\varphi(t,x) 	+ \langle\nabla\varphi(t,x),v_t(x)\rangle \right) d\mu_t(x) ~dt	= 0
		\end{displaymath}
		for every $\varphi\in C_c^\infty((0,1)\times\mathbb R^d)$. Denote by $\operatorname{CE}(\mu_0,\mu_1)$ the collection of all such pairs satisfying the endpoint conditions $\mu_{t=0}=\mu_0$ and $\mu_{t=1}=\mu_1$. The kinetic action of $(\mu_t,v_t)$ is defined by
		\begin{displaymath}
			\mathcal A(\mu,v) := \int_0^1\int_{\mathbb R^d}	\lVert v_t(x)\rVert^2\,d\mu_t(x)\,dt.
		\end{displaymath}
		The Benamou-Brenier formula \cite{BB00} states that
		\begin{equation} \label{eq:classical-benamou-brenier}
			W_2^2(\mu_0,\mu_1) = \inf \left\{ \mathcal A(\mu,v): (\mu_t,v_t)\in \operatorname{CE}(\mu_0,\mu_1) \right\}.
		\end{equation}
		Thus, the quadratic Wasserstein cost is the minimum kinetic energy required to evolve $\mu_0$ into $\mu_1$ through solutions of the continuity equation.
	
		The same formulation extends naturally to a Riemannian manifold $X$. In this case, the Euclidean norm and gradient are replaced by their Riemannian counterparts, and the divergence is understood with respect to the Riemannian structure. 
	
		When $X$ is additionally endowed with a Borel foliation $\mathcal F$, the dynamical formulation must reflect the geometric restriction that mass can move only along the leaves. This is achieved by	requiring the velocity field to satisfy $v_t(x)\in T_x\mathcal L_x$ for $\mu_t\,dt$-almost every $(t,x)$, where $T_x\mathcal L_x$ denotes the tangent space to the leaf through $x$. We now make this foliated version of the continuity equation precise.
		
		For every $x\in X$, let $P_{\mathcal F}(x):T_xX\longrightarrow T_x\mathcal L_x$ denote the orthogonal projection onto the tangent space of the
		leaf through $x$. Given a smooth function $\varphi:X\to\mathbb R$, we define its leafwise gradient by $\nabla_{\mathcal F}\varphi(x) := P_{\mathcal F}(x)\nabla\varphi(x)$. Thus, $\nabla_{\mathcal F}\varphi$ records only the variation of
		$\varphi$ along the leaves of $\mathcal F$.
		
		\
		
		\begin{definition} \label{def:foliated-continuity-equation} Let $[0,1]\ni t\mapsto\mu_t\in\mathscr P_2(X)$ be a narrowly continuous curve, and let $v:(0,1)\times X\to TX$ be a Borel time-dependent vector field satisfying $v_t(x)\in T_x\mathcal L_x$ for $\mu_t\,dt$-almost every $(t,x)\in(0,1)\times X$. We say that $(\mu_t,v_t)$ satisfies the foliated continuity equation if
			\begin{equation} \label{eq:foliated-continuity-equation}
				\partial_t\mu_t+
				\operatorname{div}_{\mathcal F}(\mu_tv_t)=0
			\end{equation}
			in the weak sense, that is, if
			\begin{displaymath}
				\int_0^1\int_X
				\left(
				\partial_t\varphi(t,x)
				+
				\left\langle
				\nabla_{\mathcal F}\varphi(t,x),v_t(x)
				\right\rangle
				\right)
				d\mu_t(x)\,dt
				=
				0
			\end{displaymath}
			for every
			$\varphi\in C_c^\infty((0,1)\times X)$.
		\end{definition}
		
		\		
		
		The leafwise divergence of the vector-valued measure $\mu_tv_t$
		is understood in the distributional sense and it is defined by
		\begin{displaymath}
			\left\langle
			\operatorname{div}_{\mathcal F}(\mu_tv_t),\psi
			\right\rangle
			:=
			-
			\int_X
			\left\langle
			\nabla_{\mathcal F}\psi(x),v_t(x)
			\right\rangle
			d\mu_t(x)
		\end{displaymath}
		for every $\psi\in C_c^\infty(X)$. Since $v_t(x)\in T_x\mathcal L_x$, we have
		\begin{displaymath}
			\left\langle\nabla\varphi(t,x),v_t(x)\right\rangle
			=
			\left\langle
			\nabla_{\mathcal F}\varphi(t,x),v_t(x)
			\right\rangle.
		\end{displaymath}
		Therefore, every solution of the foliated continuity equation is
		also a solution of the usual continuity equation on $X$. The
		distinction lies in the tangential constraint on the velocity
		field, which prevents mass from moving in directions transverse
		to the foliation.
		
		\
		
		The arguments below rely on a representation of solutions of the foliated continuity equation by probability measures on leafwise trajectories. The superposition principle \cite[Theorem 8.2.1]{AGS08} is formulated in $\mathbb R^d$, and we transfer it to $X$ by means of a closed isometric embedding, which exists whenever $X$ is complete \cite{Mul09}. Recall that $\mathcal F$ is of class $C^1$ and that $X$ is covered by $C^1$ foliated charts $\phi:U\to D^k\times D^\ell$ as in Section \ref{Section:Kantorovich}. In particular, the tangent distribution $x\mapsto T_x\mathcal L_x$ is continuous, and the set $\{(x,w)\in TX: w\in T_x\mathcal L_x\}$ is closed in $TX$.
		
		\
		
		\begin{lemma}[Leafwise superposition] \label{lemma:leafwise-superposition}
			Let $X$ be a complete Riemannian manifold, with Riemannian metric $g$, endowed with a Borel foliation $\mathcal F$ of class $C^1$, and let $(\mu_t,v_t)$ satisfy the foliated continuity equation with
			\begin{displaymath}
				\int_0^1\int_X\|v_t(x)\|_g^2\,d\mu_t(x)\,dt<+\infty.
			\end{displaymath}
			Then, there exists $\boldsymbol\eta\in\mathscr P\bigl(C([0,1];X)\bigr)$ such that
			\begin{enumerate}[leftmargin=*]
				\item[(i)] $(e_t)_*\boldsymbol\eta=\mu_t$ for every $t\in[0,1]$, where $e_t(\omega):=\omega(t)$.
				\item[(ii)] $\boldsymbol\eta$ is concentrated on absolutely continuous curves $\omega$ such that, for almost every $t\in(0,1)$,  $\dot\omega(t)=v_t(\omega(t))$ .
				\item[(iii)] For $\boldsymbol\eta$-almost every $\omega$, one has $\dot\omega(t)\in T_{\omega(t)}\mathcal L_{\omega(t)}$ for almost every $t$, and $\omega([0,1])\subset\mathcal L_{\omega(0)}$. In particular, $\pi(\omega(t))=\pi(\omega(0))$ for every $t\in[0,1]$.
			\end{enumerate}
		\end{lemma}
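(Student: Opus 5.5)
The plan is to embed $X$ isometrically as a closed submanifold of some $\mathbb R^N$ via a closed isometric embedding $\iota:X\to\mathbb R^N$ \cite{Mul09}. We then push the problem forward, apply the Euclidean superposition principle \cite[Theorem 8.2.1]{AGS08}, and pull the resulting measure on paths back to $X$. Set $\tilde\mu_t:=\iota_*\mu_t$ and define a Borel field $\tilde v_t$ on $\mathbb R^N$ by $\tilde v_t(\iota(x)):=d\iota_x(v_t(x))$ on $\iota(X)$ and $0$ elsewhere. Since $\iota$ is isometric, $\int_0^1\int|\tilde v_t|^2\,d\tilde\mu_t\,dt$ equals the finite kinetic action, so the $L^2$ (hence $L^1$) integrability hypothesis of \cite[Theorem 8.2.1]{AGS08} holds. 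Narrow continuity of $t\mapsto\tilde\mu_t$ follows from that of $\mu_t$, since $\iota$ is continuous. The continuity equation for $(\tilde\mu_t,\tilde v_t)$ on $(0,1)\times\mathbb R^N$ follows from the remark after Definition \ref{def:foliated-continuity-equation}: $(\mu_t,v_t)$ solves the ordinary continuity equation on $X$. For $\psi\in C_c^\infty((0,1)\times\mathbb R^N)$, the restriction $\psi\circ(\mathrm{id}\times\iota)$ is smooth and compactly supported, because $\iota(X)$ is closed and $\iota$ is proper. Its gradient paired with $v_t$ equals $\langle\nabla\psi,\tilde v_t\rangle$, because $v_t$ is tangent to $X$.

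The superposition principle then yields $\tilde{\boldsymbol\eta}\in\mathscr P(C([0,1];\mathbb R^N))$ with $(e_t)_*\tilde{\boldsymbol\eta}=\tilde\mu_t$ for all $t$, concentrated on absolutely continuous curves solving $\dot{\tilde\omega}=\tilde v_t(\tilde\omega)$ a.e. For each rational $t$ we have $\tilde\omega(t)\in\iota(X)$ for $\tilde{\boldsymbol\eta}$-a.e. $\tilde\omega$. By continuity and closedness of $\iota(X)$, a.e. curve lies in $\iota(X)$ for all $t$. Since $\iota$ is a homeomorphism onto its closed image, $\omega:=\iota^{-1}\circ\tilde\omega$ is continuous, and absolutely continuous in charts, because $\iota$ is a smooth embedding with a smooth local left inverse. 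Its velocity satisfies $d\iota(\dot\omega)=\dot{\tilde\omega}=d\iota(v_t(\omega))$, hence $\dot\omega(t)=v_t(\omega(t))$ a.e. Defining $\boldsymbol\eta$ as the pushforward of $\tilde{\boldsymbol\eta}$ under the Borel map $\tilde\omega\mapsto\iota^{-1}\circ\tilde\omega$ then gives (i) and (ii).

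For (iii), we need the velocity to be leafwise along almost every curve. Apply Fubini to the Borel set $N:=\{(t,x):v_t(x)\notin T_x\mathcal L_x\}$, which is Borel because the tangent bundle of the foliation is closed in $TX$. We have $\int\int_0^1 \mathbbm 1_N(t,\omega(t))\,dt\,d\boldsymbol\eta=\int_0^1\mu_t(N_t)\,dt=0$. Hence for a.e. $\omega$, $\dot\omega(t)=v_t(\omega(t))\in T_{\omega(t)}\mathcal L_{\omega(t)}$ for a.e. $t$.

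The main step is showing that an absolutely continuous curve with a.e. leaf-tangent velocity stays in one leaf. Cover $\omega([0,1])$ by finitely many foliated charts, with consecutive overlapping intervals $J$ as in the proof of Lemma \ref{lemma:dL-Borel-lsc}. On each $J$, write $\phi\circ\omega=(a,b)$. The map $b=\operatorname{pr}_2\circ\phi\circ\omega$ is absolutely continuous, since $\phi$ is $C^1$ and $\omega(J)$ is compact. Its derivative is $\dot b=d(\operatorname{pr}_2\circ\phi)(\dot\omega)=0$ a.e., because $d\phi$ maps $T\mathcal L$ onto $\mathbb R^k\times\{0\}$. Thus $b$ is constant and $\omega(J)$ lies in one plaque. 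Chaining the overlapping intervals gives $\omega([0,1])\subset\mathcal L_{\omega(0)}$, so $\pi(\omega(t))=\pi(\omega(0))$ by Definition \ref{Def-Borel-foliation}. The subtle point is only that absolute continuity of $b$ is needed, rather than mere differentiability a.e., to conclude constancy; this is ensured by composing with the $C^1$ chart on a compact set.
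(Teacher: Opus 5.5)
Your proposal is correct and follows the paper's proof essentially step for step: the closed isometric embedding into $\mathbb R^N$ with the continuity equation transferred via properness, the Euclidean superposition principle, lifting curves back to $X$ using rational times and closedness of $\iota(X)$, the Fubini argument on the Borel set of non-tangent velocities, and the foliated-chart argument showing that the absolutely continuous transverse coordinate is constant. The only cosmetic difference is that you obtain absolute continuity and the velocity of $\iota^{-1}\circ\tilde\omega$ from local smooth left inverses of $\iota$, whereas the paper uses the global retraction of a tubular neighborhood.
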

		
		\begin{proof}
			By the observation following Definition \ref{def:foliated-continuity-equation}, $(\mu_t,v_t)$ solves the usual continuity equation on $X$, that is,
			\begin{equation}\label{eq:CE-on-X}
				\int_0^1\int_X\bigl(\partial_t\varphi(t,x)+\langle\nabla\varphi(t,x),v_t(x)\rangle_g\bigr)\,d\mu_t(x)\,dt=0
			\end{equation}
			for every $\varphi\in C_c^\infty((0,1)\times X)$.
			
			\
			
			\noindent \emph{Step 1: transfer to a Euclidean space.} Since $X$ is complete, by \cite{Mul09} there exists a smooth isometric embedding $\iota:X\to\mathbb R^N$ whose image $\iota(X)$ is closed. In particular, $\iota$ is proper: if $K\subset\mathbb R^N$ is compact, then $K\cap\iota(X)$ is compact, and so is $\iota^{-1}(K)=\iota^{-1}(K\cap\iota(X))$, because $\iota$ is a homeomorphism onto its image. Set $\tilde\mu_t:=\iota_*\mu_t$ and define the Borel vector field
			\begin{displaymath}
				\begin{cases}
					\tilde v_t(z):=d\iota_{\iota^{-1}(z)}\bigl(v_t(\iota^{-1}(z))\bigr) & \text{if } z\in\iota(X) \\
					\tilde v_t(z):=0 &\text{otherwise}.
				\end{cases}			
			\end{displaymath}
			The curve $t\mapsto\tilde\mu_t$ is narrowly continuous, since $f\circ\iota$ is bounded and continuous on $X$ whenever $f$ is bounded and continuous on $\mathbb R^N$. Since $\iota$ is isometric, $|\tilde v_t(\iota(x))|=\|v_t(x)\|_g$, and therefore
			\begin{displaymath}
				\int_0^1\int_{\mathbb R^N}|\tilde v_t(z)|^2\,d\tilde\mu_t(z)\,dt=\int_0^1\int_X\|v_t(x)\|_g^2\,d\mu_t(x)\,dt<+\infty.
			\end{displaymath}
			Let $\psi\in C_c^\infty((0,1)\times\mathbb R^N)$, with support contained in $[a,b]\times K$ for some $0<a<b<1$ and some compact set $K\subset\mathbb R^N$, and set $\varphi(t,x):=\psi(t,\iota(x))$. Then $\varphi$ is smooth and its support is contained in $[a,b]\times\iota^{-1}(K)$, which is compact by properness of $\iota$. Thus $\varphi\in C_c^\infty((0,1)\times X)$. Moreover,
			\begin{displaymath}
				\partial_t\varphi(t,x)=\partial_t\psi(t,\iota(x))
			\end{displaymath}
			and
			\begin{displaymath}
				\langle\nabla\varphi(t,x),w\rangle_g=\langle\nabla\psi(t,\iota(x)),d\iota_x(w)\rangle
			\end{displaymath}
			for every $w\in T_xX$. Applying \eqref{eq:CE-on-X} to $\varphi$, we obtain
			\begin{displaymath}
				\int_0^1\int_{\mathbb R^N}\bigl(\partial_t\psi(t,z)+\langle\nabla\psi(t,z),\tilde v_t(z)\rangle\bigr)\,d\tilde\mu_t(z)\,dt=0,
			\end{displaymath}
			so $(\tilde\mu_t,\tilde v_t)$ solves the continuity equation in $\mathbb R^N$.
			
			\
			
			\noindent \emph{Step 2:} By \cite[Theorem 8.2.1]{AGS08}, after projecting the measure given there onto the space of curves, there exists $\tilde{\boldsymbol\eta}\in\mathscr P\bigl(C([0,1];\mathbb R^N)\bigr)$ such that $(e_t)_*\tilde{\boldsymbol\eta}=\tilde\mu_t$ for every $t\in[0,1]$, and $\tilde{\boldsymbol\eta}$ is concentrated on absolutely continuous curves $\tilde\omega$ satisfying $\dot{\tilde\omega}(t)=\tilde v_t(\tilde\omega(t))$ for almost every $t\in(0,1)$.
			
			\
			
			\noindent \emph{Step 3: the curves lie in $\iota(X)$ and lift to $X$.} For every $t\in[0,1]$,
			\begin{displaymath}
				\tilde{\boldsymbol\eta}\bigl(\{\tilde\omega:\tilde\omega(t)\notin\iota(X)\}\bigr)=\tilde\mu_t\bigl(\mathbb R^N\setminus\iota(X)\bigr)=0.
			\end{displaymath}
			Taking the union over $t\in\mathbb Q\cap[0,1]$, we see that, for $\tilde{\boldsymbol\eta}$-almost every $\tilde\omega$, one has $\tilde\omega(t)\in\iota(X)$ for every rational $t$, and hence for every $t\in[0,1]$, by continuity of $\tilde\omega$ and closedness of $\iota(X)$. Thus $\tilde{\boldsymbol\eta}$ is concentrated on the closed set $C([0,1];\iota(X))$, on which the map $\Phi(\tilde\omega):=\iota^{-1}\circ\tilde\omega\in C([0,1];X)$ is continuous. Define $\boldsymbol\eta:=\Phi_*\tilde{\boldsymbol\eta}$. Then $(e_t)_*\boldsymbol\eta=(\iota^{-1})_*\tilde\mu_t=\mu_t$ for every $t\in[0,1]$, which proves (i).
			
			\
			
			Let $V\subset\mathbb R^N$ be a tubular neighborhood of the embedded submanifold $\iota(X)$, and let $r:V\to\iota(X)$ be the associated smooth retraction. The map $F:=\iota^{-1}\circ r:V\to X$ is smooth and satisfies $F\circ\iota=\operatorname{id}_X$, so that $dF_{\iota(x)}\circ d\iota_x=\operatorname{id}_{T_xX}$ for every $x\in X$. Let $\tilde\omega$ be an absolutely continuous curve with values in $\iota(X)$ satisfying $\dot{\tilde\omega}(t)=\tilde v_t(\tilde\omega(t))$ for almost every $t$, and let $\omega:=\Phi(\tilde\omega)=F\circ\tilde\omega$. Since $\tilde\omega([0,1])$ is compact and $F$ is smooth on a neighborhood of it, $\omega$ is absolutely continuous and, by the chain rule, $\dot\omega(t)=dF\bigl(\dot{\tilde\omega}(t)\bigr)$ for almost every $t$. For almost every $t$ we also have $\dot{\tilde\omega}(t)=\tilde v_t(\tilde\omega(t))=d\iota\bigl(v_t(\omega(t))\bigr)$, and consequently $\dot\omega(t)=dF\circ d\iota\bigl(v_t(\omega(t))\bigr)=v_t(\omega(t))$. This proves (ii).
			
			\
			
			\noindent \emph{Step 4: tangency along the curves.} Since $v$ is Borel and the tangent distribution of $\mathcal F$ is closed in $TX$, the set
			\begin{displaymath}
				B:=\bigl\{(t,x)\in(0,1)\times X: v_t(x)\notin T_x\mathcal L_x\bigr\}
			\end{displaymath}
			is Borel. Set $B_t:=\{x\in X:(t,x)\in B\}$. By Definition \ref{def:foliated-continuity-equation}, we have $\int_0^1\mu_t(B_t)\,dt=0$, and hence, by Fubini's theorem and (i),
			\begin{displaymath}
				\int_{C([0,1];X)}\int_0^1\mathbf 1_B(t,\omega(t))\,dt\,d\boldsymbol\eta(\omega)=\int_0^1\mu_t(B_t)\,dt=0.
			\end{displaymath}
			Hence, for $\boldsymbol\eta$-almost every $\omega$, we have $v_t(\omega(t))\in T_{\omega(t)}\mathcal L_{\omega(t)}$ for almost every $t$ and therefore, by (ii), $\dot\omega(t)\in T_{\omega(t)}\mathcal L_{\omega(t)}$ for almost every $t$.
			
			\
			
			\noindent \emph{Step 5: such curves remain in a single leaf.} Let $\omega:[0,1]\to X$ be absolutely continuous with $\dot\omega(t)\in T_{\omega(t)}\mathcal L_{\omega(t)}$ for almost every $t$. Cover the compact set $\omega([0,1])$ by finitely many $C^1$ foliated charts $\phi_i:U_i\to D^k\times D^\ell$. By a Lebesgue number argument, there is a partition $0=t_0<t_1<\dots<t_J=1$ such that, for each $j$, $\omega([t_{j-1},t_j])\subset U_{i_j}$ for some index $i_j$. On $[t_{j-1},t_j]$, the transverse coordinate $z(t):=\operatorname{pr}_2\circ\phi_{i_j}\circ\omega(t)$ is absolutely continuous and, since $\ker d(\operatorname{pr}_2\circ\phi_{i_j})_x=T_x\mathcal L_x$ for every $x\in U_{i_j}$,
			\begin{displaymath}
				\dot z(t)=d(\operatorname{pr}_2\circ\phi_{i_j})\bigl(\dot\omega(t)\bigr)=0
			\end{displaymath}
			for almost every $t$ in $[t_{j-1},t_j]$. Hence $z$ is constant on this interval, and $\omega([t_{j-1},t_j])$ is contained in a single plaque of $U_{i_j}$, and thus in a single leaf. Since consecutive intervals share an endpoint, $\omega([0,1])\subset\mathcal L_{\omega(0)}$, and therefore $\pi(\omega(t))=\pi(\omega(0))$ for every $t\in[0,1]$. Together with Step 4, this proves (iii).
		\end{proof}
		
		\
		
		We emphasize that Lemma \ref{lemma:leafwise-superposition} represents the curve $(\mu_t)$ through trajectories driven by the given velocity field $v_t$, and not merely through some lift whose speed is controlled. This is essential in what follows: a lift of $(\mu_t)$ controlled only by its metric speed follows the velocity field of minimal norm, which need not be tangent to the leaves, and whose trajectories may therefore pass from one leaf to another even though $\pi_*\mu_t$ is conserved.
		
		The tangential constraint on the velocity field has an immediate	consequence for the evolution of the transverse distribution of mass. Since the velocity has no component normal to the leaves, transport trajectories cannot move from one leaf to another. Therefore, although the conditional distribution of mass within each leaf may evolve in time, the total amount of mass assigned to each leaf must remain unchanged. In terms of the map $\pi:X\to X^*$ of Definition \ref{Def-Borel-foliation}, this means that the transverse marginal $\pi_*\mu_t$ is constant along every solution of the foliated continuity equation. We make this conservation property precise in the following proposition.

		\
	
	\begin{proposition}	\label{proposition:transverse-conservation}
		Let $X$ be a complete Riemannian manifold endowed with a Borel foliation $\mathcal F$ of class $C^1$. Let $(\mu_t,v_t)$ satisfy the foliated continuity equation and
		assume that
		\begin{displaymath}
			\int_0^1\int_X
			\|v_t(x)\|_g^2\,d\mu_t(x)\,dt
			<
			+\infty.
		\end{displaymath}
		Then $\pi_*\mu_t=\pi_*\mu_0$ for every $t\in[0,1]$.
	\end{proposition}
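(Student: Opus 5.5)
The plan is to derive this directly from the leafwise superposition principle, Lemma \ref{lemma:leafwise-superposition}. Under the present hypotheses (complete $X$, $C^1$ Borel foliation, finite kinetic action), that lemma provides $\boldsymbol\eta\in\mathscr P\bigl(C([0,1];X)\bigr)$ with $(e_t)_*\boldsymbol\eta=\mu_t$ for every $t\in[0,1]$. Item (iii) of the lemma says that $\boldsymbol\eta$-almost every curve $\omega$ satisfies $\pi(\omega(t))=\pi(\omega(0))$ for all $t$. The conservation law will then follow by pushing $\boldsymbol\eta$ forward to the leaf space at two different times.

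Concretely, fix $t\in[0,1]$. Since $e_t$ is continuous and $\pi$ is Borel, the composition $\pi\circ e_t:C([0,1];X)\to X^*$ is Borel. We have
\begin{displaymath}
\pi_*\mu_t=\pi_*(e_t)_*\boldsymbol\eta=(\pi\circ e_t)_*\boldsymbol\eta .
\end{displaymath}
By (iii), the Borel maps $\pi\circ e_t$ and $\pi\circ e_0$ coincide $\boldsymbol\eta$-almost everywhere. Hence for every Borel set $B\subset X^*$ the sets $(\pi\circ e_t)^{-1}(B)$ and $(\pi\circ e_0)^{-1}(B)$ differ by an $\boldsymbol\eta$-null set. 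It follows that $(\pi\circ e_t)_*\boldsymbol\eta=(\pi\circ e_0)_*\boldsymbol\eta=\pi_*\mu_0$. This is the same argument as in the first half of the proof of Proposition \ref{Propositon_foliated_plans}, now with a bounded Borel test function $\varphi$ on $X^*$.

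The only point to check is measurability of the exceptional set in (iii). One does not need the set $\{\omega:\pi(\omega(t))\neq\pi(\omega(0))\}$ to be measurable a priori. It suffices that it is contained in an $\boldsymbol\eta$-null measurable set, which is what (iii) gives. In any case it is Borel, being the preimage of $E_{\mathcal F}^c$ under the continuous map $(e_t,e_0)$. I therefore expect no real obstacle: all the work (the superposition principle and the fact that tangent curves stay in one leaf) has already been done in Lemma \ref{lemma:leafwise-superposition}.
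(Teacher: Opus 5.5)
Your proposal is correct and follows essentially the same route as the paper. You apply Lemma \ref{lemma:leafwise-superposition} to obtain $\boldsymbol\eta$ with $(e_t)_*\boldsymbol\eta=\mu_t$ and $\pi(\omega(t))=\pi(\omega(0))$ for $\boldsymbol\eta$-almost every $\omega$, and then compare $(\pi\circ e_t)_*\boldsymbol\eta$ with $(\pi\circ e_0)_*\boldsymbol\eta$; the paper makes the same comparison using bounded Borel test functions on $X^*$ rather than Borel sets. Your extra observation that the exceptional set equals $(e_t,e_0)^{-1}(E_{\mathcal F}^c)$, and is therefore Borel, is a correct detail that the paper leaves implicit.
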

	
	\begin{proof}
		By Lemma \ref{lemma:leafwise-superposition}, there exists a probability measure $\boldsymbol{\eta}$ on $C([0,1];X)$ such that $(e_t)_*\boldsymbol{\eta}=\mu_t$ for every $t\in[0,1]$, where $e_t(\omega):=\omega(t)$, and such that $\pi(\omega(t))=\pi(\omega(0))$ for every $t\in[0,1]$ and for $\boldsymbol{\eta}$-almost every $\omega$.
		
		Let $f:X^*\to\mathbb R$ be bounded and Borel measurable. Then
		\begin{align*}
			\int_{X^*}f(y)\,d(\pi_*\mu_t)(y)
			&=
			\int_Xf(\pi(x))\,d\mu_t(x)\\
			&=
			\int_{C([0,1];X)}
			f(\pi(\omega(t)))\,d\boldsymbol{\eta}(\omega)\\
			&=
			\int_{C([0,1];X)}
			f(\pi(\omega(0)))\,d\boldsymbol{\eta}(\omega)\\
			&=
			\int_Xf(\pi(x))\,d\mu_0(x)\\
			&=
			\int_{X^*}f(y)\,d(\pi_*\mu_0)(y).
		\end{align*}
		Therefore, $\pi_*\mu_t=\pi_*\mu_0$ for every $t\in[0,1]$.
	\end{proof}
	
	\
	
	For a solution $(\mu_t,v_t)$ of the foliated continuity equation,
	we define its kinetic action by
	\begin{displaymath}
		\mathcal A_{\mathcal F}(\mu,v) := \int_0^1\int_X \|v_t(x)\|_g^2\,d\mu_t(x) ~dt.
	\end{displaymath}
	Given $\mu_0,\mu_1\in\mathscr P_2(X)$, denote by
	$\operatorname{CE}_{\mathcal F}(\mu_0,\mu_1)$ the set of all pairs $(\mu_t,v_t)$ satisfying the foliated continuity equation, having finite kinetic action and satisfying the endpoint conditions $\mu_{t=0}=\mu_0$, $\mu_{t=1}=\mu_1$. The foliated dynamical transport cost is defined by
	\begin{displaymath}
		\mathcal W_{2,\mathcal F}^{2}(\mu_0,\mu_1)
		:=
		\inf
		\left\{
		\mathcal A_{\mathcal F}(\mu,v):
		(\mu_t,v_t)\in
		\operatorname{CE}_{\mathcal F}(\mu_0,\mu_1)
		\right\}.
	\end{displaymath}
	As usual, the infimum over the empty set is understood to be
	$+\infty$. By Proposition \ref{proposition:transverse-conservation}, a
	necessary condition for $\operatorname{CE}_{\mathcal F}(\mu_0,\mu_1)$ to be nonempty is $\pi_*\mu_0=\pi_*\mu_1$. This is precisely the compatibility condition obtained for the existence of a foliated transport plan.

	In the converse direction to Lemma \ref{lemma:leafwise-superposition}, a probability measure on leafwise curves with finite energy induces an admissible solution of the foliated continuity equation, whose velocity field is obtained by averaging the velocities of the curves passing through each point.
	
	\
	
	\begin{lemma} \label{lemma:eulerian-representation}
		Let $X$ be a complete Riemannian manifold endowed with a Borel foliation $\mathcal F$ of class $C^1$. Let $\boldsymbol\eta$ be a probability measure on $C([0,1];X)$ concentrated on absolutely continuous curves, each of which is contained in a single leaf, and assume that
		\begin{displaymath}
			\mathcal E(\boldsymbol\eta):=\int_{C([0,1];X)}\int_0^1\|\dot\omega(t)\|_g^2\,dt\,d\boldsymbol\eta(\omega)<+\infty.
		\end{displaymath}
		Set $\mu_t:=(e_t)_*\boldsymbol\eta$ for $t\in[0,1]$, and assume that $\mu_0\in\mathscr P_2(X)$. Then the following hold.
		\begin{enumerate}[leftmargin=*]
			\item[(i)] The curve $t\mapsto\mu_t$ is narrowly continuous, and $\mu_t\in\mathscr P_2(X)$ for every $t\in[0,1]$.
			\item[(ii)] There exists a Borel vector field $v:(0,1)\times X\to TX$ such that $(\mu_t,v_t)$ satisfies the foliated continuity equation.
			\item[(iii)] $\displaystyle\int_0^1\int_X\|v_t(x)\|_g^2\,d\mu_t(x)\,dt\leq\mathcal E(\boldsymbol\eta)$.
		\end{enumerate}
	\end{lemma}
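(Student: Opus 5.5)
We follow the standard construction of the Eulerian velocity from a Lagrangian superposition, as in \cite[Proposition 8.1.8 and Theorem 8.2.1]{AGS08}, and check that leafwise tangency survives the averaging.

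\emph{Step 1: item (i).} Narrow continuity follows from dominated convergence. For bounded continuous $f$, the map $t\mapsto\int f(\omega(t))\,d\boldsymbol\eta(\omega)$ is continuous, since each $\omega$ is continuous and $f$ is bounded. For the second moments, fix $\tilde x\in X$. For every $\omega$ and every $t$,
\begin{displaymath}
d(\omega(t),\tilde x)\le d(\omega(0),\tilde x)+\int_0^1\|\dot\omega\|_g\,ds\le d(\omega(0),\tilde x)+\Bigl(\int_0^1\|\dot\omega\|_g^2\,ds\Bigr)^{1/2}.
\end{displaymath}
Squaring, using $(a+b)^2\le 2a^2+2b^2$ and integrating against $\boldsymbol\eta$ gives
\begin{displaymath}
\int d^2(x,\tilde x)\,d\mu_t\le 2\int d^2(x,\tilde x)\,d\mu_0+2\mathcal E(\boldsymbol\eta)<+\infty .
\end{displaymath}

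\emph{Step 2: defining $v$.} Set $\Omega_{AC}$ to be the Borel set of absolutely continuous curves. Put $\tilde{\boldsymbol\eta}:=\boldsymbol\eta\otimes dt$ on $C([0,1];X)\times(0,1)$, and let $\Psi(\omega,t):=(t,\omega(t))$, so that $\Psi_*\tilde{\boldsymbol\eta}=dt\otimes\mu_t$. The map $(\omega,t)\mapsto\dot\omega(t)\in TX$ is Borel on the set where it exists; to see this, write it as a limit of difference quotients in charts, or via the embedding $\iota$ of \cite{Mul09} as in Lemma \ref{lemma:leafwise-superposition}. Disintegrate $\tilde{\boldsymbol\eta}$ with respect to $\Psi$ using Theorem \ref{theorem:disintegration}, obtaining conditional measures $\tilde{\boldsymbol\eta}_{t,x}$. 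Then define
\begin{displaymath}
v_t(x):=\int\dot\omega(t)\,d\tilde{\boldsymbol\eta}_{t,x}(\omega,t).
\end{displaymath}
This integral is an average of vectors in the single space $T_xX$, because $\omega(t)=x$ on the support of $\tilde{\boldsymbol\eta}_{t,x}$. Equivalently, $v\,(dt\otimes\mu_t)$ is the Radon--Nikodym derivative of the vector measure $\Psi_*(\dot\omega(t)\,\tilde{\boldsymbol\eta})$. Set $v:=0$ where the integral is undefined.

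\emph{Step 3: tangency, and items (ii) and (iii).} This step is the main obstacle. By hypothesis each $\omega$ lies in a single leaf, so by the discussion after the definition of $d_{\mathcal L}$ it is absolutely continuous in the leaf. Hence $\dot\omega(t)\in T_{\omega(t)}\mathcal L_{\omega(t)}$ for a.e.\ $t$. The set $\{(x,w):w\in T_x\mathcal L_x\}$ is closed in $TX$, and each of its fibres is a linear subspace. Therefore the barycenter $v_t(x)$ of vectors in $T_x\mathcal L_x$ stays in $T_x\mathcal L_x$ for $dt\otimes\mu_t$-a.e.\ $(t,x)$.

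For (ii), take $\varphi\in C_c^\infty((0,1)\times X)$. For each $\omega\in\Omega_{AC}$ we have
\begin{displaymath}
0=\int_0^1\frac{d}{dt}\varphi(t,\omega(t))\,dt=\int_0^1\bigl(\partial_t\varphi+\langle\nabla\varphi,\dot\omega\rangle\bigr)\,dt .
\end{displaymath}
Integrate this against $\boldsymbol\eta$; Fubini is justified because $\nabla\varphi$ is bounded and $\dot\omega\in L^1(\tilde{\boldsymbol\eta})$ by finiteness of $\mathcal E$. Then disintegrate to replace $\dot\omega(t)$ by $v_t(x)$. Finally, tangency lets us replace $\nabla\varphi$ by $\nabla_{\mathcal F}\varphi$, which gives the foliated continuity equation.

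For (iii), apply Jensen's inequality to the convex function $\|\cdot\|_g^2$ on $T_xX$:
\begin{displaymath}
\|v_t(x)\|_g^2\le\int\|\dot\omega(t)\|_g^2\,d\tilde{\boldsymbol\eta}_{t,x}.
\end{displaymath}
Integrating against $dt\otimes\mu_t$ yields $\int_0^1\int_X\|v_t\|_g^2\,d\mu_t\,dt\le\mathcal E(\boldsymbol\eta)$.

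The delicate points are all measure-theoretic. One must check that $(\omega,t)\mapsto\dot\omega(t)$ is Borel. One must also make sense of averaging in varying fibres of $TX$. The simplest route is to push everything into $\mathbb R^N$ via $\iota$: average $d\iota(\dot\omega)$ in $\mathbb R^N$, use that $d\iota_x(T_x\mathcal L_x)$ is a linear subspace so the average stays in it, and pull back with $dF$ as in Step 3 of Lemma \ref{lemma:leafwise-superposition}.
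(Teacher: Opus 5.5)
Your proposal is correct and follows essentially the same route as the paper: the same moment estimate for (i), disintegration of $\mathscr L^1\otimes\boldsymbol\eta$ along $\Psi(t,\omega)=(t,\omega(t))$, and the barycentric velocity computed in $\mathbb R^N$ through the closed isometric embedding $\iota$. Tangency holds because a barycenter of a measure on the linear subspace $d\iota_x(T_x\mathcal L_x)$ stays in it, (iii) follows from Jensen, and (ii) from the chain rule with Fubini; the only cosmetic difference is that you pull back with $dF$ rather than $(d\iota_x)^{-1}\circ P_x$, and these agree on $d\iota_x(T_xX)$, where the barycenter lies.
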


	\
	
	\begin{proof}
		
		\
		
		\
		
		\noindent \emph{Proof of (i).} If $f:X\to\mathbb R$ is bounded and continuous, then 
		$$t\mapsto\int_Xf\,d\mu_t=\int f(\omega(t))\,d\boldsymbol\eta(\omega)$$
		is continuous by dominated convergence, since every $\omega$ is continuous. Fix $x_0\in X$. For every absolutely continuous curve $\omega$ and every $t\in[0,1]$, the Cauchy-Schwarz inequality gives
		\begin{displaymath}
			d(\omega(t),x_0)\leq d(\omega(0),x_0)+\int_0^1\|\dot\omega\|_g\,ds\leq d(\omega(0),x_0)+\Bigl(\int_0^1\|\dot\omega\|_g^2\,ds\Bigr)^{1/2}.
		\end{displaymath}
		Squaring and integrating with respect to $\boldsymbol\eta$, we obtain 
		$$\int_Xd^2(x,x_0)\,d\mu_t(x)\leq2\int_Xd^2(x,x_0)\,d\mu_0(x)+2\,\mathcal E(\boldsymbol\eta)<+\infty.$$
		
		\
		
		\noindent \emph{Construction of $v$.} Let $\iota:X\to\mathbb R^N$ be a closed isometric embedding, as in the proof of Lemma \ref{lemma:leafwise-superposition}, and let $P_x:\mathbb R^N\to d\iota_x(T_xX)$ denote the orthogonal projection, which depends continuously on $x$. Let $D$ be the set of pairs $(t,\omega)\in(0,1)\times C([0,1];X)$ such that $\iota\circ\omega$ is differentiable at $t$. Writing derivatives as limits of difference quotients along rational increments, one checks that $D$ is Borel and that
		\begin{displaymath}
			\begin{cases}
					w(t,\omega):=(\iota\circ\omega)'(t) &\text{ if }(t,\omega)\in D \\
					w(t,\omega):=0 & \text{ otherwise}
			\end{cases}
		\end{displaymath}
		is a Borel map with values in $\mathbb R^N$. Consider the probability measure 
		$$\lambda:=\mathscr L^1|_{(0,1)}\otimes\boldsymbol\eta$$
		on $(0,1)\times C([0,1];X)$ and the continuous map $\Psi(t,\omega):=(t,\omega(t))$. Then $\Psi_*\lambda=\mu_t\,dt$, that is, 
		$$\int\varphi\,d(\Psi_*\lambda)=\int_0^1\int_X\varphi(t,x)\,d\mu_t(x)\,dt$$
		for every bounded Borel $\varphi$. For $\boldsymbol\eta$-almost every $\omega$ and almost every $t$, we have $(t,\omega)\in D$ and $w(t,\omega)=d\iota_{\omega(t)}(\dot\omega(t))$, so that $|w(t,\omega)|=\|\dot\omega(t)\|_g$ because $\iota$ is isometric. Moreover, since $\omega$ is an absolutely continuous curve contained in a single leaf, $\dot\omega(t)$ is its velocity as a curve in that leaf (see the discussion following the definition of $d_{\mathcal L}$ in Section \ref{Section:Kantorovich}), and hence $\dot\omega(t)\in T_{\omega(t)}\mathcal L_{\omega(t)}$. Consequently, for $\lambda$-almost every $(t,\omega)$,
		\begin{equation}\label{eq:w-tangent}
			w(t,\omega)\in V_{\omega(t)}:=d\iota_{\omega(t)}\bigl(T_{\omega(t)}\mathcal L_{\omega(t)}\bigr),
			\qquad
			\int|w|^2\,d\lambda=\mathcal E(\boldsymbol\eta)<+\infty.
		\end{equation}
		
		Since $(0,1)\times C([0,1];X)$ and $(0,1)\times X$ are Polish spaces, Theorem \ref{theorem:disintegration} gives a disintegration $\{\lambda_{t,x}\}$ of $\lambda$ with respect to $\Psi$: the measure $\lambda_{t,x}$ is concentrated on $\Psi^{-1}(\{(t,x)\})$ for $\mu_t\,dt$-almost every $(t,x)$, and
		$$\int F\,d\lambda=\int_0^1\int_X\int F\,d\lambda_{t,x}\,d\mu_t(x)\,dt$$
		for every nonnegative Borel $F$. By \eqref{eq:w-tangent}, for $\mu_t\,dt$-almost every $(t,x)$ we have $\int|w|^2\,d\lambda_{t,x}<+\infty$ and $w\in V_x$ for $\lambda_{t,x}$-almost every point. Define
		\begin{displaymath}
			\begin{cases}
				\tilde v(t,x):=\int w\,d\lambda_{t,x} & \text{ if }\int|w|\,d\lambda_{t,x}<+\infty \\
				\tilde v(t,x):=0 & \text{ otherwise},
			\end{cases}
		\end{displaymath}
		which is a Borel map, and $v_t(x):=(d\iota_x)^{-1}\bigl(P_x\tilde v(t,x)\bigr)\in T_xX$.
		
		\
		
		\noindent \emph{Proof of (iii).} For $\mu_t\,dt$-almost every $(t,x)$, $\tilde v(t,x)$ is the barycenter of a probability measure concentrated on the linear subspace $V_x\subset d\iota_x(T_xX)$, hence $\tilde v(t,x)\in V_x$. Therefore $P_x\tilde v(t,x)=\tilde v(t,x)$ and $v_t(x)\in T_x\mathcal L_x$ for $\mu_t\,dt$-almost every $(t,x)$. By Jensen's inequality, 
		$$\|v_t(x)\|_g^2=|\tilde v(t,x)|^2\leq\int|w|^2\,d\lambda_{t,x}$$
		for $\mu_t\,dt$-almost every $(t,x)$, and integrating with respect to $\mu_t\,dt$ gives (iii).
		
		\
		
		\noindent \emph{Proof of (ii).} Let $\varphi\in C_c^\infty((0,1)\times X)$. For $\mu_t\,dt$-almost every $(t,x)$ we have $\tilde v(t,x)\in d\iota_x(T_xX)$, and hence
		\begin{displaymath}
			\langle\nabla\varphi(t,x),v_t(x)\rangle_g=\bigl\langle d\iota_x\nabla\varphi(t,x),\tilde v(t,x)\bigr\rangle=\int\bigl\langle d\iota_x\nabla\varphi(t,x),w\bigr\rangle\,d\lambda_{t,x}.
		\end{displaymath}
		Since $\partial_t\varphi$ and $\|\nabla\varphi\|_g$ are bounded and $w\in L^1(\lambda)$, and since $\lambda_{t,x}$ is concentrated on $\Psi^{-1}(\{(t,x)\})$, it follows that
		\begin{align*}
			&\int_0^1\int_X\bigl(\partial_t\varphi+\langle\nabla\varphi,v_t\rangle_g\bigr)\,d\mu_t\,dt\\
			&\qquad=\int\Bigl(\partial_t\varphi(t,\omega(t))+\bigl\langle d\iota_{\omega(t)}\nabla\varphi(t,\omega(t)),w(t,\omega)\bigr\rangle\Bigr)\,d\lambda(t,\omega)\\
			&\qquad=\int_{C([0,1];X)}\int_0^1\Bigl(\partial_t\varphi(t,\omega(t))+\bigl\langle\nabla\varphi(t,\omega(t)),\dot\omega(t)\bigr\rangle_g\Bigr)\,dt\,d\boldsymbol\eta(\omega)\\
			&\qquad=\int_{C([0,1];X)}\int_0^1\frac{d}{dt}\varphi(t,\omega(t))\,dt\,d\boldsymbol\eta(\omega)=0,
		\end{align*}
		where we used the chain rule for the absolutely continuous curve $t\mapsto\varphi(t,\omega(t))$, which vanishes near $t=0$ and $t=1$. Together with the tangency of $v$ and the observation following Definition \ref{def:foliated-continuity-equation}, this proves (ii).
	\end{proof}
	
	\
	
	To relate the dynamical cost to the static foliated transport
	cost, we must also pass in the opposite direction: starting from
	a foliated transport plan, we need to construct an admissible
	evolution by joining each pair of transported points through a
	minimizing geodesic contained in their common leaf. Here, a constant-speed minimizing geodesic joining $x$ and $x'$ in their common leaf is an absolutely continuous curve $\alpha:[0,1]\to X$ with image in that leaf, $\alpha(0)=x$, $\alpha(1)=x'$, and $\|\dot\alpha(t)\|_g=d_{\mathcal L}(x,x')$ for almost every $t$. When $\mathcal F$ is smooth, the leaves are smooth Riemannian manifolds and these curves are the usual minimizing geodesics. In order to integrate these geodesics against a transport plan, they must be chosen measurably with respect to their endpoints. The following lemma shows that this is always possible on a complete manifold.
	
	\
	
	\begin{lemma} \label{lemma:borel-geodesic-selection}
		Let $X$ be a complete Riemannian manifold endowed with a Borel foliation $\mathcal F$ of class $C^1$. Then, there exists $G:E_{\mathcal F}\to C([0,1];X)$ Borel map, such that, for every $(x,x')\in E_{\mathcal F}$, the curve $G(x,x')$ is a constant-speed minimizing geodesic joining $x$ and $x'$ in their common leaf.
	\end{lemma}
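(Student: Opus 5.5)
The plan is to realize the set of admissible geodesics as a closed set in a Polish space, show that its fibers over $E_{\mathcal F}$ are nonempty and compact, and then apply a measurable selection theorem for compact-valued maps. Consider the set
\begin{displaymath}
	\mathcal G:=\bigl\{(x,x',\omega)\in E_{\mathcal F}\times C([0,1];X):\ \omega(0)=x,\ \omega(1)=x',\ \omega([0,1])\subset\mathcal L_x,\ \omega \text{ is } d_{\mathcal L}(x,x')\text{-Lipschitz for } d_{\mathcal L}\bigr\}.
\end{displaymath}
Equivalently, $\omega$ lies in a single leaf and $d_{\mathcal L}(\omega(s),\omega(t))\le|t-s|\,d_{\mathcal L}(x,x')$ for all $s,t\in\mathbb Q\cap[0,1]$. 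Such an $\omega$ is Lipschitz with respect to $d$, hence absolutely continuous. Its leafwise length is at most $d_{\mathcal L}(x,x')$, so equality holds and the speed is a.e.\ constant equal to $d_{\mathcal L}(x,x')$. So the fibers $\mathcal G_{(x,x')}$ consist exactly of the constant-speed minimizing geodesics in the sense just defined.

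\emph{Nonemptiness and compactness of fibers.} Fix $(x,x')\in E_{\mathcal F}$ and $r=d_{\mathcal L}(x,x')<\infty$. Take leafwise curves $c_n$ of length at most $r+1/n$, parametrized proportionally to arc length. They lie in the compact ball $\bar B(x,r+1)$ (Hopf--Rinow), so Arzel\`a--Ascoli gives a uniformly convergent subsequence. The chart argument from the proof of Lemma~\ref{lemma:dL-Borel-lsc} shows that the limit lies in a single leaf, and lower semicontinuity of $\operatorname{Length}$ shows that it has length at most $r$. This gives existence. The same argument applied to a sequence in $\mathcal G_{(x,x')}$ gives compactness of the fiber.

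\emph{Measurability of the graph, the main step.} I would show that $\mathcal G$ is Borel in $X\times X\times C([0,1];X)$. The conditions $\omega(0)=x$ and $\omega(1)=x'$ are closed. The remaining conditions are countably many inequalities $d_{\mathcal L}(\omega(s),\omega(t))\le |t-s|\,d_{\mathcal L}(\omega(0),\omega(1))$, each involving the Borel function $d_{\mathcal L}$ of Lemma~\ref{lemma:dL-Borel-lsc}(i) composed with continuous evaluations, so each is Borel. The rational-time inequalities force every $\omega(t)$ with $t$ rational to lie in $\mathcal L_x$. To conclude that the whole curve lies in one leaf, I would instead add the condition $\operatorname{Length}(\omega)\le d_{\mathcal L}(x,x')$, which is Borel because $\operatorname{Length}$ is lower semicontinuous, together with the condition that $\omega$ lies in a single leaf. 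Following Lemma~\ref{lemma:dL-Borel-lsc}, the latter should be Borel via the compact exhaustion: the set of curves with image in $K_j$ contained in a single leaf and of length at most $m$ is closed, by the Arzel\`a--Ascoli/plaque argument used there, and one takes a countable union over $j,m$. With completeness one can even argue that $\mathcal G$ is closed, by combining this with the lower semicontinuity of $d_{\mathcal L}$ and of $\operatorname{Length}$ from Lemma~\ref{lemma:dL-Borel-lsc}(ii); but Borel is enough for the next step.

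\emph{Selection.} Once $\mathcal G$ is known to be Borel with nonempty compact fibers, a Borel uniformization theorem for Borel sets with compact ($K_\sigma$) sections, namely the Arsenin--Kunugui theorem \cite[Theorem 35.46]{Kec95}, provides a Borel map $G:E_{\mathcal F}\to C([0,1];X)$ whose graph is contained in $\mathcal G$. Here $E_{\mathcal F}$ is Borel, as shown in Section~\ref{Section:Kantorovich}. If instead one proves that $\mathcal G$ is closed, it suffices to use the Kuratowski--Ryll-Nardzewski theorem: the map $(x,x')\mapsto\mathcal G_{(x,x')}$ is then upper semicontinuous with compact values on each set $\{d_{\mathcal L}\le r\}\cap\{d(x,x_0)\le R\}$, hence Borel, and it admits a Borel selector.

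The step I expect to be the main obstacle is the Borel or closedness description of the condition ``the image lies in a single leaf''. The leaves are only injectively immersed and the leaf relation is merely Borel, so I would rely on the plaque-constancy argument from Lemma~\ref{lemma:dL-Borel-lsc} rather than on $\pi$.
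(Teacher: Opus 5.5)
Your proposal is correct and follows essentially the paper's route: a Borel graph over $E_{\mathcal F}$ whose sections are exactly the constant-speed leafwise minimizing geodesics, nonempty compact sections via Hopf--Rinow and Arzel\`a--Ascoli, and a Borel uniformization theorem for Borel sets with compact sections (the paper cites \cite[Theorem 28.8]{Kec95}). The step you flag as the main obstacle is simpler than you expect. The plaque argument shows directly that the set of continuous curves with image in a single leaf is closed in $C([0,1];X)$, with no length bound and no compact exhaustion, and the paper imposes the geodesic condition as $\operatorname{Length}_{s,t}(\omega)\le(t-s)\,d_{\mathcal L}(x,x')$ for rational $s<t$.

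Your aside that $\mathcal G$ is closed when $X$ is complete is false, although your main argument does not use it. To pass $\operatorname{Length}(\omega_n)\le d_{\mathcal L}(x_n,x_n')$ to the limit you would need $\limsup_n d_{\mathcal L}(x_n,x_n')\le d_{\mathcal L}(x,x')$. Lower semicontinuity of $d_{\mathcal L}$ gives only the opposite inequality.

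Here is a counterexample. Take the flat cylinder $(\mathbb R/2\pi\mathbb Z)\times\mathbb R$ foliated by the integral curves of $\partial_\theta-y^2\partial_y$. This is a Borel foliation: its leaves in $\{y>0\}$ are the lines $\{\theta-1/y\equiv c \pmod{2\pi}\}$, similarly in $\{y<0\}$, and these spiral onto the compact leaf $\{y=0\}$. Let $y_n\to0^+$, and set $x_n=(0,y_n)$ and $x_n'=(0,y_n')$ with $1/y_n'=1/y_n+2\pi$. These points lie on a common leaf at leafwise distance at least $2\pi$. Their constant-speed geodesics converge uniformly to the loop around $\{y=0\}$ based at $(0,0)$. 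That loop is not the constant geodesic from $(0,0)$ to itself, so $\mathcal G$ is not closed.

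Consequently, the upper semicontinuity and Kuratowski--Ryll-Nardzewski alternative fails as stated. Keep the Borel-graph route.
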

	
	\begin{proof}
		Let $\mathcal C\subset C([0,1];X)$ be the set of continuous curves with image in a single leaf. We first show that $\mathcal C$ is closed. Let $\omega_n\in\mathcal C$ converge uniformly to $\omega$. Cover $[0,1]$ by finitely many closed intervals $J$, consecutive ones overlapping, such that each $\omega(J)$ is contained in the domain $U$ of a foliated chart $\phi$. Fix such an interval $J$. For $n$ large, $\omega_n(J)\subset U$, and then the transverse coordinate $\operatorname{pr}_2\circ\phi\circ\omega_n$ is constant on $J$. Hence so is its uniform limit $\operatorname{pr}_2\circ\phi\circ\omega$, and $\omega(J)$ lies in a single plaque. Since consecutive intervals overlap, $\omega\in\mathcal C$.
		
		\
		
		\noindent For $0\leq s<t\leq1$ and $\omega\in C([0,1];X)$, let $\operatorname{Length}_{s,t}(\omega)$ denote the length of the restriction $\omega|_{[s,t]}$, defined as in Section \ref{Section:Kantorovich}. Each $\operatorname{Length}_{s,t}$ is a supremum of continuous functions, hence lower semicontinuous on $C([0,1];X)$. Let $\Gamma$ be the set of triples $(x,x',\omega)\in E_{\mathcal F}\times C([0,1];X)$ such that $\omega(0)=x$, $\omega(1)=x'$, $\omega\in\mathcal C$, and
		\begin{displaymath}
			\operatorname{Length}_{s,t}(\omega)\leq(t-s)\,d_{\mathcal L}(x,x')
		\end{displaymath}
		for all rational $0\leq s<t\leq1$. Since $E_{\mathcal F}$ is Borel and $d_{\mathcal L}$ is Borel by Lemma \ref{lemma:dL-Borel-lsc}, the set $\Gamma$ is a Borel subset of $E_{\mathcal F}\times C([0,1];X)$. Fix $(x,x')\in E_{\mathcal F}$ and set $\ell:=d_{\mathcal L}(x,x')<+\infty$. We claim that the section $\Gamma_{(x,x')}$ is the set of constant-speed minimizing geodesics joining $x$ and $x'$ in their common leaf, and that it is nonempty and compact.
		
		\
		
		\noindent Let $\omega\in\Gamma_{(x,x')}$. Given real numbers $0\leq s<t\leq1$ and rationals $s'\leq s$ and $t'\geq t$, we have $d(\omega(s),\omega(t))\leq\operatorname{Length}_{s,t}(\omega)\leq\operatorname{Length}_{s',t'}(\omega)\leq(t'-s')\ell$. Letting $s'\to s$ and $t'\to t$ shows that $\omega$ is $\ell$-Lipschitz. Hence $\omega$ is absolutely continuous and $\|\dot\omega(t)\|_g\leq\ell$ for almost every $t$. Since $\omega$ is an absolutely continuous curve from $x$ to $x'$ with image in a leaf, the definition of $d_{\mathcal L}$ gives $\ell\leq\int_0^1\|\dot\omega(t)\|_g\,dt\leq\ell$. Therefore $\|\dot\omega(t)\|_g=\ell$ for almost every $t$, that is, $\omega$ is a constant-speed minimizing geodesic joining $x$ and $x'$ in their common leaf. Conversely, such a geodesic $\alpha$ belongs to $\mathcal C$ and satisfies
		$$\operatorname{Length}_{s,t}(\alpha)=\int_s^t\|\dot\alpha(r)\|_g\,dr=(t-s)\ell,$$
		so $\alpha\in\Gamma_{(x,x')}$. The section $\Gamma_{(x,x')}$ is closed in $C([0,1];X)$, since $\mathcal C$ is closed and each $\operatorname{Length}_{s,t}$ is lower semicontinuous. Its elements are $\ell$-Lipschitz curves starting at $x$, so they take values in the closed ball of radius $\ell$ centered at $x$, which is compact by the Hopf-Rinow theorem. By the Arzel\`a-Ascoli theorem, $\Gamma_{(x,x')}$ is compact.
		
		\
		
		\noindent To see that $\Gamma_{(x,x')}$ is nonempty, choose $\alpha_n:[0,1]\to X$ absolutely continuous curves  from $x$ to $x'$, each with image in a single leaf and of length at most $\ell+1/n$, parametrized proportionally to arc length. They are $(\ell+1)$-Lipschitz and take values in the compact closed ball of radius $\ell+1$ centered at $x$. By the Arzel\`a-Ascoli theorem, a subsequence converges uniformly to a curve $\alpha$. Then $\alpha\in\mathcal C$, $\alpha(0)=x$, $\alpha(1)=x'$, and, by lower semicontinuity,
		\begin{displaymath}
			\operatorname{Length}_{s,t}(\alpha)\leq\liminf_{n\to\infty}\operatorname{Length}_{s,t}(\alpha_n)=\liminf_{n\to\infty}(t-s)\operatorname{Length}(\alpha_n)\leq(t-s)\ell
		\end{displaymath}
		for all $0\leq s<t\leq1$. Thus $\alpha\in\Gamma_{(x,x')}$. Since $E_{\mathcal F}$ is a standard Borel space, $C([0,1];X)$ is Polish, and $\Gamma$ is a Borel set whose sections are nonempty and compact, $\Gamma$ admits a Borel uniformization by \cite[Theorem 28.8]{Kec95}. That is, there exists a Borel map $G:E_{\mathcal F}\to C([0,1];X)$ with $(x,x',G(x,x'))\in\Gamma$ for every $(x,x')\in E_{\mathcal F}$. By the claim, $G$ has the required properties.
	\end{proof}
	
	\
	
	This Borel selection allows a foliated transport plan $\gamma$ to be lifted to a probability measure $G_*\gamma$ on the space of leafwise trajectories. Combining this construction with the superposition principle yields the equivalence between the static and dynamical formulations, as stated in the following foliated Benamou-Brenier formula, which is Theorem \ref{maintheorem:B} from the introduction.
	
	\
	
	\begin{theorem}	\label{theorem:foliated-benamou-brenier}
		Let $X$ be a complete Riemannian manifold endowed with a
		Borel foliation $\mathcal F$ of class $C^1$. Then, for every probability measure $\mu_0$ and $\mu_1\in\mathscr P_2(X)$, the dynamical and static foliated transport costs coincide, that is, $\mathcal W_{2,\mathcal F}^{2}(\mu_0,\mu_1) = W_{2,\mathcal F}^{2}(\mu_0,\mu_1)$.
	\end{theorem}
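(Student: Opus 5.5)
We prove the two inequalities separately, using the representation results of this section as the bridge between curves of measures and measures on leafwise curves.

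\emph{Inequality $W_{2,\mathcal F}^2\le\mathcal W_{2,\mathcal F}^2$.} If $\operatorname{CE}_{\mathcal F}(\mu_0,\mu_1)=\varnothing$ there is nothing to prove. Otherwise, fix $(\mu_t,v_t)\in\operatorname{CE}_{\mathcal F}(\mu_0,\mu_1)$. Lemma \ref{lemma:leafwise-superposition} gives $\boldsymbol\eta\in\mathscr P(C([0,1];X))$ with $(e_t)_*\boldsymbol\eta=\mu_t$, concentrated on absolutely continuous curves with $\dot\omega(t)=v_t(\omega(t))$ and $\omega([0,1])\subset\mathcal L_{\omega(0)}$. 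Set $\gamma:=(e_0,e_1)_*\boldsymbol\eta$. Its marginals are $\mu_0$ and $\mu_1$. Since $\pi(\omega(1))=\pi(\omega(0))$ for $\boldsymbol\eta$-a.e.\ $\omega$, we get $\gamma(E_{\mathcal F})=1$, so $\gamma\in\Pi_{\mathcal F}(\mu_0,\mu_1)$. Each such $\omega$ is an admissible competitor in the definition of $d_{\mathcal L}(\omega(0),\omega(1))$, so Cauchy--Schwarz gives
\[
d_{\mathcal L}^2(\omega(0),\omega(1))\le\Bigl(\int_0^1\|\dot\omega\|_g\,dt\Bigr)^2\le\int_0^1\|v_t(\omega(t))\|_g^2\,dt .
\]
Integrating against $\boldsymbol\eta$ and applying Fubini together with $(e_t)_*\boldsymbol\eta=\mu_t$, we obtain $\int d_{\mathcal L}^2\,d\gamma\le\mathcal A_{\mathcal F}(\mu,v)$. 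Taking the infimum over admissible pairs yields the inequality.

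\emph{Inequality $\mathcal W_{2,\mathcal F}^2\le W_{2,\mathcal F}^2$.} We may assume $W_{2,\mathcal F}^2(\mu_0,\mu_1)<+\infty$. Corollary \ref{corollary:existence-optimal-plans} then provides an optimal $\gamma^\star\in\Pi_{\mathcal F}(\mu_0,\mu_1)$, concentrated on the Borel set $E_{\mathcal F}$. Lemma \ref{lemma:borel-geodesic-selection} gives a Borel map $G$ on $E_{\mathcal F}$, and we set $\boldsymbol\eta:=G_*\gamma^\star$. This measure is concentrated on absolutely continuous curves contained in single leaves, with $\|\dot\omega\|_g=d_{\mathcal L}(x,x')$ a.e.\ along $\omega=G(x,x')$. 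Hence
\[
\mathcal E(\boldsymbol\eta)=\int d_{\mathcal L}^2\,d\gamma^\star=W_{2,\mathcal F}^2(\mu_0,\mu_1)<+\infty .
\]
Moreover $(e_0)_*\boldsymbol\eta=\mu_0\in\mathscr P_2(X)$ and $(e_1)_*\boldsymbol\eta=\mu_1$. Lemma \ref{lemma:eulerian-representation} now gives a narrowly continuous curve $\mu_t=(e_t)_*\boldsymbol\eta$ in $\mathscr P_2(X)$, and a Borel field $v$ such that $(\mu_t,v_t)$ solves the foliated continuity equation with action at most $\mathcal E(\boldsymbol\eta)$. Since the endpoints match and the action is finite, $(\mu_t,v_t)\in\operatorname{CE}_{\mathcal F}(\mu_0,\mu_1)$. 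Therefore $\mathcal W_{2,\mathcal F}^2\le W_{2,\mathcal F}^2$.

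Most of the technical weight has already been absorbed by Lemmas \ref{lemma:leafwise-superposition}, \ref{lemma:eulerian-representation} and \ref{lemma:borel-geodesic-selection}. The remaining step that needs care is measurability in the first inequality. The map $(t,\omega)\mapsto\|v_t(\omega(t))\|_g^2$ must be jointly Borel, which holds because $v$ is Borel and the evaluation map $(t,\omega)\mapsto\omega(t)$ is continuous. The second point is that the inequality $d_{\mathcal L}\le\int\|\dot\omega\|_g$ requires $\omega$ to lie in one leaf as a curve in $X$, and this is exactly item (iii) of Lemma \ref{lemma:leafwise-superposition}.
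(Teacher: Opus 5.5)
Your proof is correct and follows the paper's argument essentially step by step. For $W_{2,\mathcal F}^2\le\mathcal W_{2,\mathcal F}^2$ you use leafwise superposition plus Cauchy--Schwarz, and for the reverse inequality you push a foliated plan forward by the Borel geodesic selection $G$ and then apply the Eulerian representation lemma. The only difference is that you take an exact optimal plan from Corollary \ref{corollary:existence-optimal-plans}, whereas the paper uses an $\varepsilon$-optimal plan $\gamma_\varepsilon$ and lets $\varepsilon\to 0$; your choice is legitimate since $X$ is complete, while the paper's version simply avoids needing a minimizer.
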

	
	\begin{proof}
		We first prove that $W_{2,\mathcal F}^{2}(\mu_0,\mu_1) \leq \mathcal W_{2,\mathcal F}^{2}(\mu_0,\mu_1)$.
		
		\
		
		\noindent Let $(\mu_t,v_t) \in	\operatorname{CE}_{\mathcal F}(\mu_0,\mu_1)$. By Lemma \ref{lemma:leafwise-superposition}, there exists $\boldsymbol{\eta} \in \mathscr P\bigl(C([0,1];X)\bigr)$ such that $(e_t)_*\boldsymbol{\eta}=\mu_t$ for every $t\in[0,1]$, and $\boldsymbol{\eta}$ is concentrated on absolutely continuous curves satisfying $\dot\omega(t)=v_t(\omega(t))$ for almost every $t$, each of which is contained in a single leaf. Define $\gamma:=(e_0,e_1)_*\boldsymbol{\eta}$. The marginals of $\gamma$ are $\mu_0$ and $\mu_1$, so that $\gamma\in\Pi(\mu_0,\mu_1)$. By Lemma \ref{lemma:leafwise-superposition}(iii), $\pi(\omega(0))=\pi(\omega(1))$ for $\boldsymbol{\eta}$-almost every $\omega$, and hence $\gamma(E_{\mathcal F})=1$. Thus, $\gamma\in\Pi_{\mathcal F}(\mu_0,\mu_1)$. Moreover, since $\boldsymbol{\eta}$-almost every $\omega$ is an absolutely continuous curve contained in a single leaf, the definition of $d_{\mathcal L}$ gives
		\begin{displaymath}
			d_{\mathcal L}(\omega(0),\omega(1))
			\leq
			\int_0^1\|\dot\omega(t)\|_g\,dt.
		\end{displaymath}
		By the Cauchy-Schwarz inequality,
		\begin{displaymath}
			d_{\mathcal L}^2(\omega(0),\omega(1))
			\leq
			\int_0^1\|\dot\omega(t)\|_g^2\,dt.
		\end{displaymath}
		Integrating with respect to $\boldsymbol{\eta}$ gives
		\begin{align*}
			\int_{X\times X}d_{\mathcal L}^2(x,x')\,d\gamma(x,x')
			&=
			\int_{C([0,1];X)}
			d_{\mathcal L}^2(\omega(0),\omega(1))
			\,d\boldsymbol{\eta}(\omega)\\
			&\leq
			\int_{C([0,1];X)}
			\int_0^1\|\dot\omega(t)\|_g^2\,dt\,
			d\boldsymbol{\eta}(\omega)\\
			&=
			\int_0^1\int_X
			\|v_t(x)\|_g^2\,d\mu_t(x)\,dt\\
			&=
			\mathcal A_{\mathcal F}(\mu,v).
		\end{align*}
		Therefore, $W_{2,\mathcal F}^{2}(\mu_0,\mu_1) \leq \mathcal A_{\mathcal F}(\mu,v)$. Taking the infimum over all admissible pairs yields
		\begin{equation}
			W_{2,\mathcal F}^{2}(\mu_0,\mu_1)
			\leq
			\mathcal W_{2,\mathcal F}^{2}(\mu_0,\mu_1).
			\label{eq:static-below-dynamic}
		\end{equation}
		
		\
		
		\noindent We now prove the reverse inequality. If $W_{2,\mathcal F}^{2}(\mu_0,\mu_1)=+\infty$, then inequality \eqref{eq:static-below-dynamic} implies $\mathcal W_{2,\mathcal F}^{2}(\mu_0,\mu_1)=+\infty$, and there is nothing to prove. Suppose that $W_{2,\mathcal F}^{2}(\mu_0,\mu_1)<+\infty$. Fix $\varepsilon>0$ and choose
		$\gamma_\varepsilon\in\Pi_{\mathcal F}(\mu_0,\mu_1)$ such that
		\begin{displaymath}
			\int_{X\times X}
			d_{\mathcal L}^{2}(x,x')\,d\gamma_\varepsilon(x,x')
			\leq
			W_{2,\mathcal F}^{2}(\mu_0,\mu_1)+\varepsilon.
		\end{displaymath}
		
		\
		
		\noindent Using the Borel selection $G$ of leafwise minimizing
		geodesics given by Lemma \ref{lemma:borel-geodesic-selection}, define $\boldsymbol{\eta}_\varepsilon := G_*\gamma_\varepsilon \in	\mathscr P\bigl(C([0,1];X)\bigr)$ and $\mu_t^\varepsilon := (e_t)_*\boldsymbol{\eta}_\varepsilon$. Since $G(x,x')$ is contained in the common leaf of $x$ and $x'$, the measure $\boldsymbol{\eta}_\varepsilon$ is concentrated on leafwise curves. Moreover, $\mu_0^\varepsilon=\mu_0$, $\mu_1^\varepsilon=\mu_1$.
		
		\
		
		\noindent Every curve $G(x,x')$ is a constant-speed minimizing geodesic inside the common leaf of $x$ and $x'$, so that $\|\frac{d}{dt}G(x,x')(t)\|_g=d_{\mathcal L}(x,x')$ for almost every $t$. Hence,
		\begin{displaymath}
			\int_0^1
			\left\|
			\frac{d}{dt}G(x,x')(t)
			\right\|_g^2\,dt
			=
			d_{\mathcal L}^{2}(x,x'),
		\end{displaymath}
		and integrating with respect to $\gamma_\varepsilon$ gives
		\begin{displaymath}
			\int_{C([0,1];X)}\int_0^1\|\dot\omega(t)\|_g^2\,dt\,d\boldsymbol{\eta}_\varepsilon(\omega)
			=
			\int_{X\times X}d_{\mathcal L}^{2}(x,x')\,d\gamma_\varepsilon(x,x')
			<+\infty.
		\end{displaymath}
		Since $\mu_0\in\mathscr P_2(X)$, Lemma \ref{lemma:eulerian-representation} applied to $\boldsymbol{\eta}_\varepsilon$ provides a Borel vector field $v^\varepsilon$ such that $(\mu_t^\varepsilon,v_t^\varepsilon)$ satisfies the foliated continuity equation, $\mu_t^\varepsilon\in\mathscr P_2(X)$ for every $t$, and
		\begin{displaymath}
			\mathcal A_{\mathcal F}(\mu^\varepsilon,v^\varepsilon)
			\leq
			\int_{C([0,1];X)}\int_0^1\|\dot\omega(t)\|_g^2\,dt\,d\boldsymbol{\eta}_\varepsilon(\omega).
		\end{displaymath}
		In particular, $(\mu_t^\varepsilon,v_t^\varepsilon)\in\operatorname{CE}_{\mathcal F}(\mu_0,\mu_1)$. It follows that
		\begin{align*}
			\mathcal W_{2,\mathcal F}^{2}(\mu_0,\mu_1)
			&\leq
			\mathcal A_{\mathcal F}
			(\mu^\varepsilon,v^\varepsilon)\\
			&\leq
			\int_{X\times X}
			d_{\mathcal L}^{2}(x,x')\,d\gamma_\varepsilon(x,x')\\
			&\leq
			W_{2,\mathcal F}^{2}(\mu_0,\mu_1)+\varepsilon.
		\end{align*}
		Since $\varepsilon>0$ is arbitrary, we conclude that
		\begin{equation}
			\mathcal W_{2,\mathcal F}^{2}(\mu_0,\mu_1)
			\leq
			W_{2,\mathcal F}^{2}(\mu_0,\mu_1).
			\label{eq:dynamic-below-static}
		\end{equation}
		
		\
		
		\noindent Combining \eqref{eq:static-below-dynamic} and
		\eqref{eq:dynamic-below-static}, we obtain $\mathcal W_{2,\mathcal F}^{2}(\mu_0,\mu_1) = W_{2,\mathcal F}^{2}(\mu_0,\mu_1)$.
	\end{proof}
	
	\

		As anticipated in the introduction, the foliation restricts which points may be coupled in the static formulation, and which directions of motion are admissible in the dynamical one. The following elementary example illustrates this equivalence explicitly.
		
		\
		
		\begin{example}
			Let $X=\mathbb T^2=\mathbb S^1\times\mathbb S^1$ be the flat two-dimensional torus, where $\mathbb S^1=\mathbb R/\mathbb Z$, endowed with the foliation whose leaves are the horizontal circles $\mathcal L_y=\mathbb S^1\times\{y\}$, $y\in\mathbb S^1$. The corresponding quotient map is $\pi:\mathbb T^2\to\mathbb S^1$, $\pi(x,y)=y$. Let $\sigma\in\mathscr P(\mathbb S^1)$ and fix
			$a\in[-1/2,1/2]$. Consider the probability measures $\mu_0:=\delta_0\otimes\sigma$, $\mu_1:=\delta_a\otimes\sigma$. Thus, on every horizontal leaf, the initial mass is concentrated
			at $(0,y)$ and the final mass is concentrated at $(a,y)$. Clearly, $\pi_*\mu_0=\pi_*\mu_1=\sigma$, so the transverse compatibility condition is satisfied.
			
			\
			
			\noindent The disintegrations of $\mu_0$ and $\mu_1$ with respect to $\pi$ are given by $\mu_0^y=\delta_{(0,y)}$, $\mu_1^y=\delta_{(a,y)}$ for $\sigma$-almost every $y\in\mathbb S^1$. Since the intrinsic
			distance on each horizontal leaf is the usual distance on
			$\mathbb S^1$, we have 
			$$W_{2,\mathcal L_y}^2(\mu_0^y,\mu_1^y) =	d_{\mathbb S^1}^2(0,a) =|a|^2.$$
			Therefore, the leafwise decomposition formula gives
			\begin{align*}
				W_{2,\mathcal F}^2(\mu_0,\mu_1)
				&=
				\int_{\mathbb S^1}
				W_{2,\mathcal L_y}^2(\mu_0^y,\mu_1^y)\,d\sigma(y)\\
				&=
				\int_{\mathbb S^1}|a|^2\,d\sigma(y)\\
				&=
				|a|^2.
			\end{align*}
			
			\
			
			\noindent We now recover the same value from the dynamical formulation. Define $\mu_t:=\delta_{ta}\otimes\sigma$, $t\in[0,1]$, where $ta$ is understood modulo $\mathbb Z$, and consider $v_t(x,y):=a\,\partial_x$. The vector field $v_t$ is tangent to the horizontal leaves, since $T_{(x,y)}\mathcal L_y =	\operatorname{span}\{\partial_x\}$.	Moreover, the pair $(\mu_t,v_t)$ satisfies the foliated continuity equation and connects $\mu_0$ to $\mu_1$. Its kinetic action is
			\begin{align*}
				\mathcal A_{\mathcal F}(\mu,v)
				&=
				\int_0^1\int_{\mathbb T^2}
				\|v_t(x,y)\|_g^2\,d\mu_t(x,y)\,dt\\
				&=
				\int_0^1\int_{\mathbb S^1}|a|^2\,d\sigma(y)\,dt\\
				&=
				|a|^2.
			\end{align*}
			Consequently, $\mathcal W_{2,\mathcal F}^2(\mu_0,\mu_1) \leq |a|^2$. By Theorem \ref{theorem:foliated-benamou-brenier}, the reverse inequality also holds, and
			\begin{displaymath}
				\mathcal W_{2,\mathcal F}^2(\mu_0,\mu_1)=W_{2,\mathcal F}^2(\mu_0,\mu_1)=|a|^2.
			\end{displaymath}
			
			\
			
			\noindent This example makes the transverse conservation law explicit: the measure $\sigma$ describing the distribution of mass among the horizontal leaves remains fixed, while the conditional mass on every leaf moves horizontally from $0$ to $a$ with constant speed.
		\end{example}
		
		\
		
		More generally, the foliated Benamou-Brenier formula allows the transport on different leaves to follow distinct dynamics, provided that these dynamics depend measurably on the transverse parameter. The foliation therefore separates the evolution into independent leafwise motions while the common transverse marginal keeps these motions assembled into a global probability measure on $X$. This provides the dynamical counterpart of the leafwise decomposition established in the static formulation.

	\
	
	\section*{Acknowledgements}
	\noindent This work was financed in part by the Coordena\c{c}\~ao de Aperfei\c{c}oamento de Pessoal de N\'ivel Superior - Brasil (CAPES) - Finance Code 001. This work was financed in part by Universidade Estadual de Campinas (UNICAMP) through funds provided under Resolution GR-03/2026. 
	
	\
	
	\noindent D. S. L. has been partially supported by S\~{a}o Paulo Research Foundation (FAPESP) grant \#2022/03379-0. C. S. R. has been partially supported by S\~{a}o Paulo Research Foundation (FAPESP) grant \#2020/04426-6. The opinions, hypotheses and conclusions or recommendations expressed in this work are the responsibility of the authors and do not necessarily reflect the views of FAPESP. C. S. R. would like to acknowledge support from the Max Planck Society, Germany, through the award of a Max Planck Partner Group for Geometry and Probability in Dynamical Systems. 
	
	\

\end{document}